\documentclass[11pt, a4paper]{amsart}
\usepackage{a4wide}
\usepackage{graphicx,enumitem}
\usepackage{amssymb}
\usepackage{color}


\usepackage{ulem}
\normalem 
\usepackage{latexsym}
\usepackage{hyperref}

\allowdisplaybreaks
\numberwithin{equation}{section}

\def\eps{\varepsilon}

\newcommand{\R}{\mathbb{R}}
\newcommand{\IR}{\R}
\newcommand{\N}{\mathbb{N}}
\newcommand{\IN}{\N}
\newcommand{\IC}{\mathbb{C}}

\newcommand{\cA}{\mathcal{A}}
\newcommand{\cB}{\mathcal{B}}

\newcommand{\cD}{\mathcal{D}} 
\newcommand{\cE}{\mathcal{E}}
\newcommand{\cF}{\mathcal{F}}

\newcommand{\cL}{\mathcal{L}}

\newcommand{\cS}{\mathcal{S}}

\newcommand{\rD}{{\mathrm{D}}}
%


\DeclareMathOperator{\spann}{\mathrm{span}}

\newcommand{\be}{\begin{equation}}
\newcommand{\ee}{\end{equation}}

\newcommand{\dd}{{\mathrm d}}
\newcommand{\matr}[1]{{\bf #1}}

\newtheorem{lemma}{Lemma}[section]
\newtheorem{proposition}[lemma]{Proposition}

\newtheorem{theorem}[lemma]{Theorem}
\newtheorem{remark}[lemma]{Remark}
\newtheorem{example}[lemma]{Example}

\newtheorem{assumption}[lemma]{Assumption}
\newtheorem{corollary}[lemma]{Corollary}
\theoremstyle{remark}

\theoremstyle{definition}
\newtheorem{definition}[lemma]{Definition}
%

\definecolor{darkred}{rgb}{.6,0,0}

\definecolor{darkgreen}{rgb}{0,.6,0}

%



\newcommand{\HH}{\mathrm{H}}

\newcommand{\II}{{\mathrm I}}


\begin{document}
\title[Compressive Space-Time Galerkin for Parabolic PDEs]{
Compressive Space-Time Galerkin Discretizations of Parabolic Partial
Differential Equations}

\author[S.~Larsson]{Stig Larsson} \address[Stig Larsson]{\newline
  Department of Mathematical Sciences
\newline Chalmers University of Technology \newline 
and University of Gothenburg\newline
SE--412 96 Gothenburg, Sweden.} \email[]{stig@chalmers.se}

\author[Ch.~Schwab]{Christoph Schwab} \address[Christoph Schwab]{
\newline 
Seminar f\"ur Angewandte Mathematik 
\newline
ETH Z\"urich \newline R\"amistrasse 101, CH--8092 Z\"urich, Switzerland.} 
\email[]{schwab@math.ethz.ch}

\thanks{Acknowledgement. 
The work of Christoph Schwab was supported in part by ERC AdG no.~247277. 
}
\date{\today}

\keywords{Fractional Calculus, Parabolic Problems, Wavelets, Adaptivity,
          Space-Time Discretization, Compressive Galerkin}

\subjclass{65N30}

\begin{abstract}
We study linear parabolic initial-value problems in a space-time
variational formulation based on fractional calculus. 
This formulation 
uses ``time derivatives of order one half'' on the bi-infinite 
time axis.  
We show that for linear, parabolic initial-boundary 
value problems on $(0,\infty)$,
the corresponding bilinear form admits an inf-sup condition 
with sparse tensor product trial and test function spaces.
We deduce optimality of compressive, space-time Galerkin discretizations,
where stability of Galerkin approximations
is implied by the well-posedness of the parabolic operator equation.
The variational setting adopted here admits more general
Riesz bases than previous work; in particular, 
\emph{no stability in negative order Sobolev spaces 
on the spatial or temporal domains} 
is required of the Riesz bases accommodated
by the present formulation.
The trial and test spaces are based on Sobolev spaces of equal order 
$1/2$ with respect to the temporal variable.
Sparse tensor products of multi-level decompositions of
the spatial and temporal spaces in Galerkin discretizations
lead to large, non-symmetric linear systems of equations.
We prove that their condition numbers
are uniformly bounded with respect to the discretization level.
In terms of the total number of degrees of freedom, 
the convergence orders equal, up to logarithmic terms, 
those of best $N$-term approximations
of solutions of the corresponding elliptic problems.
\end{abstract}

\newpage
\maketitle
\newpage
\tableofcontents
\setcounter{page}{0}
\newpage
\section{Introduction}
\label{sec:intro}
For a bounded linear and self-adjoint operator $A\in \cL(V,V^*)$ in an
evolution triplet $V\subset H \simeq H^* \subset V^*$, and a bounded
domain $D\subset \IR^n$, we consider the initial boundary value
problem for abstract, linear parabolic evolution equations
\be\label{eq:ParProb}
Bu := \partial_t u + Au = f \quad\mbox{in}\quad \IR_> = (0,\infty)\;,
\ee
with homogeneous initial condition
\be\label{eq:ParIC} u(0) = 0 \;.  
\ee 
In \eqref{eq:ParProb}, we think of $A$ as linear,
strongly elliptic (pseudo)differential operator of order $2m > 0$, 
and of $V$ as a closed subspace of $H^m(D)$ supporting homogeneous, 
essential boundary conditions of the initial boundary value problem 
\eqref{eq:ParProb}, \eqref{eq:ParIC}.

Optimality of adaptive variational space-time Galerkin discretizations
of \eqref{eq:ParProb}, \eqref{eq:ParIC} on $(0,T)$ for $T<\infty$ were
shown for the first time in \cite{ScSt09}. There, well-posedness of
suitable space-time variational saddle-point formulations of the
parabolic initial boundary value problems \eqref{eq:ParProb},
\eqref{eq:ParIC} were established.  
By means of tensorized Riesz bases
of the Bochner spaces which underlie the space-time variational
formulations, the parabolic initial boundary value problems were
converted to equivalent bi-infinite matrix problems. 
These matrix
problems were subsequently solved numerically, in optimal complexity,
by means of adaptive wavelet discretizations from \cite{CDD2}.  
We note that {\em adaptive wavelet techniques} from \cite{CDD2} were
essential in the algorithms in \cite{ScSt09}, since it used
the paradigm ``stability by adaptivity'' from \cite{CDD2}.  In
particular in \cite{ScSt09}, no stability result for nonadaptive
discretizations could be obtained, but rather followed from the
well-posedness of the infinite-dimensional problem, the Riesz basis
property and certain optimality properties of the adaptive Galerkin
discretizations (``stability by adaptivity'').

In the present paper, building on fractional calculus techniques
pioneered in variational formulations of parabolic initial boundary
value problems by M.~Fontes \cite{FDiss,F00}, we propose a space-time
variational formulation based on bilinear forms, which are,
unlike the formulations considered in \cite{ScSt09}, {\em
  ``symmetric''} in the sense that trial and test spaces, which arise
in the variational formulation, are Sobolev spaces of equal orders
with respect to time differentiation.  Stability (in the sense that a
discrete inf-sup condition holds) of our space-time Galerkin
discretization requires that the finite-dimensional trial and test
spaces are different.  

The presently considered space-time variational formulation admits a
unique variational solution in a Bochner space $X$, which is
intermediate to the solution spaces which are obtained by the
``classical'' approach.  Moreover, as shown by M.~Fontes in
\cite{F00,Fon09}, the presently considered solutions can be obtained
by monotone operator methods and, therefore, Galerkin approximations
are well-defined and stable with any closed subspaces, including in
particular sparse tensor products of multilevel hierarchies in
space and time.
It is interesting to note that time derivatives of order
$1/2$ were used already in \cite{BaiocchiBrezzi,LangWolf13} 
in order to prove error estimates in the
$X$-norm for finite element approximations of
\eqref{eq:ParProb}--\eqref{eq:ParIC}.

As in \cite{ScSt09}, we establish in the present paper
quasi-optimality of linear and nonlinear space-time adaptive and
compressive Galerkin discretizations in the space-time cylinder.  To
this end, we show a discrete inf-sup condition in the present paper,
for a suitable sparse tensor space-time Petrov-Galerkin
discretization.  The use of wavelet-type Riesz bases in space and time
then results in uniformly bounded condition numbers of the
finite-dimensional problems; notably, this holds without the Riesz
basis property in $V^*$ of the spatial wavelet basis $\Sigma$, which
was essential in \cite{ScSt09}.  In the
presently considered variational formulation, we consider in
particular long-term evolution, i.e., the time interval $(0,T)$ with
$T = \infty$, and analyze space-time compressive and adaptive
numerical approximation of long-time integration for these problems.
Unlike \cite{Andreev13,ScSt09}, we obtain stability, multilevel
preconditioning and space-time compressibility even without
adaptivity, and with trial and test spaces of equal dimension (albeit
being possibly different so that we consider a
Petrov-Galerkin formulation as in \cite{Andreev13}).
Moreover, the optimality results in Section~\ref{sec:Adap} entail
optimal, adaptive and space-time compressive methods for long-time
integration (i.e., $T=\infty$) for parabolic evolution problems.

The outline of this paper is as follows: in Section~\ref{sec:Prels},
we present basic definitions and facts from functional analysis and
fractional calculus.  In Section~\ref{sec:PDEs}, we present the
space-time variational formulation of \eqref{eq:ParProb},
\eqref{eq:ParIC}.  Section~\ref{sec:WavGalDisc}, we consider
compressive space-time Galerkin discretization with sparse tensor
subspaces.  Section~\ref{sec:Adap} addresses the space-time {\em
  adaptive} discretization of the variational formulation in Section
\ref{sec:PDEs} and establishes optimality.  The analysis in
Sections~\ref{sec:PDEs}--\ref{sec:Adap} is developed for long-time
integration, i.e., for $T=\infty$.
\section{Preliminaries}
\label{sec:Prels}
\subsection{Functional analysis}
\label{sec:FA}
We require some tools from functional analysis.  Throughout this paper
all vector spaces are real unless explicitly stated
otherwise. Consider two Banach spaces $X$ and $Y$ and a bilinear form
$\cB\colon X\times Y\rightarrow \IR$, which is bounded, i.e., there
exists a constant $C$ such that
\be\label{eq:BilCont}
| \cB(w,v) | \leq C \| w \|_X \| v \|_Y 
\quad \forall w\in X\;,\;v\in Y \;.
\ee
We are interested in solving the linear, variational problem:
for each $F\in Y^*$, find a unique $u\in X$ such that
\be\label{eq:VarProb}
\cB(u,v) = F(v) \quad \forall v\in Y\;.
\ee
The form $\cB(\cdot,\cdot)$ induces in a one-to-one fashion a bounded,
linear operator $B\in \cL(X,Y^*)$ via
$$
_{Y^*}\langle Bw, v \rangle_Y = \cB(w,v) \quad \forall w\in X\;,\;v\in Y\;,
$$
so that the unique solvability of \eqref{eq:VarProb} is related to 
the question of bounded invertibility of the operator $B\in \cL(X,Y^*)$.
There holds:
\begin{proposition}
\label{prop:infsup}
Let $X,Y$ be Banach spaces; $Y$ reflexive. 
Let $\cB\colon
X\times Y\to\IR$ be a bounded bilinear form and consider the inf-sup
condition:
\be\label{eq:infsup}
\inf_{0\ne w\in X} \sup_{0\ne v \in Y} 
\frac{\cB(w,v)}{\| w \|_X \| v\|_Y}  \geq \gamma > 0
\;,
\ee
and the (adjoint) injectivity condition:
\be\label{eq:injec}
\sup_{w\in X} \cB(w,v) > 0 \quad \forall 0\ne v\in Y\;.
\ee
The conditions \eqref{eq:infsup}--\eqref{eq:injec} hold if and only if
for each $F\in Y^*$, the variational problem \eqref{eq:VarProb} admits
a unique solution $u\in X$ and in this case there holds the estimate
\begin{align*}
\| u \|_X \leq \frac{1}{\gamma} \| F \|_{Y^*} \;.
\end{align*}
In other words, \eqref{eq:infsup}--\eqref{eq:injec} hold if and
only if the corresponding operator $B\in \cL(X,Y^*)$ is boundedly
invertible, in which case $\| B^{-1} \|_{\cL(Y^*,X)} \leq \gamma^{-1}$.
\end{proposition}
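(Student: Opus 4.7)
The plan is to establish the classical Banach-Ne\v{c}as-Babu\v{s}ka equivalence by relating \eqref{eq:infsup} to injectivity of $B$ together with closedness of its range, and \eqref{eq:injec} to triviality of the annihilator of $\mathrm{range}(B)$; together, when $Y$ is reflexive, these yield bijectivity of $B\in\cL(X,Y^*)$. Throughout I use the dual characterization $\|Bw\|_{Y^*} = \sup_{0\ne v\in Y} \cB(w,v)/\|v\|_Y$, which turns \eqref{eq:infsup} into the a priori bound $\|Bw\|_{Y^*} \ge \gamma \|w\|_X$ for all $w\in X$.

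First I would deduce from this a priori bound that $B$ is injective and that $B^{-1}$ is bounded by $\gamma^{-1}$ on $\mathrm{range}(B)$. A routine Cauchy-sequence argument (if $Bw_n \to g$ in $Y^*$ then $\|w_n - w_m\|_X \le \gamma^{-1}\|Bw_n - Bw_m\|_{Y^*}\to 0$, so $w_n \to w$ in $X$ and by continuity $Bw = g$) shows that $\mathrm{range}(B)$ is closed in $Y^*$. To upgrade closedness to surjectivity I would argue by contradiction: if $\mathrm{range}(B)\subsetneq Y^*$ were a proper closed subspace, Hahn-Banach would produce a nonzero $\Phi\in Y^{**}$ annihilating it; reflexivity of $Y$ provides $0\ne v\in Y$ with $\Phi(g) = \langle g, v\rangle_{Y^*\times Y}$ for all $g\in Y^*$, whence $\cB(w,v) = \langle Bw,v\rangle = \Phi(Bw) = 0$ for every $w\in X$, contradicting \eqref{eq:injec}. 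Hence $B$ is a bijection, $\|B^{-1}\|_{\cL(Y^*,X)}\le \gamma^{-1}$, and the solution estimate $\|u\|_X\le \gamma^{-1}\|F\|_{Y^*}$ follows from $u = B^{-1}F$.

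For the converse, bounded invertibility of $B$ with $\|B^{-1}\|\le \gamma^{-1}$ gives $\|w\|_X = \|B^{-1}Bw\|_X \le \gamma^{-1}\|Bw\|_{Y^*}$, which is \eqref{eq:infsup}, while surjectivity of $B$ combined with the Hahn-Banach separation of points of $Y$ by elements of $Y^*$ yields \eqref{eq:injec}: indeed, if $\cB(w,v) = 0$ for all $w\in X$, then $\langle g,v\rangle = 0$ for every $g\in Y^*$, forcing $v = 0$. The step of substance is the surjectivity argument in the forward direction: the clean passage from closed range and \eqref{eq:injec} to $\mathrm{range}(B) = Y^*$ is exactly where reflexivity of $Y$ enters, since otherwise the annihilating functional $\Phi$ need not be representable by an element of $Y$ and \eqref{eq:injec} as stated would be too weak to produce the required contradiction.
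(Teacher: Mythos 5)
The paper states Proposition~\ref{prop:infsup} without proof, citing it as a classical result (the Banach--Ne\v{c}as--Babu\v{s}ka theorem) already used in \cite{ScSt09}, so there is no in-paper argument to compare against. Your reconstruction is the standard and correct proof: you translate \eqref{eq:infsup} into the lower bound $\|Bw\|_{Y^*}\ge\gamma\|w\|_X$, deduce injectivity and closed range via the Cauchy argument (using continuity of $B$, which comes from boundedness of $\cB$), and then rule out a proper closed range via Hahn--Banach together with reflexivity of $Y$, which is precisely where \eqref{eq:injec} enters; the converse is the routine reversal. One small stylistic point: in the converse direction it is cleaner to say that bounded invertibility yields \eqref{eq:infsup} \emph{with} $\gamma=\|B^{-1}\|_{\cL(Y^*,X)}^{-1}$, rather than presupposing $\|B^{-1}\|\le\gamma^{-1}$ as an additional hypothesis, but this is a matter of phrasing and does not affect the validity of the argument.
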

The Proposition~\ref{prop:infsup} was used in \cite{ScSt09} in
verifying that space-time saddle point formulations of
\eqref{eq:ParProb} are well-posed.  
Below, we shall be interested in
the following special case where $X=Y$.
\begin{corollary}\label{coro:X=Y}
  Assume that $X$ is a reflexive Banach space, and that the bounded
  bilinear form $\cB\colon X\times X\to\IR$ is {\em
    coercive-equivalent}, i.e., there exists an isomorphism $S\in
  \cL(X,X)$ such that $\cB(\cdot,S\cdot)$ is coercive,
  i.e., there exists $c>0$ such that
\be\label{eq:TAScoer}
\cB(w,Sw)\ge c\|w\|_X^2
\quad \forall w\in X \;.
\ee
Then the corresponding operator $B\in \cL(X,X^*)$ is boundedly
invertible.
\end{corollary}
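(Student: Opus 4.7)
The plan is to reduce the corollary to Proposition~\ref{prop:infsup} with $Y=X$, which is permissible since $X$ is assumed reflexive. This amounts to extracting the inf-sup condition \eqref{eq:infsup} and the adjoint injectivity \eqref{eq:injec} from the coercive-equivalence hypothesis \eqref{eq:TAScoer}. The isomorphism $S$ plays the role of a ``test function supplier'' turning the $X$-coercivity of $\cB(\cdot,S\cdot)$ into an inf-sup statement for $\cB$ itself.

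First I would verify \eqref{eq:infsup}. For any $0\ne w\in X$, I plug in the specific test vector $v=Sw$, which is nonzero because $S$ is an isomorphism, and bound $\|Sw\|_X\le \|S\|_{\cL(X,X)}\|w\|_X$. Then
\begin{equation*}
\sup_{0\ne v\in X}\frac{\cB(w,v)}{\|v\|_X}
\ge \frac{\cB(w,Sw)}{\|Sw\|_X}
\ge \frac{c\,\|w\|_X^2}{\|S\|_{\cL(X,X)}\|w\|_X}
= \frac{c}{\|S\|_{\cL(X,X)}}\|w\|_X,
\end{equation*}
so the inf-sup constant $\gamma := c/\|S\|_{\cL(X,X)} > 0$ works.

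Next I would check \eqref{eq:injec}. Given $0\ne v\in X$, set $w:=S^{-1}v$, which is well-defined and nonzero because $S$ is an isomorphism. Then $Sw=v$, hence $\cB(w,v)=\cB(w,Sw)\ge c\|w\|_X^2>0$, and in particular $\sup_{w\in X}\cB(w,v)>0$. With both \eqref{eq:infsup} and \eqref{eq:injec} established, Proposition~\ref{prop:infsup} yields that $B\in\cL(X,X^*)$ is boundedly invertible with $\|B^{-1}\|_{\cL(X^*,X)}\le \|S\|_{\cL(X,X)}/c$.

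No real obstacle arises here; the only point requiring a moment's care is using the \emph{isomorphism} (not merely bounded) property of $S$ in both directions: $S$ being bounded gives the denominator estimate in the inf-sup step, and $S$ being surjective (equivalently, $S^{-1}$ existing) supplies the nonzero preimage needed for injectivity. Reflexivity of $X$ is invoked only to apply Proposition~\ref{prop:infsup} with $Y=X$.
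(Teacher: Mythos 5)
Your proposal is correct and follows essentially the same route as the paper: reduce to Proposition~\ref{prop:infsup} with $Y=X$, obtain \eqref{eq:infsup} by testing with $v=Sw$ and bounding $\|Sw\|_X$ by the operator norm of $S$, and obtain \eqref{eq:injec} by testing with $w=S^{-1}v$. The only cosmetic difference is that the paper pushes the injectivity calculation one step further to the quantitative bound $\cB(S^{-1}v,v)\ge (c/c_S^2)\|v\|_X^2$, whereas you correctly observe that strict positivity is all that \eqref{eq:injec} requires.
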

\begin{proof} 
We assume \eqref{eq:TAScoer} and verify conditions
\eqref{eq:infsup}--\eqref{eq:injec} in Proposition
\ref{prop:infsup}.  For $0\ne w\in X$, we have $\| Sw \|_X \leq c_S
\| w \|_X$ and $Sw\ne 0$, since $S$ is an isomorphism. 
Together with \eqref{eq:TAScoer} this leads to
\begin{align*}
\sup_{0\not=v\in X}\frac{\cB(w,v)}{\|v\|_X}
\ge \frac{\cB(w,Sw) }{\|Sw\|_X}
\geq c \frac{\| w \|_X^2}{\|Sw\|_X} \geq \frac{c}{c_S} \| w \|_X \;.
\end{align*}
This proves \eqref{eq:infsup}.  
To verify \eqref{eq:injec} we compute
\begin{align*}
  \sup_{w\in X}\cB(w,v)\ge \cB(S^{-1}v,v)=\cB(S^{-1}v,S(S^{-1}v))\ge c\|S^{-1}v\|_X^2
\geq \frac{c}{c_S^2} \|v\|_X^2>0
\end{align*}
for $0\not=v\in X$.
\end{proof}
\subsection{The elliptic operator}
\label{sec:FuncSpc}
We let $(H, \langle \cdot,\cdot \rangle_H)$ and $(V, \langle
\cdot,\cdot \rangle_V)$ denote two separable Hilbert spaces with dense
embedding $V\subset H$ and duals $H^*$ and $V^*$.  We identify
$H\simeq H^*$ according to the Riesz representation theorem and obtain
the Gel'fand triple
\begin{align*}
V\subset H \simeq H^* \subset V^*\;,
\end{align*}
again with dense injections.  Let $A\in \cL(V,V^*)$ be a bounded
self-adjoint linear operator such that the corresponding bilinear form
$a(v,w)={_{V^*}\langle Av,w\rangle_{V}}$ is coercive and bounded on
$V\times V$, i.e., for some $0<\lambda_- \leq \lambda_+ < \infty$,
\be\label{eq:AssA}
a(v,v)\geq \lambda_- \| v \|_V^2
\;,\quad 
|a(v,w)|\leq \lambda_+ \|v\|_V \| w\|_V \;.
\ee
\begin{example}\label{ex:Expl1}
In a bounded Lipschitz domain $D\subset \IR^n$ 
of dimension $n\geq 1$, 
we consider the linear, second order divergence form operator
given for $v\in C_0^\infty(D)$ by
\begin{align*}
Av = -\nabla\cdot(a(x) \nabla v) + c(x) v 
\;.  
\end{align*}
Here, $a\in (L^\infty(D))^{n\times n}_\mathrm{sym}$
and $c\in L^\infty(D)$ satisfy the 
ellipticity conditions
$$
\exists \gamma > 0 \;\; \forall \xi\in \IR^n:\quad 
\xi^\top a \,\xi \geq \gamma |\xi|^2 
\;, \quad 
{\rm ess}\inf_{x\in D} c(x) \geq 0 
\;.
$$
In this case $V=H^1_0(D)$, $H=L^2(D)$, 
$a(v,w)=(a\nabla v, \nabla w)+(cv,w)$, 
and \eqref{eq:AssA} is valid.
\end{example}
\begin{example} \label{ex:Expl2}
With $D$ as in Example~\ref{ex:Expl1},
we consider the Stokes equation. 
Then
\begin{align*}
H &= \{ v\in L^2(D)^n: \operatorname{div} v = 0 \;\;\mbox{in}\;\; L^2(D)\;,
       \gamma_0(v\cdot n) = 0 \;\;\mbox{in}\;\; H^{-\frac12}(\partial D) \}\;,
\\
V &=  \{ v\in H^1_0(D)^n: \operatorname{div} v = 0 \;\;\mbox{in}\;\; L^2(D)\}\;,
\end{align*}
where $\gamma_0$ denotes the trace operator 
and the bilinear form is given by 
$a(w,v) = \int_D \nabla w : \nabla v \,\dd x$.
\end{example}
\subsection{Bochner spaces}
\label{sec:BochSpc}
We require Bochner spaces of vector-valued functions defined on
intervals. 
For an interval $I$, a Banach space $X$ with norm 
$\| \cdot \|_X$, 
and for $1\leq p \leq \infty$, we denote by $L^p(I;X)$
the space of strongly measurable functions $u\colon I\mapsto X$ 
such that
\begin{align*}
\| u \|_{L^p(I;X)} 
=
\left( \int_I \| u(t) \|_X^p \,\dd t \right)^{1/p} <\infty
\end{align*}
for $1\leq p < \infty$ with the usual modification for $p=\infty$.
Similarly, we denote by $H^1(I;X)$ the space of functions whose
distributional time derivative belongs to $L^2(I;X)$. 
We also need spaces of continuous functions: 
for $k\in \IN_0$, we denote by $C^k(\bar{I};X)$
the Banach space of $k$-times continuously differentiable 
and bounded mappings 
$u\colon \bar{I}\mapsto X$ endowed with the
standard norm $\| \cdot \|_{C^k(\bar{I};X)}$.
\subsection{Interpolation spaces}
\label{sec:Interp}
We repeatedly use assorted facts from the theory of function space
interpolation (see, e.g., \cite{BL,LM1,Triebel}). 
In particular, we use the interpolation spaces 
$[X,Y]_s$, $0<s<1$,
between two Hilbert spaces with dense embedding $X\subset Y$, 
as defined, for example, in \cite[Chap.~1, D\'ef.~2.1]{LM1}.  

For $0<T\leq\infty$ we denote by $I=(-T,T)$ the symmetric interval,
with $I=\IR$ implied if $T=\infty$, and set $I_> = I\cap \{t>0\}$.
For a separable Hilbert space $H$, we define
\begin{align*}
H^1_{0,\{0\}}(I_>;H):= \{ v\in H^1(I_>;H) : v(0) = 0 \} \;. 
\end{align*}
By the continuity of the embedding $H^1(I_>;H) \subset
C^0(\overline{I_>};H)$, the set $H^1_{0,\{0\}}(I_>;H)$ is the
null space of the trace operator at $t=0$ and, therefore, a
norm-closed, linear subspace of $H^1(I_>;H)$.
We introduce the interpolation spaces
\begin{align*}
\begin{aligned}
H^{s}(I;H) &:= [L^2(I;H),H^1(I;H)]_{{s}}\;, \quad &&s\in (0,1)\;,
\\
H^{s}(I_>;H) &:= [L^2(I_>;H),H^1(I_>;H)]_{{s}}\;, \quad &&s\in (0,1)\;,
\\
H^{s}_{0,\{0\}}(I_>;H) 
&:= [L^2(I_>;H),H^1_{0\{0\}}(I_>;H)]_{s} \;, 
  \quad &&s\in (0,1)\setminus\{\tfrac12\}\;,
\\
H^{\frac12}_{00,\{0\}}(I_>;H) 
&:= [L^2(I_>;H),H^1_{0,\{0\}}(I_>;H)]_{\frac12} \;. \quad && \ 
\end{aligned}
\end{align*}
\begin{remark} \label{rmk:Hs}
With $I_> = (0,T)$ for $0<T\leq \infty$ there holds:
 
\noindent {\upshape (1)}
Consider the interpolation spaces 
$[L^2(I_>;H),H^1_{0,\{0\}}(I_>;H)]_s$ for $0<s<1$, $s\ne \frac12$.
For $0<s<\frac12$, it holds that 
$[L^2(I_>;H),H^1_{0,\{0\}}(I_>;H)]_s = H^s(I_>;H) = [L^2(I_>;H),H^1(I_>;H)]_s$,
i.e., the homogeneous boundary condition at $\{ 0 \}$
is ``not seen'' by the interpolation space, 
whereas for $ \frac12 < s < 1$ we have that 
$$
[L^2(I_>;H),H^1_{0,\{0\}}(I_>;H)]_s = 
 H^s_{0,\{0\}}(I_>;H)  \subset  [L^2(I_>;H),H^1(I_>;H)]_s=H^s(I_>;H)
$$
is a subspace which is norm-closed in $H^s(I_>;H)$, 
\cite[Chap.~1, Remarque~11.3]{LM1}.

\noindent {\upshape (2)} The space $H^{\frac12}_{00,\{0\}}(I_>;H)$, 
which will be important in the present paper, 
is strictly included in 
$H^{\frac12}(I_>;H) = [L^2(I_>;H),H^1(I_>;H)]_{\frac12}$ 
with a topology which is strictly 
finer than that of $ H^{\frac12}(I_>;H)$, \cite[Chap.~1, Thm.~11.7]{LM1}.

\noindent {\upshape (3)} 
$H^{\frac12}_{00,\{0\}}(I_>;H)$ is not closed in the norm of $H^{\frac12}(I_>;H)$.
It is a dense subspace 
(\cite[Theorem~1.4.2.4]{Grisvard85} with $p=2$) 
and the embedding $H^{\frac12}_{00,\{0\}}(I_>;H) \subset H^{\frac12}(I_>;H)$
is continuous, \cite[Lemma~4.8]{Fon09}.

\end{remark}
The following intrinsic characterizations of the spaces of order
$\frac12$ will be useful.  We refer to \cite[Chap.~1]{LM1}, in
particular, for the first one Th\'eor\`eme~9.1 and (10.23) in Section
10.3, and for the second one, Th\'eor\`eme~11.7 and Remarque~11.4.
\begin{proposition}\label{prop:Interp00}
Let $I=(-T,T)$, $I_>=(0,T)$ for $T\in(0,\infty]$.  

\noindent {\upshape (1)} 
The interpolation space $H^{\frac12}(I_>;H)$ consists of all $u\in
L^2(I_>;H)$ which are equal to the restriction to $I_>$ of some
$\tilde{u}\in H^{\frac12}(I;H)$.  The interpolation norm of
$H^{\frac12}(I_>;H)$ is equivalent to the intrinsic norm
$\|\cdot\|_{H^{\frac12}(I_>;H)}$ given by
\be\label{eq:H12norm}
\| u \|^2_{H^{\frac12}(I_>;H)}
=
\| u \|^2_{L^2(I_>;H)}
+
\int_{I_>} \int_{I_>} \frac{\|u(s) - u(t)\|_H^2}{|s-t|^2} \,\dd s\,\dd t
\;.
\ee

\noindent {\upshape (2)}
The interpolation space $H^{\frac12}_{00,\{0\}}(I_>;H)$ consists of
all $u\in H^{\frac12}(I_>;H)$ such that the function $s\mapsto
s^{-\frac12}u(s)$ belongs to $L^2(I_>;H)$ with intrinsic norm
$\|\cdot\|_{H^{\frac12}_{00,\{0\}}(I_>;H)}$ given by
\be\label{eq:H1200norm}
\| u \|^2_{H^{\frac12}_{00,\{0\}}(I_>;H)}
=
\| u \|^2_{L^2(I_>;H)} 
+ 
\int_{I_>} \int_{I_>} \frac{\|u(s) - u(t)\|_H^2}{|s-t|^2} \,\dd s\,\dd t
+
\int_{I_>} \frac{1}{s} \|u(s)\|_H^2 \,\dd s
\;.
\ee
The constants implied by the norm equivalences are independent of 
$T\in(0,\infty]$.
\end{proposition}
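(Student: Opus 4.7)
The plan is to establish both characterizations by reducing to the whole-line case where Fourier analysis provides universal constants, and by explicit Slobodeckij-type computations with extension operators whose norms do not depend on $T$.

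For Part~(1), I begin with $I=\IR$ (the case $T=\infty$) and use Plancherel on $H$-valued functions to identify $[L^2(\IR;H),H^1(\IR;H)]_{1/2}$ with $\{u:\int_\IR(1+|\xi|)\|\hat u(\xi)\|_H^2\,\dd\xi<\infty\}$; the standard identity
\begin{equation*}
\int_\IR\int_\IR \frac{\|u(s)-u(t)\|_H^2}{|s-t|^2}\,\dd s\,\dd t
= c_0 \int_\IR |\xi|\,\|\hat u(\xi)\|_H^2\,\dd\xi
\end{equation*}
with a universal constant $c_0$ then yields the equivalence of the interpolation and intrinsic Slobodeckij norms on $\IR$. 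For $I_>=\IR_+$ (still $T=\infty$), the even-reflection extension $Eu(t)=u(|t|)$ realizes the Lions--Magenes restriction characterization of $H^{1/2}(I_>;H)$: splitting the Slobodeckij integral on $\IR^2$ into four quadrants and estimating the two mixed quadrants via $|s+t|\ge|s-t|$ for $s,t\ge 0$ gives $\|Eu\|^2_{H^{1/2}(\IR;H)}\le 4\|u\|^2_{H^{1/2}(I_>;H)}$ in intrinsic norms, which together with the trivial restriction bound proves Part~(1) on $I_>=\IR_+$ with universal constants. For $0<T<\infty$, the same reflection $Eu(t)=u(|t|)$ gives a uniformly bounded extension $H^{1/2}(0,T;H)\to H^{1/2}(-T,T;H)$ by the identical quadrant-splitting estimate, so Part~(1) on $(0,T)$ reduces to the intrinsic/interpolation equivalence on the symmetric interval $I=(-T,T)$, which can be read off from the scale-invariance of the Slobodeckij seminorm combined with the dilation $v(\sigma)=u(T\sigma)$ to reduce to the unit interval.

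For Part~(2), the pivotal observation is that $u\in H^{1/2}_{00,\{0\}}(I_>;H)$ if and only if the zero-extension $\bar u$ to $I$ lies in $H^{1/2}(I;H)$, with equivalent norms (Lions--Magenes Chap.~1 Thm.~11.7 and its proof, which is $T$-uniform). The Slobodeckij seminorm of $\bar u$ on $I$ then splits into four quadrant integrals: the $I_<\times I_<$ integral vanishes, the $I_>\times I_>$ integral reproduces the Slobodeckij seminorm of $u$ on $I_>$, and each mixed integral evaluates via the elementary identity $\int_{-T}^0(s-t)^{-2}\,\dd t = s^{-1}-(s+T)^{-1}$ to
\begin{equation*}
\int_0^T \|u(s)\|_H^2\bigl[s^{-1}-(s+T)^{-1}\bigr]\,\dd s.
\end{equation*}
Since $(s+T)^{-1}\le \tfrac{1}{2}s^{-1}$ for $s\in(0,T)$ (and vanishes as $T\to\infty$), this mixed contribution is equivalent to the Hardy term $\int_{I_>}s^{-1}\|u(s)\|_H^2\,\dd s$ with universal constants $1$ and $2$. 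Combined with Part~(1) applied to $\bar u$, this yields Part~(2) uniformly in $T$.

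The main obstacle is the $T$-uniform equivalence of the intrinsic and interpolation norms on the symmetric interval $I=(-T,T)$ entering Part~(1): the naive multiplicative-cutoff extension to $\IR$ introduces the cutoff's derivative with $L^2$-norm scaling as $T^{-1/2}$, producing a $T$-dependent factor that one must avoid. The resolution is to rely throughout on the scale-invariance of the Slobodeckij seminorm, either by the dilation $u(t)\mapsto u(T\sigma)$ which reduces to the unit-interval result of Lions--Magenes, or by reflection-based extensions that introduce no new length scale, so that every estimate is of the same quadrant-splitting type that carries universal constants in the half-line case.
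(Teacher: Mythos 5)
The paper offers no proof of this proposition at all; it simply cites Lions--Magenes (Th\'eor\`eme~9.1, formula~(10.23), Th\'eor\`eme~11.7, Remarque~11.4 of Chapter~1). You, in contrast, supply an actual constructive argument: Plancherel on $\IR$, even-reflection extension from $I_>$ to $I$ with a four-quadrant Slobodeckij estimate, and for Part~(2) a zero-extension quadrant-splitting with the explicit identity $\int_{-T}^{0}(s-t)^{-2}\,\dd t = s^{-1}-(s+T)^{-1}$ and the elementary bound $\tfrac12 s^{-1}\le s^{-1}-(s+T)^{-1}\le s^{-1}$ on $(0,T)$. These computations are correct, and the zero-extension step in your Part~(2) is exactly the paper's own proof of Proposition~\ref{prop:ZerExt} (which the paper only states for $T=\infty$); you have extended that argument to finite $T$ with a clean Hardy-weight estimate. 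So the route is genuinely different: the paper delegates to Lions--Magenes, while you reduce everything to a handful of explicit reflection/extension inequalities plus one Fourier identity, which is more self-contained and makes the $T$-uniform constants visible where they occur.

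The one place where your sketch is loose is the symmetric-interval step in Part~(1) for finite $T$: you invoke ``the scale-invariance of the Slobodeckij seminorm combined with the dilation $v(\sigma)=u(T\sigma)$'' to transfer the intrinsic/interpolation equivalence from $(-1,1)$ to $(-T,T)$. But the interpolation norm $\|\cdot\|_{[L^2(I),H^1(I)]_{1/2}}$ is \emph{not} dilation-invariant (the $L^2$ and $\dot H^1$ ingredients scale by $T^{1/2}$ and $T^{-1/2}$, respectively), so a naive dilation does not transport the equivalence constants. To make this step rigorous you should first peel the interpolation norm into its $L^2$ and homogeneous pieces --- e.g.\ via the Neumann spectral representation, where $\|u\|^2_{[L^2,H^1]_{1/2}}\simeq\sum_k(1+\mu_k)^{1/2}|c_k|^2\simeq\|u\|^2_{L^2}+\sum_k\mu_k^{1/2}|c_k|^2$ with universal constants --- and \emph{then} dilate only the homogeneous seminorm $\sum_k\mu_k^{1/2}|c_k|^2$, which, like the Slobodeckij seminorm, does transform scale-invariantly under $u\mapsto u(T\cdot)$. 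With that decomposition inserted, the dilation argument closes; as written, the phrase ``reduce to the unit-interval result of Lions--Magenes'' hides precisely the point you yourself flag as the main obstacle. The same remark applies implicitly to your Part~(2), where you cite Lions--Magenes Th.~11.7 as ``$T$-uniform'' without indicating why; the cleanest route there is again the spectral decomposition with the Dirichlet-at-$0$ / Neumann-at-$T$ Laplacian, followed by the same dilation of the seminorm parts.
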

\subsection{Fractional calculus on the half line}
\label{sec:FracDeriv}
To render our presentation self-contained, we recapitulate here
fractional calculus from \cite{SamkoEtAl93} as necessary by our
subsequent analysis.

For $\phi\in L^1(\IR_>;\IC)$, $\alpha\in(0,1)$, the Riemann-Liouville
fractional integrals \cite[Def.~2.1]{SamkoEtAl93} are
\begin{align*}
  (\II_{+}^\alpha \phi)(t)
  &=\frac{1}{\Gamma(\alpha)}\int_0^t (t-s)^{\alpha-1}\phi(s)\, \dd s\;,
  \quad t\in\IR_>\;, \\
  (\II_{-}^\alpha \phi)(t)
  &=\frac{1}{\Gamma(\alpha)}\int_t^\infty (s-t)^{\alpha-1}\phi(s)\, \dd s\;,
  \quad t\in\IR_>\;. 
\end{align*} 
Then we have integration by parts \cite[(2.20) and Corollary to Theorem~3.5
p.~67]{SamkoEtAl93}:
\begin{align}
  \label{eq:2}
  \int_{\IR_>} (\II_{+}^\alpha \psi)(t) \phi(t)\,\dd t
  = \int_{\IR_>} \psi(t) (\II_{-}^\alpha \phi)(t)\,\dd t
\end{align}
and the semigroup property \cite[(2.21)]{SamkoEtAl93}: 
\begin{align}
  \label{eq:1}
  \II_{+}^{\alpha+\beta} \phi= \II_{+}^{\alpha} \II_{+}^{\beta} \phi\;, \quad
  \II_{-}^{\alpha+\beta} \phi= \II_{-}^{\alpha} \II_{-}^{\beta} \phi\;, \quad
\alpha,\beta>0\;.
\end{align}
The proofs of \eqref{eq:2}, \eqref{eq:1} are elementary calculations
with integrals.

By $\rD$ we denote the time derivative of order $1$ and we define time
derivatives of fractional order $\alpha \in(0,1)$ for $u\in
C_0^\infty(\IR)$,
\begin{align}
\label{eq:Da+}
(\rD^\alpha_+ u)(t) 
&:= 
(\rD \II_{+}^{1-\alpha}u)(t)
=\frac{1}{\Gamma(1-\alpha)} \rD \int_{0}^t (t-s)^{-\alpha} u(s) \,\dd s\;,
\\
\label{eq:Da-}
(\rD^\alpha_- u)(t) 
&:= 
-(\rD \II_{-}^{1-\alpha}u)(t)
=- \frac{1}{\Gamma(1-\alpha)} \rD \int_{t}^\infty (s-t)^{-\alpha} u(s) \,\dd s\;.
\end{align}
We require a space of test functions, which is closed under the action
of $\rD^{\alpha}_+$ and $\rD^{\alpha}_-$; to this
end we introduce (cp.\ \cite{Fon09})
\begin{align*}
\cF(\IR;\IC) 
:= 
\left\{
u\in C^\infty(\IR;\IC) : \| u \|_{H^s(\IR;\IC)} 
< \infty \;\;\forall s\in \IR
\right\}\;.
\end{align*}
The set $\cF(\IR;\IC)$ is a Fr\'echet space with respect to the
topology induced by the family of norms $\{\| \cdot
\|_{H^s(\IR;\IC)}\}_{s\in\IR}$ and we have the dense embeddings $
\cD(\IR;\IC) \subset \cS(\IR;\IC) \subset \cF(\IR;\IC) \subset
\cE(\IR;\IC)$ (where $\cD$, $\cS$, and $\cE$ are the classical test
function spaces).  We observe that the definitions \eqref{eq:Da+},
\eqref{eq:Da-} remain meaningful for $u\in \cF(\IR;\IC)$.  We further
define test function spaces
\begin{align*}
\cF(\IR_>;\IC) = \left\{u\in C^\infty(\IR_>;\IC): \exists  
  \tilde{u}\in\cF(\IR;\IC) \text{ such that } u=\tilde{u}|_{\IR_>}  \right\}
\end{align*} 
and, with $E_0$ the ``extension by zero'' operator,
\begin{align*}
\cF_0(\IR_>;\IC) 
= \left\{u\in C^\infty(\IR_>;\IC): E_0u\in\cF(\IR;\IC)\right\}\;.
\end{align*}
The subspaces 
$\cF_0(\IR_>;\IC) \subset H^{\frac12}_{00,\{0\}}(\IR_>;\IC)$,
$\cF(\IR_>;\IC) \subset H^{\frac12}(\IR_>;\IC)$  
are dense, 
see \cite[Lemma~3.7]{Fon09}. 

We denote the corresponding spaces of distributions by
$\cF_0'(\IR_>;\IC)=\cF(\IR_>;\IC)^*$ and
$\cF'(\IR_>;\IC)=\cF_0(\IR_>;\IC)^*$.
Then it follows that, see \cite[(2.24)--(2.29)]{Fon09}, 
\begin{align*}
  &\rD_{+}^\alpha\colon \cF_0(\IR_>;\IC)\to \cF_0(\IR_>;\IC)\;, \quad 
  \rD_{-}^\alpha\colon \cF(\IR_>;\IC)\to \cF(\IR_>;\IC)\;,  \\
  &\rD_{+}^\alpha\colon \cF_0'(\IR_>;\IC)\to \cF_0'(\IR_>;\IC)\;, \quad 
  \rD_{-}^\alpha\colon \cF'(\IR_>;\IC)\to \cF'(\IR_>;\IC)\;. 
\end{align*}
Here the $\cF_0'(\IR_>;\IC)$ distribution derivative $\rD_{+}^\alpha$
means 
$$
  \langle \rD_{+}^\alpha u,\phi \rangle 
  =\int_{\IR_>} u \,\rD_{-}^\alpha\phi \,\dd t 
\quad\forall \phi\in \cF(\IR_>;\IC) \;,
$$
and the $\cF'(\IR_>;\IC)$ distribution derivative $\rD_{-}^\alpha$
means 
\begin{align*}
  \langle \rD_{-}^\alpha u,\phi \rangle 
  =\int_{\IR_>} u \,\rD_{+}^\alpha\phi \,\dd t 
\quad\forall \phi\in \cF_0(\IR_>;\IC) \;.
\end{align*}

We can now prove a relevant integration by parts formula.  

\begin{lemma}\label{lem:intbyparts}
The $\cF_0'(\IR_>;\IC)$ distribution derivative $\rD w$ of 
$w\in H^{\frac12}_{00,\{0\}}(\IR_>;\IC)$ 
satisfies 
\begin{align}
  \label{eq:6b} 
  \langle \rD w , v \rangle 
  =  \int_{\IR_>} \rD_{+}^{\frac12}w \,\rD_{-}^{\frac12}v \,\dd t 
\quad\forall v\in \cF(\IR_>;\IC) \;.  
\end{align}
\end{lemma}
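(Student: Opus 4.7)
The plan is to establish \eqref{eq:6b} first on the dense smooth subset $\cF_0(\IR_>;\IC)$ and then pass to the limit in $H^{\frac12}_{00,\{0\}}(\IR_>;\IC)$.

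In the smooth case the essential step is the factorisation $\rD = \rD_+^{\frac12}\rD_+^{\frac12}$ on $\cF_0(\IR_>;\IC)$. I will first prove the commutation
\[
\rD\,\II_+^{\frac12} u = \II_+^{\frac12} \rD u, \qquad u\in\cF_0(\IR_>;\IC),
\]
by the change of variables $\sigma=t-s$ and differentiation under the integral sign; the boundary term of the form $t^{-\frac12}u(0)/\Gamma(\tfrac12)$ vanishes precisely because $u(0)=0$ for $u\in\cF_0$. Applied to $w\in\cF_0$ this reads $\rD_+^{\frac12} w = \II_+^{\frac12}\rD w$, and since $\cF_0$ is invariant under $\rD$ we may apply $\rD_+^{\frac12}=\rD\,\II_+^{\frac12}$ once more, using the semigroup property \eqref{eq:1}, to obtain
\[
\rD_+^{\frac12}\rD_+^{\frac12} w
= \rD\,\II_+^{\frac12}\bigl(\II_+^{\frac12}\rD w\bigr)
= \rD\,\II_+^{1}\rD w = \rD w,
\]
where the last equality uses $(\II_+^1\rD w)(t)=w(t)-w(0)=w(t)$.

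With the factorisation in hand and noting that $\rD_+^{\frac12}$ maps $\cF_0(\IR_>;\IC)$ into itself, I apply the distributional integration-by-parts formula that defines $\rD_+^{\frac12}$ as an operator on $\cF_0'(\IR_>;\IC)$: for $v\in\cF(\IR_>;\IC)$,
\[
\int_{\IR_>}\rD w\cdot v\,\dd t
= \int_{\IR_>}\rD_+^{\frac12}\bigl(\rD_+^{\frac12}w\bigr)\cdot v\,\dd t
= \int_{\IR_>}\rD_+^{\frac12}w\cdot \rD_-^{\frac12}v\,\dd t,
\]
which is \eqref{eq:6b} for $w\in\cF_0(\IR_>;\IC)$; here the leftmost integral coincides with $\langle \rD w,v\rangle$ because classical integration by parts produces no contribution at $t=0$ thanks to $w(0)=0$.

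To extend to arbitrary $w\in H^{\frac12}_{00,\{0\}}(\IR_>;\IC)$ I will verify that both sides of \eqref{eq:6b} are continuous linear functionals of $w$ in the $H^{\frac12}_{00,\{0\}}$-norm, with $v\in\cF(\IR_>;\IC)$ fixed. For the right-hand side this uses the boundedness $\rD_+^{\frac12}\colon H^{\frac12}_{00,\{0\}}(\IR_>;\IC)\to L^2(\IR_>;\IC)$, a consequence of the intrinsic characterisation in Proposition~\ref{prop:Interp00}(2) (which identifies $H^{\frac12}_{00,\{0\}}$ with the space of functions whose extension by zero lies in $H^{\frac12}(\IR;\IC)$) combined with the Fourier-multiplier representation of $\rD_+^{\frac12}$ on the full line from \cite{Fon09}; since $v\in\cF(\IR_>;\IC)$ lies in $H^s$ for every $s$, the quantity $\rD_-^{\frac12}v$ belongs to $L^2(\IR_>;\IC)$. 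For the left-hand side, the $\cF_0'$ distributional derivative is given by $\langle \rD w,v\rangle = -\int_{\IR_>} w\,\rD v\,\dd t$ and is controlled by $\|w\|_{L^2}\|\rD v\|_{L^2}$. The density of $\cF_0(\IR_>;\IC)$ in $H^{\frac12}_{00,\{0\}}(\IR_>;\IC)$ recorded in Section~\ref{sec:FracDeriv} then transports the identity to the full space.

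The main technical hurdle is the commutation $\rD\,\II_+^{\frac12}=\II_+^{\frac12}\rD$: this is the sole place where the homogeneous trace at $t=0$ built into the spaces $\cF_0$ and $H^{\frac12}_{00,\{0\}}$ is used in an essential way, and it is also what prevents \eqref{eq:6b} from holding on the larger space $H^{\frac12}(\IR_>;\IC)$.
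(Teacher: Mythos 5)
Your proof is correct, and it does pin down the two essential ingredients (the boundary condition $w(0)=0$ and the fractional integration by parts), but it takes a different route from the paper's. The paper factors \emph{both} $w$ and $v$ symmetrically: writing $\psi=\rD w$ and $\phi=-\rD v$, it expresses $w=\II_+^1\psi=\II_+^{\frac12}\II_+^{\frac12}\psi$ (here $w(0)=0$ is used) and $v=\II_-^1\phi=\II_-^{\frac12}\II_-^{\frac12}\phi$ (here $v(\infty)=0$ is used), and then a single application of the Riemann--Liouville integration-by-parts identity \eqref{eq:2} to the middle pairing gives $(w,-\rD v)=(\II_+^{\frac12}\II_+^{\frac12}\psi,\phi)=(\II_+^{\frac12}\psi,\II_-^{\frac12}\phi)=(\rD_+^{\frac12}w,\rD_-^{\frac12}v)$. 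You instead establish a commutation lemma $\rD\,\II_+^{\frac12}u=\II_+^{\frac12}\rD u$ on $\cF_0$, use it together with the semigroup property \eqref{eq:1} to prove the one-sided factorization $\rD w=\rD_+^{\frac12}\rD_+^{\frac12}w$, and then invoke the distributional transpose relation to move one factor of $\rD_+^{\frac12}$ onto $v$. Both arguments are sound. The paper's version is shorter and only needs the cited identity \eqref{eq:2}; your version needs the commutation lemma as a sub-step (a lemma which the paper avoids by simply composing the known integral-operator identities) and implicitly uses the consistency of the classical and distributional fractional derivatives on $\cF_0$ --- a fact that is true (and indeed a consequence of the same commutation argument on the $\II_-^{\frac12}$ side, using $v(\infty)=0$) but which you should at least flag, since the distributional pairing $\langle\rD_+^{\frac12}(\rD_+^{\frac12}w),v\rangle$ and the classical integral $\int\rD_+^{\frac12}(\rD_+^{\frac12}w)\,v\,\dd t$ are not equal by definition. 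On the other hand, you are more careful than the paper about the final density step: you explicitly check that both sides of \eqref{eq:6b} are continuous in $w$ with respect to the $H^{\frac12}_{00,\{0\}}$-norm (via the $L^2$-boundedness of $\rD_+^{\frac12}$ and the trivial $L^2$-estimate for the left side), whereas the paper simply asserts that density suffices. Your closing remark correctly isolates why the identity fails on the larger space $H^{\frac12}(\IR_>;\IC)$ without the vanishing trace.
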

\begin{proof} By definition we have
\begin{align*}
      \langle \rD w , v \rangle  
  =  \int_{\IR_>} w \,(-\rD v) \,\dd t 
  =  ( w ,-\rD v) 
\quad\forall v\in \cF(\IR_>;\IC) \;.  
\end{align*} 
Since $\cF_0(\IR_>;\IC) \subset H^{\frac12}_{00,\{0\}}(\IR_>;\IC)$ is dense, it
suffices to show 
\begin{align*}
  (w ,-\rD v)
  =  ( \rD_{+}^{\frac12}w ,\rD_{-}^{\frac12}v )
\quad\forall w\in \cF_0(\IR_>;\IC) \;, \; v\in \cF(\IR_>;\IC) \;.  
\end{align*}
If $w\in \cF_0(\IR_>;\IC)$, then $w(0)=0$, so that $w=\II_{+}^1
\psi=\II_{+}^{\frac12}\II_{+}^{\frac12}\psi$, where $\psi=\rD w$.  Similarly, 
If $v\in \cF(\IR_>;\IC)$, then $v(\infty)=0$, so that $v=\II_{-}^1
\phi=\II_{-}^{\frac12}\II_{-}^{\frac12}\phi$, where $\phi=-\rD v$.
Therefore,  by integration by parts \eqref{eq:2}, 
\begin{align*}
(w ,-\rD v)
= (\II_{+}^{\frac12}\II_{+}^{\frac12}\psi , \phi)
= (\II_{+}^{\frac12}\psi ,\II_{-}^{\frac12}\phi)
= (\rD_{+}^{\frac12}w ,\rD_{-}^{\frac12}v)\;.
\end{align*}
\end{proof}
%
\subsection{Extension by zero}
\label{sec:Ext}
In the previous Subsection~\ref{sec:Interp} the space
$H^{\frac12}_{00,\{0\}}(\IR_>;H)$ is characterized by means of an
intrinsic norm. Here we give an alternative characterization in terms
of extension by zero, denoted $E_0$.  
\begin{proposition}\label{prop:ZerExt}
The space  $H^{\frac12}_{00,\{0\}}(\IR_>;H)$ is equals $\{ w\in
H^{\frac12}(\IR_>;H): E_0w \in H^{\frac12}(\IR;H) \}$. 
A norm on $H^{\frac12}_{00,\{0\}}(\IR_>;H)$, 
which is equivalent to \eqref{eq:H1200norm},
is given by $\| E_0\cdot \|_{H^{\frac12}(\IR;H)}$; more precisely, 
for every $w\in H^{\frac12}_{00,\{0\}}(\IR_>;H)$ there holds
\be\label{eq:ZeroExCont}
\| w \|^2_{H^{\frac12}_{00,\{0\}}(\IR_>;H)}
\leq 
\| E_0w \|^2_{H^{\frac12}(\IR;H)} 
\leq 
2
\| w \|^2_{H^{\frac12}_{00,\{0\}}(\IR_>;H)}
\;.
\ee
\end{proposition}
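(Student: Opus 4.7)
The plan is to compute $\|E_0 w\|_{H^{1/2}(\IR;H)}^2$ explicitly in terms of the intrinsic data of $w$ on $\IR_>$ by a direct splitting of the Gagliardo double integral. Using the full-line analogue of Proposition~\ref{prop:Interp00}(1) (namely, the Slobodeckij characterization of $H^{1/2}(\IR;H)$, which is standard and proved identically to the $I_>$ case), I would write
\begin{equation*}
\|E_0 w\|^2_{H^{1/2}(\IR;H)}
= \|E_0 w\|^2_{L^2(\IR;H)}
+ \int_\IR\int_\IR \frac{\|E_0 w(s)-E_0 w(t)\|_H^2}{|s-t|^2}\,\dd s\,\dd t,
\end{equation*}
and note that $\|E_0 w\|^2_{L^2(\IR;H)} = \|w\|^2_{L^2(\IR_>;H)}$ by definition of $E_0$.

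Next I would split the double integral into the four quadrants of $\IR^2$. On $\{s\le 0,\ t\le 0\}$ both arguments lie where $E_0 w\equiv 0$, so the contribution vanishes. On $\{s>0,\ t>0\}$ the integrand coincides with the Gagliardo integrand for $w$ on $\IR_>\times \IR_>$. On the mixed quadrant $\{s>0,\ t<0\}$, the difference reduces to $\|w(s)\|_H$, and Fubini gives
\begin{equation*}
\int_{\IR_>} \|w(s)\|_H^2 \int_{-\infty}^0 \frac{\dd t}{(s-t)^2}\,\dd s
= \int_{\IR_>} \frac{1}{s}\|w(s)\|_H^2 \,\dd s,
\end{equation*}
using $\int_{-\infty}^0 (s-t)^{-2}\,\dd t = 1/s$ for $s>0$. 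By symmetry the quadrant $\{s<0,\ t>0\}$ contributes the same value.

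Collecting the four terms and comparing with \eqref{eq:H1200norm} yields the identity
\begin{equation*}
\|E_0 w\|^2_{H^{1/2}(\IR;H)}
= \|w\|^2_{H^{\frac12}_{00,\{0\}}(\IR_>;H)} + \int_{\IR_>} \frac{1}{s}\|w(s)\|_H^2 \,\dd s.
\end{equation*}
Both estimates in \eqref{eq:ZeroExCont} fall out immediately: the lower bound because the extra integral is nonnegative, and the upper bound because that same extra integral is already one of the three summands defining $\|w\|^2_{H^{\frac12}_{00,\{0\}}(\IR_>;H)}$, hence is bounded by it, producing a factor of at most $2$. Finally, the set equality is read off the same identity: if $w\in H^{\frac12}_{00,\{0\}}(\IR_>;H)$, then $\|E_0 w\|_{H^{1/2}(\IR;H)}$ is finite; conversely, if $w\in H^{\frac12}(\IR_>;H)$ with $E_0 w\in H^{1/2}(\IR;H)$, then the identity forces $\int_{\IR_>} s^{-1}\|w(s)\|_H^2\,\dd s<\infty$, which by Proposition~\ref{prop:Interp00}(2) places $w$ in $H^{\frac12}_{00,\{0\}}(\IR_>;H)$.

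The argument is essentially a one-shot Fubini computation; the only point requiring care is the justification of the full-line Slobodeckij characterization of $H^{1/2}(\IR;H)$ with constants independent of the interval, and the bookkeeping of the symmetric factor of $2$ arising from the two mixed quadrants. The equivalence constants $1$ and $2$ in \eqref{eq:ZeroExCont} are sharp outputs of this identity rather than artifacts of crude estimation.
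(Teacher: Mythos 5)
Your proof is correct and follows essentially the same route as the paper: split the Gagliardo double integral of $E_0 w$ over the four quadrants, observe the $\{s,t\le 0\}$ quadrant vanishes, that $\{s,t>0\}$ reproduces the $\IR_>$-Gagliardo term, and that each mixed quadrant contributes $\int_{\IR_>} s^{-1}\|w(s)\|_H^2\,\dd s$ via the elementary identity $\int_{-\infty}^0(s-t)^{-2}\,\dd t = 1/s$, leading to the same exact identity $\| E_0 w\|^2_{H^{1/2}(\IR;H)}=\|w\|^2_{H^{1/2}_{00,\{0\}}(\IR_>;H)}+\int_{\IR_>}s^{-1}\|w(s)\|_H^2\,\dd s$ and, from it, both constants in \eqref{eq:ZeroExCont}. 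The one mild difference is that the paper cites \cite[Lemma~3.5]{Fon09} for the set equality and then performs the Fubini computation only for the norm bounds, whereas you extract the set equality directly from the same identity together with Proposition~\ref{prop:Interp00}(2); this is a minor gain in self-containedness but not a different argument.
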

\begin{proof}
  For a proof of the first part, we refer to \cite[Lemma~3.5]{Fon09}.
  It remains to show \eqref{eq:ZeroExCont}.  Consider an arbitrary
  $w\in H^{\frac12}_{00,\{0\}}(\IR_>;H)$.  Then $\tilde{w} = E_0w\in
  H^{\frac12}(\IR;H)$ and $\| w \|_{L^2(\IR_>;H)} =\| \tilde{w}
    \|_{L^2(\IR;H)} =\|E_0 w \|_{L^2(\IR;H)}$. We next compute with the
    seminorm defined in \eqref{eq:H12+Seminorm} below: 
\begin{align*}
\begin{split}
| E_0w |^2_{H^{\frac12}(\IR;H)} 
&=| \tilde w |^2_{H^{\frac12}(\IR;H)} 
:= 
\int_{-\infty}^\infty \int_{-\infty}^\infty 
\frac{\| \tilde{w}(t) - \tilde{w}(t') \|_H^2 }{|t-t'|^2}\, \dd t\,\dd t'
\\
& = 
\int_{0}^\infty \int_{0}^\infty \frac{\| \tilde{w}(t) - \tilde{w}(t')
  \|_H^2 }{|t-t'|^2}\, \dd t\, \dd t'
+
2 \int_{t'= -\infty}^0 \int_{t=0}^\infty \frac{\| \tilde{w}(t) -
  \tilde{w}(t') \|_H^2 }{|t-t'|^2} \,\dd t\, \dd t'
\\
& = 
\int_{0}^\infty \int_{0}^\infty \frac{\| {w}(t) - {w}(t')
  \|_H^2 }{|t-t'|^2}\, \dd t\, \dd t'
+
2 \int_{0}^{\infty} \frac{\| w(t) \|_H^2}{t} \,\dd t 
\;.
\end{split}
\end{align*}
By comparison with the seminorm part of the norm
  \eqref{eq:H1200norm}, we conclude that
\begin{align*}
| w |^2_{H_{00,\{0\}}^{\frac12}(\IR_>;H)} 
\le | E_0w |^2_{H^{\frac12}(\IR;H)} 
\le 2 | w |^2_{H_{00,\{0\}}^{\frac12}(\IR_>;H)}  
\end{align*}
and the proof is complete.
\end{proof}
\subsection{Further characterizations}
\label{sec:Characterizations}
The preceding function spaces are intimately connected to the
fractional derivatives $\rD^{\frac12}_+$ and $\rD^{\frac12}_-$ on
$\cF(\IR_>;\IC)$.  As these derivatives are essential in the proposed
space-time formulation, we discuss their properties in detail.  By
continuity, the operators $\rD^{\frac12}_\pm$ extend to bounded
operators from $H^{\frac12}(\IR_>;\IC)$ to $L^2(\IR_>;\IC)$.  The
following proposition collects several properties of
$\rD^{\frac12}_\pm$.
\begin{proposition}\label{prop:Dpm}
Let $H$ denote an arbitrary Hilbert space over $\IR$. Then there holds

\noindent
{\upshape (1)} 
A function $u\in L^2(\IR_>;H)$ belongs to $H^{\frac12}_{00,\{0\}}(\IR_>;H)$
if and only if its $\cF'_0(\IR_>;H)$-derivative  
$\rD^{\frac12}_+ u\in L^2(\IR_>;H) $. 

\noindent 
{\upshape (2)}  
A function $u\in L^2(\IR_>;H)$ belongs to $H^{\frac12}(\IR_>;H)$
if and only if its $\cF'(\IR_>;H)$-derivative
$\rD^{\frac12}_- u \in L^2(\IR_>;H)$.

\noindent
{\upshape (3)}  
A norm on $H^{\frac12}_{00,\{0\}}(\IR_>;H)$, equivalent to 
the norm \eqref{eq:H1200norm}, is given by
\be \label{eq:H12D+norm}
\| u \|^2_{H^{\frac12}_+(\IR_>;H)}
:=
\| u \|^2_{L^2(\IR_>;H)}
+
\| \rD^{\frac12}_+ u \|^2_{L^2(\IR_>;H)}
\;.
\ee
A norm on $H^{\frac12}(\IR_>;H) $, equivalent to the norm
\eqref{eq:H12norm}, is given by
\be\label{eq:H12D-norm}
\| u \|^2_{H^{\frac12}_-(\IR_>;H)}
:=
\| u \|^2_{L^2(\IR_>;H)}
+
\| \rD^{\frac12}_- u \|^2_{L^2(\IR_>;H)}
\;.
\ee
Moreover, for every $u\in H^{\frac12}(\IR;H)$ there holds
\begin{align}
\label{eq:H12pmnorm}
\| u \|^2_{H^{\frac12}(\IR;H)}
&=
\| u \|^2_{L^2(\IR;H)}
+
\| \rD^{\frac12}_- u \|^2_{L^2(\IR;H)}
=
\| u \|^2_{L^2(\IR;H)}
+
\| \rD^{\frac12}_+ u \|^2_{L^2(\IR;H)}
\;,
\\
\label{eq:H12+Seminorm}
\| \rD^{\frac12}_+ u \|^2_{L^2(\IR_;H)}
&\simeq 
| u |^2_{H^{\frac12}(\IR;H)}
:=
\int_{s\in\IR} \int_{t\in\IR} 
\frac{\| u(s) - u(t) \|_H^2}{|s-t|^2} \,\dd s\,\dd t
\;.
\end{align}
\end{proposition}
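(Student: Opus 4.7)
The plan is to first establish the Fourier-based identities \eqref{eq:H12pmnorm} and \eqref{eq:H12+Seminorm} on the whole line $\IR$, and then descend to the half-line: part (1) together with \eqref{eq:H12D+norm} will follow from the zero-extension characterization (Proposition~\ref{prop:ZerExt}), while part (2) together with \eqref{eq:H12D-norm} will use the restriction characterization (Proposition~\ref{prop:Interp00}(1)).

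For \eqref{eq:H12pmnorm}--\eqref{eq:H12+Seminorm} on $\IR$, I would extend $\rD^{\frac12}_\pm$ by continuity from $\cF(\IR;\IC)$ to $H^{\frac12}(\IR;H)$ and identify them as translation-invariant Fourier multipliers with symbols $(\mp\ii\xi)^{\frac12}$. This can be checked on exponentials, or from the semigroup identity $\rD^{\frac12}_\pm\rD^{\frac12}_\pm = \pm\rD$ combined with the standard symbol of $\rD$. Since $\lvert(\mp\ii\xi)^{\frac12}\rvert^2 = |\xi|$, Plancherel yields $\|\rD^{\frac12}_+ u\|^2_{L^2(\IR;H)} = \|\rD^{\frac12}_- u\|^2_{L^2(\IR;H)} = (2\pi)^{-1}\int_\IR |\xi|\,\|\hat u(\xi)\|_H^2\,\dd\xi$, which gives \eqref{eq:H12pmnorm} under the standard Fourier norm $\|u\|^2_{H^{\frac12}(\IR;H)} = (2\pi)^{-1}\int(1+|\xi|)\|\hat u(\xi)\|_H^2\,\dd\xi$ on the interpolation space. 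For \eqref{eq:H12+Seminorm}, rewrite the Slobodeckij seminorm as $\int_\IR h^{-2}\|\tau_h u - u\|^2_{L^2(\IR;H)}\,\dd h$ with $\tau_h u(t):=u(t+h)$, use Plancherel on the translation, and evaluate $\int_\IR h^{-2}|e^{\ii\xi h}-1|^2\,\dd h = 2\pi|\xi|$.

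For part (1) and \eqref{eq:H12D+norm}, the decisive point is that $E_0$ commutes with $\rD^{\frac12}_+$: if $t<0$, the integral $\int_{-\infty}^t (t-s)^{-\frac12}E_0 u(s)\,\dd s$ sees only zeros, so $\rD^{\frac12}_+(E_0 u)(t)=0$; if $t>0$, the integral reduces to $\int_0^t(t-s)^{-\frac12}u(s)\,\dd s$, so $\rD^{\frac12}_+(E_0 u)(t) = \rD^{\frac12}_+ u(t)$. Hence $\|\rD^{\frac12}_+ E_0 u\|^2_{L^2(\IR;H)} = \|\rD^{\frac12}_+ u\|^2_{L^2(\IR_>;H)}$, and the previous step applied to $E_0 u$ gives $\|E_0 u\|^2_{H^{\frac12}(\IR;H)} = \|u\|^2_{L^2(\IR_>;H)} + \|\rD^{\frac12}_+ u\|^2_{L^2(\IR_>;H)}$. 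Combining with Proposition~\ref{prop:ZerExt} yields both the characterization (1) and the norm equivalence \eqref{eq:H12D+norm}.

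For part (2) and \eqref{eq:H12D-norm}, the forward direction mirrors the preceding argument. Given $u\in H^{\frac12}(\IR_>;H)$, choose $\tilde u\in H^{\frac12}(\IR;H)$ restricting to $u$ (Proposition~\ref{prop:Interp00}(1)); since for $t>0$ the integral defining $\rD^{\frac12}_-\tilde u(t)$ runs over $(t,\infty)\subset\IR_>$ where $\tilde u = u$, we have $\rD^{\frac12}_-\tilde u|_{\IR_>} = \rD^{\frac12}_- u$, so $\|\rD^{\frac12}_- u\|_{L^2(\IR_>;H)} \leq \|\rD^{\frac12}_-\tilde u\|_{L^2(\IR;H)} \leq C\|u\|_{H^{\frac12}(\IR_>;H)}$. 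The converse implication is the main obstacle: unlike in part (1), the zero-extension of a generic $u \in H^{\frac12}(\IR_>;H)$ does not lie in $H^{\frac12}(\IR;H)$, and the natural reflection extensions produce cross-terms containing $\rD^{\frac12}_+ u$ rather than the given $\rD^{\frac12}_- u$. I would obtain this direction by invoking Fontes~\cite{Fon09}, where precisely this intrinsic characterization of $H^{\frac12}(\IR_>;H)$ via $\rD^{\frac12}_-$ is proved; as an alternative route, one can verify that $\cF(\IR_>;H)$ is dense in both $H^{\frac12}(\IR_>;H)$ and in $\{u\in L^2(\IR_>;H):\rD^{\frac12}_- u\in L^2(\IR_>;H)\}$, establish the norm equivalence on $\cF(\IR_>;H)$ via the full-line Fourier calculation above, and conclude by completeness of both spaces.
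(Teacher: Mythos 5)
Your proposal is essentially correct, and it is substantially more self-contained than the paper's own ``proof,'' which simply points to \cite[Lemmas~3.5, 3.6, 3.8, 3.9]{Fon09} and \cite[Sect.~3, (4.13)]{F00} for every item. Your Fourier-multiplier derivation of \eqref{eq:H12pmnorm} and \eqref{eq:H12+Seminorm} on $\IR$ (symbols $(\ii\xi)^{\frac12}$, $(-\ii\xi)^{\frac12}$ with common modulus $|\xi|^{\frac12}$; Plancherel for the seminorm via $\int_\IR h^{-2}|e^{\ii\xi h}-1|^2\,\dd h = 2\pi|\xi|$) is exactly the content of the Fontes references. Your treatment of part (1) and \eqref{eq:H12D+norm}---commuting $E_0$ with $\rD^{\frac12}_+$, applying the $\IR$-identity to $E_0u$, and invoking Proposition~\ref{prop:ZerExt} in both directions---is a clean, correct argument that the paper itself uses in a different place (Subsection~\ref{sec:coercivityR+}) but does not connect to this proposition. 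Likewise, your forward direction of part (2) via the restriction identity $\rD^{\frac12}_-\tilde{u}|_{\IR_>}=\rD^{\frac12}_-u$ is correct.

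You have correctly located the one genuinely nontrivial point: the converse in part (2). Falling back on \cite{Fon09} there, as you do, is precisely what the paper does. However, your sketched ``alternative route'' via density is not a valid fallback as stated: the density of $\cF(\IR_>;\IC)$ in $H^{\frac12}(\IR_>;H)$ is given in the paper, but the density of $\cF(\IR_>;\IC)$ in the graph space $\{u\in L^2(\IR_>;H):\rD^{\frac12}_- u\in L^2(\IR_>;H)\}$ under its graph norm is not a priori known, and establishing it is roughly as hard as the inclusion you are trying to prove (you would need, e.g., a mollification or extension construction that is bounded in the graph norm, which is again essentially an extension theorem for $H^{\frac12}$). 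So retain the citation to \cite[Lemmas~3.6, 3.9]{Fon09} for that direction rather than presenting the density argument as an equal alternative.
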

For the proof of \eqref{eq:H12D+norm}, \eqref{eq:H12D-norm} 
we refer to \cite[Lemmas~3.5, 3.8]{Fon09} 
and to \cite[Lemmas~3.6, 3.9]{Fon09}, respectively.
The identity \eqref{eq:H12pmnorm} is immediate from 
the Fourier characterizations of $\rD^{\frac12}_\pm$ in \cite[Sect.~3]{F00}.
For \eqref{eq:H12+Seminorm}, we refer to \cite[(4.13)]{F00}.
We remark that the expression $|\cdot|_{H^{\frac12}(\IR;H)}$
introduced in \eqref{eq:H12+Seminorm} is indeed a seminorm,
as it vanishes on all functions $u\in H$ independent of $t$.

In view of Lemma~\ref{lem:intbyparts} and
Proposition~\ref{prop:Dpm},
it is now clear that the bilinear form
$\langle \rD w,v\rangle$ is bounded on
$H^{\frac12}_{00,\{0\}}(\IR_>;H)\times H^{\frac12}(\IR_>;H)$.
\subsection{Coercivity over $\IR$}
\label{sec:TimeCoerc}
A key ingredient in the theory of Fontes is that the time derivative
is coercive in the sense of Corollary~\ref{coro:X=Y} for functions
defined on $\IR$. We demonstrate this here by considering the
operator, with $A$ as in Subsection~\ref{sec:FuncSpc},
\begin{align*}
Bv = \rD v + A v \;,\quad v\in \cF(\IR;V)\;.
\end{align*}
By fractional integration by parts (immediate from the Fourier
characterizations of $\rD_\pm^{\frac12}$), we find
\be\label{eq:Tuv}
\langle Bw,v \rangle
=
\int_{\IR} \Big(
(\rD^{\frac12}_+ w,  \rD^{\frac12}_- v)_H
+ 
a(w, v)\Big)
\,\dd t
\;,\quad 
w,v \in \cF(\IR;V)
\;.
\ee
We also define the operator 
\begin{align*}
\HH^\alpha := \cos(\pi\alpha)\mathrm{I} + \sin(\pi\alpha) \HH
\;,\quad \alpha\in\IR\;,
\end{align*}
where $\HH$ is the Hilbert transform acting with respect to the
$t$-variable.  By using \eqref{eq:AssA}, we then obtain the
fundamental \emph{coercivity inequality}: for any $w\in \cF(\IR;V)$
\begin{equation*}
\begin{split}
\langle Bw,\HH^{-\alpha} w \rangle
& = 
\langle \rD w +A w, \cos(\pi\alpha)w-\sin(\pi\alpha)\HH w\rangle
\\ 
& =
\cos(\pi\alpha)
\langle \rD w,  w \rangle 
-\sin(\pi\alpha)
\langle \rD^{\frac12}_+ w, \rD^{\frac12}_-\HH w \rangle 
\\ &  \quad 
+\int_{\IR} \big(\cos(\pi\alpha) a(w,w)
-\sin(\pi\alpha) a(w,\HH w)\big) \,\dd t
\\ &
\ge 
\sin(\pi\alpha)
\|\rD^{\frac12}_+ w\|^2_{L^2(\IR;H)}
+
\big( \lambda_-\cos(\pi\alpha) 
- \lambda_+ \sin(\pi\alpha)\big)\| w \|^2_{L^2(\IR;V)}
\;,
\end{split}
\end{equation*}
because $\langle \rD w, w \rangle =0$, $\|\HH w\|_{L^2(\IR;V)}\le
\|w\|_{L^2(\IR;V)}$, and
$\rD^{\frac12}_-\HH=-\rD^{\frac12}_-\HH^{-\frac12}=-\rD^{\frac12}_+$,
see \cite{Fon09}.  Fixing the parameter $\alpha >0$ sufficiently
small, by density of $\cF(\IR;V)$ in $H^{\frac12}(\IR;H)\cap
L^2(\IR;V)$, and \eqref{eq:H12pmnorm}, we find the coercivity
inequality (cp.\ Corollary~\ref{coro:X=Y}): there exists $c>0$ such
that
\begin{align*}
\langle B w,\HH^{-\alpha} w \rangle
\ge c  \big( \|w\|^2_{H^{\frac12}(\IR;H)}+\|w\|^2_{L^2(\IR;V)}\big)
\quad \forall w\in  H^{\frac12}(\IR;H) \cap L^2(\IR;V)
\;.
\end{align*} 
Hence, by Corollary~\ref{coro:X=Y} and
Proposition~\ref{prop:infsup} we conclude that the bilinear form in
\eqref{eq:Tuv} satisfies the inf-sup conditions \eqref{eq:infsup},
\eqref{eq:injec} with $X=Y=H^{\frac12}(\IR;H) \cap L^2(\IR;V)$. 
\subsection{Coercivity over $\IR_>$}
\label{sec:coercivityR+}
In order to prove the inf-sup condition \eqref{eq:infsup} for
functions on $\IR_>$, we take an arbitrary $w\in
H^{\frac12}_{00,\{0\}}(\IR_>;H)\cap L^2(\IR_>;V)$.  Then its extension
by zero, $\tilde{w}=E_0w$, belongs to $H^{\frac12}(\IR;H) \cap
L^2(\IR;V)$ according to Proposition~\ref{prop:ZerExt}. Similarly, if
$\tilde{v}\in H^{\frac12}(\IR;H) \cap L^2(\IR;V)$, then its
restriction to $\IR_>$, $v=R_>\tilde{v}$, belongs to
$H^{\frac12}(\IR_>;H) \cap L^2(\IR_>;V)$ according to
Proposition~\ref{prop:Interp00} (1).  We have the bounds 
\begin{align} \label{eq:a11}
  \|w\|_{H_{00,\{0\}}^{\frac12}(\IR_>;H)\cap L^2(\IR_>;V)}
  &\le \|E_0w\|_{H^{\frac12}(\IR;H)\cap L^2(\IR;V)} \;, \\
\label{eq:a12}  
  \|R_>\tilde{v}\|_{H^{\frac12}(\IR_>;H)\cap L^2(\IR_>;V)}
  &\le \|\tilde{v}\|_{H^{\frac12}(\IR;H)\cap L^2(\IR;V)} \;.
\end{align}
Moreover,
\begin{align*}
  (\rD^{\frac12}_+\tilde{w})(t) 
  =\frac{1}{\Gamma(\frac12)} \rD 
  \int_{-\infty}^t (t-s)^{-\frac12}\tilde{w}(s)\,\dd s 
 =
 \begin{cases}
   \frac{1}{\Gamma(\frac12)} \rD 
   \int_{0}^t (t-s)^{-\frac12}w(s)\,\dd s\;,&t>0\;,\\
 0\;,&t<0\;,
\end{cases}
\end{align*}
that is, $\rD^{\frac12}_+E_0w=E_0 \rD^{\frac12}_+w$.  Similarly, 
\begin{align*}
  (\rD^{\frac12}_-\tilde{v})(t) 
  =-\frac{1}{\Gamma(\frac12)} \rD 
  \int_{t}^\infty (s-t)^{-\frac12}\tilde{v}(s)\,\dd s 
  =-\frac{1}{\Gamma(\frac12)} \rD 
  \int_{t}^\infty (s-t)^{-\frac12}v(s)\,\dd s \;, \quad t>0\;,
\end{align*}
that is,
$R_>\rD^{\frac12}_-\tilde{v}=\rD^{\frac12}_-R_>\tilde{v}$. Hence, 
\begin{align*}
\int_{\IR}
(\rD^{\frac12}_+ E_0w,  \rD^{\frac12}_- \tilde{v})_H 
\,\dd t 
&=
\int_{\IR}
(E_0\rD^{\frac12}_+ w,  \rD^{\frac12}_- \tilde{v})_H 
\,\dd t 
=
\int_{\IR_>}
(\rD^{\frac12}_+ w,  R_>\rD^{\frac12}_- \tilde{v})_H 
\,\dd t 
\\ &=
\int_{\IR_>}
(\rD^{\frac12}_+ w,  \rD^{\frac12}_- R_>\tilde{v})_H 
\,\dd t \;.
\end{align*}
If we denote by $\cB_{\IR}(\cdot,\cdot)$ and
$\cB_{\IR_>}(\cdot,\cdot)$ bilinear forms as in \eqref{eq:Tuv}
computed over $\IR$ and $\IR_>$, respectively, then we conclude that
\begin{align} \label{eq:a13}
  \cB_{\IR_>}(w,R_>\tilde{v})
=\cB_{\IR}(E_0w,\tilde{v})\;. 
\end{align}

The inf-sup condition proved in the previous subsection means that for
each $\tilde{w}\in H^{\frac12}(\IR;H) \cap L^2(\IR;V)$ there is a
$\tilde{v}\in H^{\frac12}(\IR;H) \cap L^2(\IR;V)$ (namely,
$\tilde{v}=\HH^{-\alpha}\tilde{w}$) such that
\begin{align}\label{eq:a14}
  \frac{\cB_{\IR}(\tilde{w},\tilde{v})}
       {\|\tilde{v}\|_{H^{\frac12}(\IR;H)\cap L^2(\IR;V)}} 
   \ge c {\|\tilde{w}\|_{H^{\frac12}(\IR;H)\cap L^2(\IR;V)}}\;. 
\end{align}
For arbitrary $w\in H^{\frac12}_{00,\{0\}}(\IR_>;H)\cap L^2(\IR_>;V)$,
we let $\tilde{w}=E_0w$ and take $\tilde{v}$ as above and set
$v=R_>\tilde{v}$, that is, $v=R_>\HH^{-\alpha}E_0w$.  Then, by 
\eqref{eq:a11},  
\eqref{eq:a12},  
\eqref{eq:a13}, and   
\eqref{eq:a14}, we obtain  
\begin{align*}
  \frac{\cB_{\IR_>}(w,v)}
       {\|v\|_{H^{\frac12}(\IR_>;H)\cap L^2(\IR_>;V)}} 
&  \ge
  \frac{\cB_{\IR}(\tilde{w},\tilde{v})}
       {\|\tilde{v}\|_{H^{\frac12}(\IR;H)\cap L^2(\IR;V)}} 
\\ &
   \ge c \|\tilde{w}\|_{H^{\frac12}(\IR;H)\cap L^2(\IR;V)}
   \ge c \|w\|_{H_{00,\{0\}}^{\frac12}(\IR_>;H)\cap L^2(\IR_>;V)}\;.
\end{align*}
This is the desired inf-sup condition. 
\section{Linear parabolic evolution equations}
\label{sec:PDEs}
%
We present a space-time variational formulation of the initial
boundary value problem for the abstract, linear parabolic evolution
equation \eqref{eq:ParProb} with homogeneous initial condition
\eqref{eq:ParIC}.  For the operator $A\in \cL(V,V^*)$, we assume
\eqref{eq:AssA}.  In what follows, all Hilbert spaces are taken over
the coefficient field $\IR$.
Using the function spaces developed in Section~\ref{sec:Prels}, we now
state the weak form of the linear parabolic initial-value problem
\eqref{eq:ParProb}, \eqref{eq:ParIC}: it is based on the Bochner
spaces
\be\label{eq:ChoicXY}
\begin{aligned}
X &= H^{\frac12}_{00,\{0\}}(\IR_>;H)\cap L^2(\IR_>;V)
   \simeq 
    \big(H^{\frac12}_{00,\{0\}}(\IR_>)\otimes H\big) 
    \cap \big(L^2(\IR_>)\otimes V \big)
\;, 
\\
Y &= H^{\frac12}(\IR_>;H)\cap L^2(\IR_>;V)
  \simeq \big(H^{\frac12}(\IR_>)\otimes H\big) 
    \cap \big( L^2(\IR_>) \otimes V \big)
\;.
\end{aligned}
\ee
Here, $\otimes$ signifies the Hilbert tensor product space endowed
with the (unique) cross norm.  The parabolic operator takes the form $B
= \rD + A$ with the $\cF'_0$-distributional derivative $\rD$
introduced in Section~\ref{sec:FracDeriv}, Lemma~\ref{lem:intbyparts}.

Besides the spaces $X$ and $Y$ in \eqref{eq:ChoicXY}, we will also
need the space
\be\label{eq:defZ}
Z = H^{\frac12}(\IR;H) \cap L^2(\IR;V) \;.
\ee
We shall make use of the following continuity properties of extensions
and restrictions which follow from Proposition~\ref{prop:Interp00} and
Proposition~\ref{prop:ZerExt}.
\begin{proposition}\label{prop:ExtRest}
  For $X$, $Y$, and $Z$ as in \eqref{eq:ChoicXY}, \eqref{eq:defZ} there
  holds:

\noindent {\upshape(1)} 
$X\subset Z$ with continuous embedding given
by the zero extension $E_0$.

\noindent {\upshape(2)}
$Y = R_>(Z)$ with 
$R_>$ denoting the operator of restriction
of elements of $L^2(\IR;H)$ to $\IR_>$.

\noindent {\upshape(3)}
$Z^* \simeq (H^{\frac12}(\IR;H))^* + L^2(\IR;V)^* \simeq 
H^{-\frac12}(\IR;H) + L^2(\IR;V^*)$.
 
\noindent {\upshape(4)}
$Y^*$ is isomorphic to $\{ g\in Z^*: {\rm supp}(g) \subseteq \IR_> \}$.

\noindent {\upshape(5)} 
$X$ is a dense subset of $Y$, that is, 
$\overline{X}^{\| \cdot \|_Y} = Y$.
\end{proposition}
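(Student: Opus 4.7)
The plan is to verify the five claims in sequence, with the first four essentially bookkeeping and the fifth requiring genuine (though standard) work. For (1), I would combine Proposition~\ref{prop:ZerExt}, which guarantees that $E_0$ maps $H^{\frac12}_{00,\{0\}}(\IR_>;H)$ continuously into $H^{\frac12}(\IR;H)$ with norm bound $\sqrt{2}$, with the elementary fact that $E_0$ is an isometry on the $L^2$-Bochner components; assembling under the intersection norm yields the continuous embedding $X\subset Z$. For (2), the $H^{\frac12}$-component is precisely Proposition~\ref{prop:Interp00}(1), and the $L^2(\IR;V)\to L^2(\IR_>;V)$ surjection is trivial; together they give $Y=R_>(Z)$ with a quotient norm equivalent to the $Y$-norm (via the open mapping theorem applied to $R_>$).

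For (3), I would appeal to the classical duality formula for intersections: for two Banach spaces $A,B$ continuously embedded in a common Hausdorff topological vector space (say $L^2(\IR;V^*)$ here), with $A\cap B$ endowed with the natural intersection norm, one has $(A\cap B)^*\simeq A^*+B^*$ under the infimum-of-decompositions norm. Applied with $A=H^{\frac12}(\IR;H)$ and $B=L^2(\IR;V)$, together with the canonical Gel'fand identifications $(H^{\frac12}(\IR;H))^*\simeq H^{-\frac12}(\IR;H)$ and $L^2(\IR;V)^*\simeq L^2(\IR;V^*)$, this yields both isomorphisms.

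For (4), the key observation is that by (2), $R_>\colon Z\to Y$ is a bounded surjection with norm-closed kernel $K=\{z\in Z: z|_{\IR_>}=0\}$. By the open mapping theorem, $Y\simeq Z/K$ isomorphically, whence $Y^*\simeq (Z/K)^*\simeq K^\perp\subset Z^*$. A functional $g\in Z^*$ lies in $K^\perp$ iff it annihilates every $z\in Z$ supported in $(-\infty,0]$, which by a standard localization/restriction-of-distributions argument is equivalent to $\mathrm{supp}(g)\subseteq\overline{\IR_>}$.

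Part (5) is the main technical obstacle. The strategy is to approximate an arbitrary $y\in Y$ by temporal cutoffs $y_\epsilon:=\chi_\epsilon y$, where $\chi_\epsilon(t):=\chi(t/\epsilon)$ for a fixed smooth $\chi$ vanishing on $(-\infty,1]$ and equal to one on $[2,\infty)$. Since $y_\epsilon$ vanishes on $(0,\epsilon)$, its extension by zero lies in $H^{\frac12}(\IR;H)$, so that $y_\epsilon\in H^{\frac12}_{00,\{0\}}(\IR_>;H)$ by Proposition~\ref{prop:ZerExt}; and $|\chi_\epsilon|\le 1$ secures $y_\epsilon\in L^2(\IR_>;V)$, whence $y_\epsilon\in X$. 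Convergence $y_\epsilon\to y$ in $L^2(\IR_>;V)$ is immediate from dominated convergence. The delicate part is $H^{\frac12}(\IR_>;H)$-convergence: working with the intrinsic Slobodeckij seminorm in \eqref{eq:H12norm}, I would decompose the double integral for $(1-\chi_\epsilon)y$ into the region $\{s,t>2\epsilon\}$ (vanishing integrand), a near-diagonal region where the Lipschitz estimate $|\chi_\epsilon(s)-\chi_\epsilon(t)|\lesssim \epsilon^{-1}|s-t|$ combined with the finite Slobodeckij seminorm of $y$ gives a vanishing contribution, and a remaining tail region handled by absolute continuity of the Slobodeckij integral of $y$. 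The main obstacle is to execute this commutator-style estimate cleanly and uniformly in $\epsilon$; once this is done, $y_\epsilon\to y$ in the full $Y$-norm and density follows.
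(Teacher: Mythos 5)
Your treatment of parts (1)--(4) is sound and essentially the route the paper intends: (1) and (2) are immediate from Propositions~\ref{prop:ZerExt} and \ref{prop:Interp00}, (3) is the standard duality of intersections (with the caveat, which you should mention, that $A\cap B$ must be dense in both $A$ and $B$ --- secured here since $C^\infty_c$-functions with values in $V$ are dense in each factor), and (4) is the quotient/annihilator identification. In (4), the inclusion $\{g:\operatorname{supp} g\subseteq \overline{\IR_>}\}\subseteq K^\perp$ also requires a density statement (compactly supported smooth functions dense in $K$), but that one is an interpolation-endpoint density and is unproblematic.

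The real gap is in part (5). The cutoff $y_\eps=\chi_\eps y$ with $\chi_\eps(t)=\chi(t/\eps)$ does \emph{not} converge to $y$ in $H^{\frac12}(\IR_>;H)$, because $s=\frac12$ is the critical Sobolev index at which the Slobodeckij seminorm in \eqref{eq:H12norm} is scale-invariant. Indeed, for any fixed profile $\psi\in C^\infty_c([0,\infty))$ with $\psi(0)\neq 0$, the substitution $s=\eps\sigma$, $t=\eps\tau$ gives
\begin{equation*}
\int_0^\infty\!\!\int_0^\infty
\frac{|\psi(s/\eps)-\psi(t/\eps)|^2}{|s-t|^2}\,\dd s\,\dd t
=\int_0^\infty\!\!\int_0^\infty
\frac{|\psi(\sigma)-\psi(\tau)|^2}{|\sigma-\tau|^2}\,\dd\sigma\,\dd\tau\;,
\end{equation*}
independent of $\eps$. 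Hence, for any smooth $y\in Y$ that does not vanish at $t=0$, one has $(1-\chi_\eps)y\approx y(0)\,\psi(\cdot/\eps)$ near the origin (with $\psi=1-\chi$), and $\|(1-\chi_\eps)y\|_{H^{\frac12}(\IR_>;H)}$ stays bounded away from zero. Tracing this through your decomposition, it is the ``commutator'' term $(\chi_\eps(t)-\chi_\eps(s))\,y(t)$ that fails: its contribution is $O(1)$, not $o(1)$, both on the near-diagonal and in the tail region. No finer splitting of the double integral will save the argument; the obstruction is structural. The density $\overline{X}^{\|\cdot\|_Y}=Y$ is true, but it rests on the non-trivial fact --- recorded in Remark~\ref{rmk:Hs}(3) and cited there to \cite[Theorem~1.4.2.4]{Grisvard85} --- that $C^\infty_c(\IR_>)$ is dense in $H^{\frac12}(\IR_>)$ at this critical index. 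That proof uses a device other than plain dilation (a Hardy-inequality estimate, or, equivalently, a \emph{logarithmically} dilated cutoff, whose $H^{\frac12}$-seminorm genuinely tends to zero). Once the scalar density is in hand, pairing it with the trivial $L^2(\IR_>;V)$-approximation and a tensor-product density argument yields part (5). You should replace the cutoff computation by a citation of Remark~\ref{rmk:Hs}(3), or, if a self-contained proof is desired, carry out the logarithmic-cutoff estimate.
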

From $A\in \cL(V,V^*)$ it follows that $B:=\rD+A\in \cL(X,Y^*)$.
More precisely, there holds for every $v\in X$,
\begin{align*}
B v &= (\rD+A)v =  \rD v + Av
\in  (H^{\frac12}(\IR_>;H))^* + L^2(\IR_>;V^*) 
\\ &\simeq  (H^{\frac12}(\IR_>;H))^* + L^2(\IR_>;V)^*
\simeq  (H^{\frac12}(\IR_>;H)\cap L^2(\IR_>;V))^*
=  Y^*  \;.
\end{align*}
For any source term $f\in Y^*$, we consider the {\em space-time weak
  formulation} of \eqref{eq:ParProb}, \eqref{eq:ParIC}: find
\be\label{eq:ParaWeak}
u\in X:\quad 
\cB_{\rD+A}(u,v) = F(v) \quad \forall v\in Y
\;.
\ee
Here, the linear functional $F(\cdot)$ is defined by 
\begin{align*}
F(v) = \langle f, v\rangle \quad  \forall v\in  Y 
\end{align*}
with $\langle\cdot,\cdot\rangle$ denoting the $Y^* \times Y $
duality pairing. The bilinear form is given by,
cp.~Lemma~\ref{lem:intbyparts},
\be\label{eq:DefB}
\cB_{\rD+A}(w,v)
:=
\int_{\IR_>} 
\Big\{ (\rD^{\frac12}_+ w , \rD^{\frac12}_- v)_H + a(w,v) \Big\}
\,\dd t\;, 
\quad 
w\in X,\ v \in Y\;,
\ee
where $X$ and $Y$ are as in \eqref{eq:ChoicXY}.  The form
$\cB_{\rD+A}(\cdot,\cdot)$ in \eqref{eq:DefB} is continuous by
Proposition~\ref{prop:Dpm} (1) and (2), stating that for every $w\in
H^{\frac12}_{00,\{0\}}(\IR_>;H)$ we have $\rD^{\frac12}_+w \in
L^2(\IR_>;H)$ and that for every $v\in H^{\frac12}(\IR_>;H)$ we have
$\rD^{\frac12}_-v \in L^2(\IR_>;H)$.

The unique solvability of \eqref{eq:ParaWeak} was proved in
\cite[Sect.~4.1]{Fon09} by extension to a problem over $\IR$, where
coercivity in the sense of Corollary~\ref{coro:X=Y} can be proved, see
Subsection~\ref{sec:TimeCoerc}. As a result of the unique solvability
of \eqref{eq:ParaWeak} we conclude that the inf-sup conditions
\eqref{eq:infsup}, \eqref{eq:injec} hold.  We formulate this in the
following proposition.
\begin{proposition}\label{prop:xtinfsup} 
Suppose that assumption \eqref{eq:AssA} holds. Then,
for the choice \eqref{eq:ChoicXY} of spaces, 
the bilinear form \eqref{eq:DefB} 
satisfies the continuity condition \eqref{eq:BilCont}
and the inf-sup conditions \eqref{eq:infsup}, \eqref{eq:injec}.
In particular, for every $f\in Y^*$ there exists 
a unique solution $u\in X$ of \eqref{eq:ParaWeak}.
\end{proposition}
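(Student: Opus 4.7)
The plan is to assemble three ingredients in order to apply Proposition~\ref{prop:infsup}: continuity of $\cB_{\rD+A}$, the inf-sup condition \eqref{eq:infsup}, and the adjoint injectivity condition \eqref{eq:injec}. Once all three are in place, Proposition~\ref{prop:infsup} delivers both unique solvability of \eqref{eq:ParaWeak} and the stability bound $\|u\|_X\le \gamma^{-1}\|f\|_{Y^*}$.

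Continuity \eqref{eq:BilCont} should be a bookkeeping exercise. For $w\in X$ and $v\in Y$, Proposition~\ref{prop:Dpm}~(1) and (3) give $\|\rD^{\frac12}_+ w\|_{L^2(\IR_>;H)}\le C\|w\|_X$, while (2) and (3) give $\|\rD^{\frac12}_- v\|_{L^2(\IR_>;H)}\le C\|v\|_Y$. Cauchy--Schwarz then dominates the first summand in \eqref{eq:DefB}, and the upper bound in \eqref{eq:AssA} combined with Cauchy--Schwarz in $L^2(\IR_>;V)\times L^2(\IR_>;V)$ dominates the second; the product structure of the norms on $X$ and $Y$ yields the estimate $|\cB_{\rD+A}(w,v)|\le C\|w\|_X\|v\|_Y$.

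For \eqref{eq:infsup}, I would simply repackage the derivation of Subsection~\ref{sec:coercivityR+} as the proof. Given $0\ne w\in X$, set $\tilde w=E_0 w\in Z$ using Proposition~\ref{prop:ExtRest}~(1), and let $\tilde v=\HH^{-\alpha}\tilde w\in Z$ be the coercivity partner produced in Subsection~\ref{sec:TimeCoerc}. Setting $v=R_>\tilde v\in Y$ by Proposition~\ref{prop:ExtRest}~(2), the identity \eqref{eq:a13} together with the norm bounds \eqref{eq:a11}, \eqref{eq:a12} transfers the $\IR$-coercivity \eqref{eq:a14} to the $\IR_>$-inf-sup with an explicit constant $\gamma>0$ depending only on $c$, $c_S$, $\lambda_-$, $\lambda_+$.

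The genuinely delicate piece is \eqref{eq:injec}, where I expect the main obstacle. The pair $(X,Y)$ is \emph{asymmetric}: $X$ encodes the homogeneous initial trace while $Y$ does not, so one cannot simply replay the previous argument with the roles of trial and test interchanged (extending a general $v\in Y$ by zero need not land in $Z$). The cleanest route is to invoke the unique solvability of \eqref{eq:ParaWeak} proved by Fontes in \cite[Sect.~4.1]{Fon09}, which by the converse direction of Proposition~\ref{prop:infsup} automatically forces \eqref{eq:injec} given the inf-sup already established. A self-contained alternative would work with the adjoint equation $-\rD+A$ on $\IR$, using the symmetry of $A$ and the identity $\rD^{\frac12}_-\HH=-\rD^{\frac12}_+$ from Subsection~\ref{sec:TimeCoerc} to run the same coercivity argument with multiplier $\HH^{+\alpha}$; transferring back to $\IR_>$ by a bounded extension of $v\in Y$ to $\tilde v\in Z$ and checking that the resulting partner lies in $X$ (i.e., has vanishing initial trace) is the technical point that would need to be handled via the support-preserving identity \eqref{eq:a13}.
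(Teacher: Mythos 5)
Your proposal is correct and, on the decisive point, matches the paper: the adjoint injectivity \eqref{eq:injec} is obtained by citing Fontes' bijectivity result for $B=\rD+A\in\cL(X,Y^*)$ and invoking the converse direction of Proposition~\ref{prop:infsup}. The paper's own proof is in fact even terser than yours — it simply identifies $X\simeq B^{1,\frac12}_{0,0}(Q_+)$, $Y\simeq B^{1,\frac12}_{0,\cdot}(Q_+)$ in Fontes' notation, cites his bijectivity theorem, and lets Proposition~\ref{prop:infsup} deliver \emph{both} \eqref{eq:infsup} and \eqref{eq:injec} in one stroke, with continuity already noted just before the proposition statement.

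Your decomposition is slightly different in a way that buys something: you establish \eqref{eq:infsup} independently via the extension/restriction argument and the coercivity multiplier $\HH^{-\alpha}$ of Subsections~\ref{sec:TimeCoerc}--\ref{sec:coercivityR+}, so your inf-sup constant $\gamma$ is explicit in terms of $c$, $c_S$, $\lambda_\pm$ rather than imported as an abstract bound from Fontes. You then only need Fontes for the injectivity piece, which you rightly flag as the genuinely delicate one because of the $X$/$Y$ asymmetry (zero-extension works on the trial side but not on the test side). Your sketched self-contained alternative — running the adjoint coercivity with $-\rD$, $\HH^{+\alpha}$, and checking that the resulting multiplier lands in $X$ — is plausible but would need the support-preservation you mention to be worked out; as written it is a correct identification of the obstacle rather than a complete proof, and your decision to fall back on Fontes for this piece is the same decision the paper makes.
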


\begin{proof}
  We observe that $Y \simeq B^{1,\frac12}_{0,\cdot}(Q_+)$ and that $X
  \simeq B^{1,\frac12}_{0,0}(Q_+)$ in the notation of \cite[Thm.~4.3,
  Sect.~4.1]{Fon09} with $p=2$.  It is shown there that the operator
  $B = \rD + A \in \cL(X,Y^*)$ is bijective. Therefore,
  Proposition~\ref{prop:infsup} implies the inf-sup conditions
  \eqref{eq:infsup}, \eqref{eq:injec} for the bilinear form
  \eqref{eq:DefB} on the spaces $X\times Y$ in \eqref{eq:ChoicXY}.
\end{proof}
\section{Sparse tensor Galerkin discretization}
\label{sec:WavGalDisc}
Having established the well-posedness and the unique solvability of
\eqref{eq:ParProb}, \eqref{eq:ParIC} we now turn to Galerkin
approximations. Rather than considering time-stepping (as studied,
e.g., in \cite{Thomee}), we are interested in {\em compressive
  space-time Galerkin} discretizations, as analyzed for the first time
in \cite{ScSt09}.  We present and analyze adaptive, compressive,
space-time schemes which are based on the weak space-time formulation
\eqref{eq:ParaWeak}.  The adaptive, and space-time compressive schemes
inherit, being instances of the general theory in \cite{CDD1,CDD2},
stability from the well-posedness of the infinite-dimensional problem
shown in Proposition~\ref{prop:xtinfsup} and from the stability of the
Riesz bases. As in \cite{ScSt09}, they are based on tensor product
constructions of Riesz bases of $X$ and $Y$; however, the variational
formulation \eqref{eq:ParaWeak} obviates the need for stability of
Riesz bases in negative order Sobolev spaces.
We present classes of spline wavelets in the time domain and also in the
spatial domain $D\subset \IR^{n}$, which we assume to be a polygon or
polyhedron.  Rather than focusing on a particular family of wavelets,
we specify several axioms from \cite{ScSt09} to be satisfied by the
tensorized multiresolution bases in the spatial and temporal domains
in order for our analysis to apply.
We assume that $V$ and $H$ are modeled on Sobolev spaces on the bounded Lipschitz
polyhedron $D\subset \IR^n$, $n\geq 1$. 
As in \cite{ScSt09}, our analysis accommodates two cases:
case \eqref{A}: $n=2,3$ and $D$ is a
bounded polyhedron with plane faces; and the high-dimensional case
\eqref{B}: $n\geq 1$ and $D=(0,1)^n$.  In $D$ we consider
general elliptic operators $A$ of order $2m$, $m \ge 1$.  The generic
example is $A = -\Delta$, $V=H^1_0(D)$, and $H=L^2(D)$, in which case
$m=1$.  The domain for the parabolic initial-boundary value problem is
the space-time cylinder $Q_> := \IR_>\times D$.
\subsection{Space-time wavelet Galerkin discretization}
\label{sec:xtGal}
The Galerkin discretization of the space-time variational formulation
\eqref{eq:ParaWeak} will be based on two dense, nested families
$\{X^\ell\}_{\ell\in\IN_0}$, $\{Y^\ell\}_{\ell\in\IN_0}$ of subspaces
of $X$ and $Y$ as in \eqref{eq:ChoicXY}.  The inf-sup condition
\eqref{eq:infsup} makes it necessary to allow $X^\ell \ne Y^\ell$
(leading in effect to Petrov-Galerkin discretizations), so that
Proposition~\ref{prop:infsup} is used in full generality.  As
indicated above, we choose $\{X^\ell\}_{\ell\in\IN_0}$ as
tensor-products of spaces of continuous, piecewise polynomial
functions of $t\in \IR_>$ and $x\in D$, in order to obtain good
(space-time compressive) approximation of solutions, whereas $Y^\ell$
will be selected to ensure good stability.  Multiresolution bases will
be required to ensure: (a) multilevel preconditioning, i.e., all
stiffness matrices have (generalized) condition numbers, which are
bounded independently of $\ell$; and (b) matrix and (space-time)
solution compression.

Thus, we consider the Galerkin discretization: 
to find, for $\ell \in \IN_0$,
\be\label{eq:ParOpGal}
u^\ell \in X^\ell:\quad 
\cB_{\rD + A}(u^\ell,v^\ell) = F(v^\ell) \quad \forall v^\ell \in Y^\ell 
\;.
\ee
We assume that 
\begin{align*}
N_\ell = {\rm dim}(X^\ell) = {\rm dim}(Y^\ell) <\infty \;,
\end{align*}
such that 
$X^\ell \subset X=H^{\frac12}_{00,\{0\}}(\IR_>;H)\cap
L^2(\IR_>;V)$ and $Y^\ell \subset Y=H^{\frac12}(\IR_>;H)\cap L^2(\IR_>;V)$
are closed and $\cup_{\ell\in\IN} X^\ell$ and 
$\cup_{\ell\in\IN} Y^\ell$ are dense in $X$, respectively in $Y$.
Proposition~\ref{prop:infsup} implies
\begin{proposition}\label{prop:QuasiOpt}
Assume that the Galerkin discretization \eqref{eq:ParOpGal}
of \eqref{eq:ParaWeak} is stable, in the sense that there
exists $\bar{\gamma}$ such that, for all $\ell\in\IN_0$,
\be\label{eq:infsupell}
\inf_{0\ne w\in X^\ell} \sup_{0\ne v \in Y^\ell}
\frac{\cB_{\rD+A}(w,v)}{\| w \|_X \| v\|_Y}  \geq \bar{\gamma} > 0
\;.
\ee
Then, for every $F\in Y^*$ and for every $\ell\in \IN$,
the Galerkin approximation \eqref{eq:ParOpGal} 
admits a unique solution $u^\ell\in X^\ell$.
In particular, the (in general, non-symmetric) stiffness matrix
corresponding to \eqref{eq:ParOpGal} is nonsingular. 
Let $u\in X$ be the corresponding unique solution to
\eqref{eq:ParaWeak} and $C$ be the constant in
\eqref{eq:BilCont}. 
Then there holds the quasi-optimality estimate
\be\label{eq:QuasiOpt}
\| u - u^\ell \|_X 
\leq 
\frac{C}{\bar{\gamma}} 
\inf_{v^\ell \in X^\ell} \| u - v^\ell \|_X
\;.
\ee
\end{proposition}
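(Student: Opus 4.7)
The plan is a standard Babu\v{s}ka-style argument for non-conforming Petrov-Galerkin approximations, adapted to the present setting where \(X^\ell\subset X\), \(Y^\ell\subset Y\), and both subspaces are finite-dimensional of equal dimension \(N_\ell\). I would break the proof into three short steps: unique solvability of \eqref{eq:ParOpGal}, Galerkin orthogonality, and the quasi-optimality bound \eqref{eq:QuasiOpt}.

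First I would establish unique solvability of the discrete problem. Since \(\dim X^\ell = \dim Y^\ell = N_\ell < \infty\), after picking bases the stiffness matrix associated with \(\cB_{\rD+A}(\cdot,\cdot)\) restricted to \(X^\ell\times Y^\ell\) is square. By the rank-nullity theorem, existence for every right-hand side is equivalent to injectivity, and injectivity is precisely the statement that the only \(w\in X^\ell\) with \(\cB_{\rD+A}(w,v)=0\) for all \(v\in Y^\ell\) is \(w=0\). But the discrete inf-sup condition \eqref{eq:infsupell} immediately yields \(\bar\gamma\|w\|_X\le \sup_{0\ne v\in Y^\ell}\cB_{\rD+A}(w,v)/\|v\|_Y = 0\), hence \(w=0\). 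So for every \(F\in Y^*\), \eqref{eq:ParOpGal} has a unique solution \(u^\ell\in X^\ell\), and the stiffness matrix is non-singular.

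Next I would record Galerkin orthogonality. Subtracting \eqref{eq:ParOpGal} from \eqref{eq:ParaWeak} tested with the same \(v^\ell\in Y^\ell\subset Y\) gives
\begin{equation*}
\cB_{\rD+A}(u-u^\ell, v^\ell) = 0 \qquad \forall v^\ell\in Y^\ell.
\end{equation*}

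Finally, for the quasi-optimality estimate, pick any \(w^\ell\in X^\ell\) and write \(u-u^\ell = (u-w^\ell) - (u^\ell-w^\ell)\). Since \(u^\ell-w^\ell\in X^\ell\), the discrete inf-sup condition \eqref{eq:infsupell} gives
\begin{equation*}
\bar\gamma \|u^\ell - w^\ell\|_X \le \sup_{0\ne v^\ell\in Y^\ell} \frac{\cB_{\rD+A}(u^\ell - w^\ell, v^\ell)}{\|v^\ell\|_Y}.
\end{equation*}
Using Galerkin orthogonality, the numerator equals \(\cB_{\rD+A}(u-w^\ell, v^\ell)\), which is bounded by \(C\|u-w^\ell\|_X \|v^\ell\|_Y\) by the continuity \eqref{eq:BilCont}. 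Dividing by \(\|v^\ell\|_Y\) and taking the supremum gives \(\|u^\ell-w^\ell\|_X\le (C/\bar\gamma)\|u-w^\ell\|_X\). The triangle inequality then yields
\begin{equation*}
\|u-u^\ell\|_X \le \|u-w^\ell\|_X + \|u^\ell-w^\ell\|_X \le \bigl(1+C/\bar\gamma\bigr)\|u-w^\ell\|_X,
\end{equation*}
and taking the infimum over \(w^\ell\in X^\ell\) delivers a bound of the form asserted in \eqref{eq:QuasiOpt} (with the stated constant \(C/\bar\gamma\) corresponding to the sharpened Xu-Zikatanov-type estimate; if one prefers the cruder \((1+C/\bar\gamma)\) it suffices for the conclusion). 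No step is genuinely delicate: the only input beyond elementary linear algebra is the boundedness of \(\cB_{\rD+A}\) from Proposition~\ref{prop:xtinfsup} and the discrete inf-sup hypothesis \eqref{eq:infsupell} itself; the main (non)-obstacle is simply to notice that the whole argument proceeds without ever needing the continuous inf-sup from Proposition~\ref{prop:xtinfsup}, because unique solvability of \eqref{eq:ParaWeak} is assumed in the statement.
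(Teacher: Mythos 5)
Your argument is essentially the same as the paper's: unique solvability and non-singularity of the stiffness matrix from the square linear system together with the injectivity forced by \eqref{eq:infsupell}, Galerkin orthogonality, and the discrete inf-sup plus continuity \eqref{eq:BilCont} to bound the error against the best approximation. The one thing to flag is that your explicit triangle-inequality computation yields the constant $1+C/\bar\gamma$, whereas \eqref{eq:QuasiOpt} asserts the sharper $C/\bar\gamma$; the paper obtains this via the Xu--Zikatanov identity $\|I-R^\ell\|_{\cL(X,X)}=\|R^\ell\|_{\cL(X,X)}$ for the Petrov--Galerkin projector $R^\ell\colon u\mapsto u^\ell$ (a non-trivial idempotent on the Hilbert space $X$), which you note parenthetically but do not derive. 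Since the paper likewise defers to the cited reference for that identity, the two proofs are at the same level of completeness and follow the same route.
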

The proof of Proposition~\ref{prop:QuasiOpt} is straightforward:
existence and uniqueness of $u^\ell$ in \eqref{eq:ParOpGal} and the
invertibility of the $N_\ell\times N_\ell$ matrix follows from
\eqref{eq:infsupell} 
with Proposition~\ref{prop:infsup}.  The error estimate
\eqref{eq:QuasiOpt} follows from the Galerkin orthogonality
$$
\cB_{\rD+A}(u-u^\ell,v^\ell) = 0 \qquad \forall v^\ell \in Y^\ell 
\;, 
$$
by noting that the error is $u-u^\ell=(I-R^\ell)(u-v^\ell)$, where
$R^\ell$ is the Ritz projector that maps $u\mapsto u^\ell$.
Therefore, \eqref{eq:QuasiOpt} 
holds with constant
$\|I-R^\ell\|_{\cL(X,X)} =\|R^\ell\|_{\cL(X,X)}\le C/\bar{\gamma}$,
\cite{XuZikatanov03}.

For preconditioning and efficient computation, as well for 
{\em adaptive space-time Galerkin discretizations} with 
optimality properties, the concept of {\em Riesz basis} 
takes a central role.
\subsection{Riesz bases and bi-infinite matrix vector equations}
\label{sec:RieszMVProb}
We assume at hand a Riesz basis
$\Psi^{X}=\{\psi_{\lambda}^{X}:\lambda \in \nabla^{X} \}$ for $X$.
The Riesz basis property amounts to saying that the {\em synthesis operator}
$$
s_{\Psi^{X}}:\ell_2(\nabla^{X}) \rightarrow X:{\bf c} \mapsto {\bf c}^\top 
   \Psi^{X} := \sum_{\lambda \in \nabla^{X}} c_{\lambda} \psi_{\lambda}^{X}
$$
is boundedly invertible. 
Its adjoint, known as the {\em analysis operator}, reads
$$
s_{\Psi^{X}}':X^* \rightarrow \ell_2(\nabla^{X}):
g\mapsto [g(\psi_{\lambda}^{X})]_{\lambda \in \nabla^{X}}
\;.
$$
Similarly, let 
$\Psi^{Y} = \{\psi_{\lambda}^{Y}:\lambda \in \nabla^{Y}\}$ 
denote a Riesz basis for $Y$, with synthesis operator $s_{\Psi^{Y}}$ 
and adjoint $s_{\Psi^{Y}}'$. 
Ahead, we construct Riesz bases $\Psi^{X}$ and $\Psi^{Y}$ by 
tensorization of wavelet bases in $\IR_>$ and in $D$.

By Proposition~\ref{prop:xtinfsup},
$B=\rD+A \in \cL(X,Y^*)$ is boundedly invertible
with the choice of spaces in \eqref{eq:ChoicXY}.
We may write \eqref{eq:ParaWeak}
equivalently as operator equation:
given $f \in Y^*$, 
find
\be\label{eq:OpEq}
u\in X:\quad B u  = f \;\;\mbox{in}\;\; Y^*
\;.
\ee
Writing $u=s_{\Psi^{X}} {\bf u}$, 
\eqref{eq:ParaWeak} and \eqref{eq:OpEq} are
equivalent to the bi-infinite matrix vector problem
\begin{equation} \label{mv}
{\bf B} {\bf u}={\bf f}\;,
\end{equation}
where 
${\bf f}
=
s'_{\Psi^{Y}} f 
=[f(\psi_{\lambda}^{Y})]_{\lambda \in \nabla^{Y}} \in \ell_2(\nabla^{Y})$\;, 
and where the {\em ``stiffness''} or {\em system matrix}
$$
\matr{B} =
s'_{\Psi^{Y}} B s_{\Psi^{X}} 
= [(B \psi_{\mu}^{X})(\psi_{\lambda}^{Y})]_{\lambda \in \nabla^{Y}, 
\mu \in \nabla^{X}}  \in \cL(\ell_2(\nabla^{X}),\ell_2(\nabla^{Y}))
$$
is boundedly invertible.  
We may write
$$
\cB_{\rD + A}: 
X \times Y \rightarrow \IR:(w,v) \mapsto (B w)(v)\;,
$$
and we also use the notations
$$
{\bf B} = \cB_{\rD+A}(\Psi^{X},\Psi^{Y}) 
\quad \mbox{and} \quad {\bf f}=f(\Psi^{Y})
\;.
$$
With the Riesz constants
\begin{align*}
\Lambda_{\Psi^{X}}^{X}
:=
\|s_{\Psi^{X}}\|_{\cL(\ell_2(\nabla^{X}), X)} 
&=\sup_{0\neq {\bf c} \in \ell_2(\nabla^{X})} 
\frac{\|{\bf c}^\top \Psi^{X}\|_{X}}{\|{\bf c}\|_{\ell_2(\nabla^{X})}}\;,
\\
\lambda_{\Psi^{X}}^{X}
:=
\|s^{-1}_{\Psi^{X}}\|_{\cL(X,\ell_2(\nabla^{X}))}^{-1}
& =\inf_{0\neq {\bf c} \in \ell_2(\nabla^{X})} 
\frac{\|{\bf c}^\top \Psi^{X}\|_{X}}{\|{\bf c}\|_{\ell_2(\nabla^{X})}}\;,
\end{align*}
and analogous constants 
$\Lambda_{\Psi^{Y}}^{Y}$ and $\lambda_{\Psi^{Y}}^{Y}$,
the bounded invertibility of $B \in \cL(X,Y^*)$ implies that
the condition number of $\matr{B}$ is finite, i.e., 
\begin{align*} 
\|{\bf B}\|_{\cL(\ell_2(\nabla^{X}), \ell_2(\nabla^{Y}))} 
& \leq  \|B\|_{\cL(X, Y^*)} \Lambda_{\Psi^{X}}^{X} \Lambda_{\Psi^{Y}}^{Y}\;,
\\
\|{\bf B}^{-1}\|_{\cL(\ell_2(\nabla^{Y}), \ell_2(\nabla^{X}))} 
& \leq 
\frac{\|B^{-1}\|_{\cL(Y^* , X)}}{\lambda_{\Psi^{X}}^{X} \lambda_{\Psi^{Y}}^{Y}} 
\;.
\end{align*}
We next construct {\em Riesz bases} of the spaces $X$ and $Y$ in
\eqref{eq:ChoicXY}.
\subsection{Riesz bases in $H^{\frac12}_{00,\{0\}}(\IR_>)$ and $H^{\frac12}(\IR_>)$}
\label{sec:WavR+}
We assume at our disposal
two countable collections $\Theta^X, \Theta^Y \subset H^1(\IR_>)$ of
functions such that
$$
\Theta^X=\{\theta^X_{\lambda}: \lambda \in \nabla^X_t\} \subset H^1_{0,\{0\}}(\IR_>)
$$
is a 
{\em normalized Riesz basis for $L^2(\IR_>)$ which, when 
renormalized in $H^1(\IR_>)$, is a Riesz basis for  $H^1_{0,\{0\}}(\IR_>)$.}
Analogously, we assume available
$\Theta^Y = \{\theta^Y_{\lambda}: \lambda \in \nabla^Y_t\} \subset H^1(\IR_>)$, 
a Riesz basis of $L^2(\IR_>)$ which,
when renormalized in $H^1(\IR_>)$, is a Riesz basis for $H^1(\IR_>)$.

From Proposition~\ref{prop:Interp00} 
we obtain the following result.
\begin{proposition}\label{prop:RieszTheta}
Assume given two collections $\Theta^X$ and $\Theta^Y$ with the 
above properties. 
Then, for $0\leq s \leq 1$, the collections
$[\Theta^X]_s$ and $[\Theta^Y]_s$, which are obtained by 
rescaling $\Theta^X$ and $\Theta^Y$ by 
$\{ 2^{s|\lambda|}: \lambda\in \nabla_t\}$,
(e.g., $[\Theta^X]_s = \{ 2^{s|\lambda|} \theta^X_\lambda: \lambda \in \nabla_t^X \}$)
are Riesz bases of $[L^2(\IR_>),H^1_{0,\{0\}}(\IR_>)]_s$ 
and of $[L^2(\IR_>),H^1(\IR_>)]_s$, respectively.
In particular, for $s=\frac12$, 
$[\Theta^X]_{\frac12}$ is a Riesz basis for $H^{\frac12}_{00,\{0\}}(\IR_>)$
and
$[\Theta^Y]_{\frac12}$ is a Riesz basis for $H^{\frac12}(\IR_>)$.
\end{proposition}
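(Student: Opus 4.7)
The plan is to recast the Riesz basis hypotheses as bounded invertibility of the synthesis operators between weighted $\ell_2$ sequence spaces and the corresponding function spaces, and then transport these isomorphisms to intermediate scales by interpolation of operators between Hilbert spaces.

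For $s\in[0,1]$, introduce
\begin{align*}
\ell_2^s(\nabla^X_t) := \Big\{ \mathbf{c}=(c_\lambda)_\lambda : \sum_{\lambda\in \nabla^X_t} 2^{2s|\lambda|}|c_\lambda|^2 < \infty\Big\},
\end{align*}
and analogously on $\nabla^Y_t$, and let $S^X\colon \mathbf{c}\mapsto \sum_\lambda c_\lambda \theta^X_\lambda$ denote the synthesis map. The hypothesis that $\Theta^X$ is a normalized Riesz basis of $L^2(\IR_>)$ is equivalent to $S^X\colon \ell_2^0 \to L^2(\IR_>)$ being an isomorphism. The hypothesis that $\Theta^X$, renormalized in $H^1$, is a Riesz basis of $H^1_{0,\{0\}}(\IR_>)$ amounts, using the standard multilevel scaling $\|\theta^X_\lambda\|_{H^1}\simeq 2^{|\lambda|}$ and a diagonal change of variables on the sequence side, to $S^X\colon \ell_2^1\to H^1_{0,\{0\}}(\IR_>)$ being an isomorphism. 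The analogous conclusions hold for $S^Y$ with $H^1(\IR_>)$ replacing $H^1_{0,\{0\}}(\IR_>)$.

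Next, apply the interpolation theorem for bounded operators between pairs of Hilbert spaces (see, e.g., \cite[Chap.~1, Thm.~5.1]{LM1}) to $S^X$ and $(S^X)^{-1}$ simultaneously: both are bounded between the pairs $(\ell_2^0,L^2(\IR_>))$ and $(\ell_2^1, H^1_{0,\{0\}}(\IR_>))$ in the respective directions, hence also between the interpolated pairs. Thus $S^X$ induces an isomorphism
\begin{align*}
[\ell_2^0(\nabla^X_t), \ell_2^1(\nabla^X_t)]_s \;\longrightarrow\; [L^2(\IR_>), H^1_{0,\{0\}}(\IR_>)]_s, \qquad s\in [0,1],
\end{align*}
and analogously $S^Y$ maps isomorphically onto $[L^2(\IR_>), H^1(\IR_>)]_s$. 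Identifying $[\ell_2^0(\nabla_t), \ell_2^1(\nabla_t)]_s = \ell_2^s(\nabla_t)$ with equivalent norms (the classical Hilbert-scale identity for weighted $\ell_2$ spaces, via the spectral representation of the diagonal multiplier with eigenvalues $2^{|\lambda|}$), and absorbing the weight into the basis by the rescaling $c_\lambda \leftrightarrow 2^{s|\lambda|} c_\lambda$, I conclude that $[\Theta^X]_s = \{2^{s|\lambda|}\theta^X_\lambda\}$ is a Riesz basis of $[L^2(\IR_>), H^1_{0,\{0\}}(\IR_>)]_s$ and $[\Theta^Y]_s$ is a Riesz basis of $[L^2(\IR_>), H^1(\IR_>)]_s$. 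The case $s=\tfrac12$ then yields the final claims in view of the definitions of $H^{1/2}_{00,\{0\}}(\IR_>)$ and $H^{1/2}(\IR_>)$ given in Section~\ref{sec:Interp}.

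The main---and essentially only---obstacle is the bookkeeping that the normalization conventions in the hypothesis are compatible with the weight exponents $2^{|\lambda|}$ appearing in the sequence-space norms; this reduces to the standard norm equivalence $\|\theta_\lambda\|_{H^1}\simeq 2^{|\lambda|}\|\theta_\lambda\|_{L^2}$ for multilevel bases at level $|\lambda|$, after which nothing beyond \cite{LM1} for interpolation of Hilbert-space operators and for the interpolation identity on weighted $\ell_2$ is required.
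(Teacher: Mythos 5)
Your proof is correct in substance and follows the standard interpolation argument that the paper (which does not write out a proof but simply asserts the result ``from Proposition~3.3'') clearly has in mind: encode the Riesz basis hypotheses as bounded invertibility of the synthesis operator between weighted $\ell_2$ spaces and the corresponding endpoint Sobolev spaces, interpolate the operator and its inverse, and use the exact interpolation identity $[\ell_2^0,\ell_2^1]_s=\ell_2^s$ for diagonal weights.

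One small bookkeeping point is worth flagging. Under the paper's normalization conventions (t1)--(t3), one has $\|\theta_\lambda\|_{L^2}\simeq 1$ and $\|\theta_\lambda\|_{H^1}\simeq 2^{|\lambda|}$, so the synthesis map satisfies $\|\sum_\lambda c_\lambda\theta_\lambda\|_{[L^2,H^1_{0,\{0\}}]_s}\simeq(\sum_\lambda 2^{2s|\lambda|}|c_\lambda|^2)^{1/2}$. Substituting $c_\lambda = 2^{-s|\lambda|}e_\lambda$, the $\ell_2$-Riesz-normalized basis of $[L^2,H^1_{0,\{0\}}]_s$ comes out as $\{2^{-s|\lambda|}\theta_\lambda\}$, not $\{2^{+s|\lambda|}\theta_\lambda\}$. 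Your final ``absorbing the weight by $c_\lambda\leftrightarrow 2^{s|\lambda|}c_\lambda$'' is phrased ambiguously enough to mask this, and the sign in your conclusion inherits the same sign as the statement of the proposition in the paper; you should check whether the exponent in the proposition should actually be $-s|\lambda|$, or equivalently spell out the ``absorption'' step so that the sign is unambiguous. This does not affect the validity of the interpolation argument itself.
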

We denote by $\theta^X_\lambda$ elements of the collection 
$\Theta^X$ and, likewise, by $\theta^Y_\lambda$ elements of $\Theta^Y$.
Further assumptions on the bases 
$\Theta^X$, $\Theta^Y$ are as in \cite{ScSt09}: 
denoting by $\theta_{\lambda}$ a generic element in either
of the collections $\Theta^X$ and $\Theta^Y$, we require
the $\theta_\lambda$ to be 
\begin{enumerate} 
\renewcommand{\theenumi}{t\arabic{enumi}}
\renewcommand{\labelenumi}{(t\arabic{enumi})}
\item \label{t1}
{\em local}: that is, 
$\sup_{t \in \IR_>, \ell \in \N_0} 
  \#\{\lambda : |\lambda|=\ell,\ t \in {\rm supp}\, \theta_{\lambda}\} <\infty$ 
and 
$|{\rm supp}\,\theta_{\lambda}| \lesssim 2^{-|\lambda|}$,
\item \label{t2}
{\em piecewise polynomial of order $d_t$}: 
here, ``piecewise'' means that
the singular support consists of a finite number of points 
whose number is uniformly bounded with respect to $|\lambda|$,
\item \label{t3}
{\em globally  continuous}: specifically, 
$\|\theta_{\lambda}\|_{W_{\infty}^k(\IR_>)} 
 \lesssim  2^{|\lambda|(\frac{1}{2}+k)}$ for $k \in \{0,1\}$,
\item \label{t4}
{\em vanishing moments}:
for $|\lambda|>0$, the $\theta_{\lambda}$ have 
$\tilde{d}_t \geq d_t$ vanishing moments.
\end{enumerate}
Properties \eqref{t1}--\eqref{t4} are assumed to hold for both 
$\Theta^X$ and $\Theta^Y$.
We remark that property \eqref{t3}, global continuity, is necessary
to ensure $H^{\frac12}(\IR_>)$-conformity, even though $H^{\frac12}(\IR_>)$
is not embedded into $C^0(\overline{\IR_>})$.

Properties \eqref{t1}--\eqref{t4} can be satisfied by collections
$\Theta^X$, $\Theta^Y$ that are continuous, piecewise polynomial
wavelet bases on dyadic refinements of $\IR_>$, which are of {\em order
  $d_t>1$}.  For $k \in \N_0$ we denote by $\nabla_t^{(k)}$ the set of
$\lambda \in \nabla_t$ with {\em refinement level} $|\lambda| \leq k$.
It holds that $\# \nabla_t^{(k)} \eqsim 2^k$.  Setting also
$\nabla_t^{(-1)}:=\emptyset$, we define the biorthogonal projector
$Q^X_{k,t} := Q^X_{\nabla_t^{(k)}}$ by
\begin{align*}
  Q^X_{k,t}v=\sum_{\lambda\in \nabla_t^{(k)X}} \langle
  v,\theta_\lambda'^X\rangle \theta_\lambda^X \;, 
\end{align*}
where $\Theta'^{X}$ denotes the dual basis, and analogously for
$Q^Y_{k,t} := Q^Y_{\nabla_t^{(k)}}$. 
We have
\begin{align*}
\|\mathrm{Id}-Q^X_{k,t}\|_{\cL(H^{d_t}(\IR_>), L^{2}(\IR_>))}
\lesssim 2^{-k d_t}\;,
\quad 
\|\mathrm{Id}-Q^X_{k,t}\|_{\cL(H^{d_t}(\IR_>), H^{\frac12}_{00,\{0\}}(\IR_>))} 
\lesssim 2^{-k (d_t - \frac12)} 
\end{align*}
and analogously for $ Q^Y_{k,t}$ with 
$H^{\frac12}(\IR_>)$ in place of $H^{\frac12}_{00,\{0\}}(\IR_>)$.
 
{\em Constructions} of 
compactly supported spline wavelet 
systems $\Theta$ on $(-1,1)$ and on $\IR$, 
as well as {\em direct constructions} 
(i.e., not based on antisymmetry) 
of Riesz bases $\Theta^X$ and $\Theta^Y$ 
on $\IR_>$ satisfying properties \eqref{t1}--\eqref{t4}  
with $\theta^X_\lambda(t)|_{t=0} = 0$ 
are available, for example,
in \cite{ChegSt11,DKU99,DijCSSt09,Urb09}  
and the references there.
\subsection{Riesz bases in $H$ and $V$}
\label{sec:WavD}
With $H=L^2(D)$ and the assumption 
that $V$ coincides with a closed subspace (supporting homogeneous
essential boundary conditions) of the Sobolev space $H^m(D)$ 
for some $m > 0$, we assume at our disposal a Riesz basis
$$
\Sigma=\{\sigma_{\lambda}: \lambda \in \nabla_x\} \subset V
\;.
$$
Specifically, $\Sigma$ is a collection of functions that is a 
{\em normalized Riesz basis for $H$ which, upon renormalization 
in $V$, is a Riesz basis denoted $[\Sigma]_V$ for $V$}. 
Riesz bases of divergence-free functions in the context 
of Example~\ref{ex:Expl2} are constructed in 
\cite{StevDiv0,Urb09} and the references there.
For the spatial wavelet basis $\Sigma$, we 
consider as in \cite{ScSt09}, two cases: 
\begin{enumerate}
\renewcommand{\theenumi}{\Alph{enumi}}
\renewcommand{\labelenumi}{(\Alph{enumi})}
\item \label{A} 
it is a wavelet basis of order $d_x>m$ 
with isotropic supports
constructed from a dyadic multiresolution analysis in 
$L^2(D)$, 
\item \label{B}
$D = (0,1)^n$ and $\Sigma$ is the tensor product 
of (possibly different) 
univariate wavelet bases $\Sigma_i$ as in \eqref{A} 
in each of the coordinate spaces.
\end{enumerate}
In case \eqref{A}, for some sufficiently large $K$ depending on $m$,
where $2m$ is the order of $A$, and for some 
$r_x\in\N_0 $ such that  $m-1 \leq r_x \leq d_x-2$ and $\tilde{d}_x \in \N_0$,
we will assume that the $\sigma_{\lambda}$ are 
\begin{enumerate} \renewcommand{\theenumi}{s\arabic{enumi}}
\renewcommand{\labelenumi}{(s\arabic{enumi})}
\item \label{s1} {\em local} and {\em piecewise smooth}: 
for any $\ell \in \N_0$ there exist collections 
$\{D_{\ell,v}: v \in {\mathcal O}_{\ell}\}$ of disjoint, 
uniformly shape regular, open subdomains such that 
$\overline{D}
=
\cup_{v \in {\mathcal O}_{\ell}} \overline{D_{\ell,v}}$, 
$\overline{D_{\ell,v}}$ is the union of some $\overline{D_{\ell+1,\tilde{v}}}$,
${\rm diam}(D_{\ell,v}) \eqsim 2^{-\ell}$, 
${\rm supp}\, \sigma_{\lambda}$ is connected and is the union of 
a uniformly bounded number of 
$\overline{D_{|\lambda|,v}}$, each $\overline{D_{\ell,v}}$ 
has non-empty intersection with the supports of a uniformly bounded number 
of $\sigma_{\lambda}$ with $|\lambda|=\ell$, and, for $k \in \{0,K\}$, 
$$
\|\sigma_{\lambda}\|_{W^k_{\infty}(D_{|\lambda|,v}
)} \lesssim 2^{|\lambda|(\frac{n}{2}+k)}\;,
$$
\item \label{s2} 
{\em globally $C^{r_x}$}: 
specifically, $\|\sigma_{\lambda}\|_{W_{\infty}^k(D)} 
\lesssim  
2^{|\lambda|(\frac{n}{2}+k)}$ for $k \in \{0,r_x+1\}$,
\item \label{s3} 
for $|\lambda|>0$, have {\em cancellation properties of order $\tilde{d}_x$}:
\begin{equation*} 
\Big|\int_{D} w \sigma_{\lambda}\Big| 
\lesssim 
2^{-|\lambda|(\frac{n}{2}+k)} \|w\|_{W_{\infty}^k(D)}\  
\text{ for }k\in \{0,\tilde{d}_x\},\,w \in W_{\infty}^k(D) \cap V
\;.
\end{equation*}
\item \label{s4} In addition to \eqref{s1}, 
we assume that for any $\ell$ and $v \in {\mathcal O}_{\ell}$, 
there exists a sufficiently smooth transformation of coordinates $\kappa$, 
with derivatives bounded uniformly in $\ell$ and $v$, 
such that for all $|\lambda|=\ell$, 
$(\sigma_{\lambda} \circ \kappa)|_{\kappa^{-1}(D_{\ell,v})}$ 
is a polynomial of some fixed degree.
\end{enumerate}

For case \eqref{B}, we assume that each of the $\Sigma_i$ satisfies
the above conditions with $(D,n)=((0,1),1)$.  In this case, we assume
that the wavelets are {\em piecewise polynomials of order $d_x$}, with
those on positive levels being orthogonal to all polynomials of order
$\tilde{d}_x$ that are in $V$. 
\begin{assumption} \label{asmp:s*compress}
The bi-infinite matrices
$\matr{M} = (\Sigma,\Sigma)_H$ and $\matr{A} = a([\Sigma]_V,[\Sigma]_V)$
for the spatial operators in \eqref{eq:GalMat} are
$s^*$ computable, in the sense that for each $N\in \IN$, there
exist approximate matrices $\matr{M}_N$ and $\matr{A}_N$
with at most $N$ non-zero entries in each column and such that,
for every $0\leq \bar{s} < s^*$, the expressions
$$
\sup_{N\in \IN} N \| \matr{M} - \matr{M}_N \|^{1/\bar{s}}\;, 
\quad 
\sup_{N\in \IN} N \| \matr{A} - \matr{A}_N \|^{1/\bar{s}}
$$
are finite.  Here, $\| \cdot \|$ denotes the spectral norm.
\end{assumption}
A number of practically viable constructions of Riesz bases $\Sigma$,
which satisfy Assumption~\ref{asmp:s*compress} for several classes of
operators $A\in \cL(V,V^*)$ have become available in recent years: for
example, for second order, elliptic divergence form differential
operators $A$, and also for self-adjoint, integro-differential
operators $A$ of fractional order (in which case $V$ coincides with
the domain of $A^{\frac12}$); also tensorized $\Sigma$ for diffusions on
$D=(0,1)^{n}$ have become available, which satisfy
Assumption~\ref{asmp:s*compress}.  We refer to \cite[Sect.~8.3]{ScSt09}
for this.  For $0\leq s\leq 1$, we denote by $[\Sigma]_{s}$ the Riesz
basis $\Sigma$ rescaled to $[H,V]_{s}$.
\subsection{Riesz bases in $X$ and $Y$}
\label{sec:RieszInXY}
We assume that we have at our disposal Riesz bases
$\Theta^X = \{\theta^X_\lambda : \lambda \in \nabla^X_t \}$, $\Theta^Y
= \{\theta^Y_\lambda : \lambda \in \nabla^Y_t \}$ of $L^2(\IR_>)$ for
which rescaling renders $\Theta^X$ a Riesz basis of
$H^1_{0,\{0\}}(\IR_>)$ and $\Theta^Y$ a Riesz basis of $H^1(\IR_>)$.  The
bases $[\Theta^X]_{\frac12}$ and $[\Theta^Y]_{\frac12}$ are then defined as in
Proposition~\ref{prop:RieszTheta}.  In the spatial domain $D$, we
assume available a Riesz basis $\Sigma = \{ \sigma_\lambda : \lambda
\in \nabla_x \}$ of $H$ which, when rescaled to $V$, becomes a Riesz
basis $[\Sigma]_V$ for $V$: $[\Sigma]_V = \{ \sigma_\lambda / \|
\sigma_\lambda \|_V : \lambda \in \nabla_x \}$.
\begin{proposition}\label{prop:RieszInX=Y}
Given Riesz bases
$\Theta^X$, $\Theta^Y$ and $\Sigma$ of 
$L^{2}(\IR_>)$ and $H$,
respectively, as above, the collections
$\Psi^X := \Theta^X\otimes \Sigma$ ,
$\Psi^Y := \Theta^Y\otimes \Sigma$ ,
are Riesz bases of 
$L^2(\IR_>;H) \simeq L^2(\IR_>)\otimes H$.
Moreover, the collection
\begin{align*}
\Psi^X
:= 
\left\{ 
(t,x)\mapsto 
\frac{\theta^X_\lambda(t)\sigma_\mu(x)}
{\sqrt{\| \sigma_\mu \|_V^2 + \| \theta^X_\lambda \|^2_{H^{\frac12}_{00,\{0\}}(\IR_>)}
}
}
\;
:
(\lambda,\mu)\in \nabla^X := \nabla_t^X \times \nabla_x 
\right\}
\end{align*}
is a Riesz basis for 
$X=H^{\frac12}_{00,\{0\}}(\IR_>;H)\cap L^2(\IR_>;V)$,
and the collection
\begin{align*}
\Psi^Y
:= 
\left\{ 
(t,x)\mapsto 
\frac{\theta^Y_\lambda(t)\sigma_\mu(x)}
{\sqrt{\| \sigma_\mu \|_V^2 + \| \theta^Y_\lambda \|^2_{H^{\frac12}(\IR_>)}
}
}
\;
:
(\lambda,\mu)\in \nabla^Y := \nabla_t^Y \times \nabla_x 
\right\}
\end{align*}
is a Riesz basis for $Y=H^{\frac12}(\IR_>;H)\cap L^2(\IR_>;V)$.

The Riesz constants for $\Psi^X$ and $\Psi^Y$
depend only on the respective Riesz constants 
for 
$\Theta^X$, $[\Theta^X]_{\frac12}$,
$\Theta^Y$, $[\Theta^Y]_{\frac12}$ 
and for $\Sigma$, $[\Sigma]_V$.
\end{proposition}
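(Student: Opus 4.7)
The plan is to assemble the claim in two standard steps: first, a tensor-product step in which Riesz bases of the univariate factor spaces are tensorized to produce Riesz bases of the corresponding Bochner spaces; second, an intersection step in which the resulting two Riesz-basis equivalences for the same index set, but with different rescalings, are summed to give a single Riesz basis for the intersection space.

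First I would invoke the classical fact that, for separable Hilbert spaces $E$ and $F$, the Hilbert tensor product $E\otimes F$ has as a (unit-norm) Riesz basis the tensor product of any (unit-norm) Riesz bases of $E$ and $F$, with Riesz constants equal to the products of the individual ones. Combined with the identifications in \eqref{eq:ChoicXY}, this yields immediately that $\Theta^X\otimes\Sigma$ is a Riesz basis of $L^2(\IR_>;H)\simeq L^2(\IR_>)\otimes H$. Applying the same fact to the pair $[\Theta^X]_{\frac12}$, which by Proposition~\ref{prop:RieszTheta} is a Riesz basis of $H^{\frac12}_{00,\{0\}}(\IR_>)$, together with $\Sigma$ (Riesz basis of $H$), and then reverting to the original $\theta^X_\lambda$, shows that $\{\theta^X_\lambda\sigma_\mu/\|\theta^X_\lambda\|_{H^{\frac12}_{00,\{0\}}(\IR_>)}\}$ is a Riesz basis of $H^{\frac12}_{00,\{0\}}(\IR_>;H)$. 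Pairing instead $\Theta^X$ with $[\Sigma]_V$, the same reasoning gives that $\{\theta^X_\lambda\sigma_\mu/\|\sigma_\mu\|_V\}$ is a Riesz basis of $L^2(\IR_>;V)$.

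For the intersection step, write $\psi_{\lambda,\mu}:=\theta^X_\lambda\sigma_\mu$. The two preceding statements read
\begin{align*}
\Big\|\sum_{\lambda,\mu}c_{\lambda,\mu}\psi_{\lambda,\mu}\Big\|^2_{H^{\frac12}_{00,\{0\}}(\IR_>;H)}
&\simeq \sum_{\lambda,\mu}|c_{\lambda,\mu}|^2\,\|\theta^X_\lambda\|^2_{H^{\frac12}_{00,\{0\}}(\IR_>)}, \\
\Big\|\sum_{\lambda,\mu}c_{\lambda,\mu}\psi_{\lambda,\mu}\Big\|^2_{L^2(\IR_>;V)}
&\simeq \sum_{\lambda,\mu}|c_{\lambda,\mu}|^2\,\|\sigma_\mu\|_V^2,
\end{align*}
with constants depending only on the Riesz constants of the factor bases. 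Summing the two equivalences and recalling from \eqref{eq:ChoicXY} that $\|\cdot\|_X^2=\|\cdot\|_{H^{\frac12}_{00,\{0\}}(\IR_>;H)}^2+\|\cdot\|_{L^2(\IR_>;V)}^2$ gives
\begin{align*}
\Big\|\sum_{\lambda,\mu}c_{\lambda,\mu}\psi_{\lambda,\mu}\Big\|^2_X
\simeq \sum_{\lambda,\mu}|c_{\lambda,\mu}|^2\bigl(\|\sigma_\mu\|_V^2+\|\theta^X_\lambda\|^2_{H^{\frac12}_{00,\{0\}}(\IR_>)}\bigr),
\end{align*}
which is precisely the statement that $\Psi^X$ as defined in the proposition is a Riesz basis of $X$. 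The same computation with $\Theta^X$ replaced by $\Theta^Y$ and $H^{\frac12}_{00,\{0\}}(\IR_>)$ by $H^{\frac12}(\IR_>)$, using the second half of Proposition~\ref{prop:RieszTheta}, yields the statement for $\Psi^Y$ and $Y$.

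The proof is essentially mechanical; no genuine obstacle arises. The only points requiring a little care are the canonical identification of the Bochner spaces in \eqref{eq:ChoicXY} with Hilbert tensor products under the unique cross-norm (standard for separable Hilbert spaces), and the bookkeeping of Riesz constants through the two elementary operations used: these are multiplicative in the tensorization step and additive in the intersection step, so the final Riesz constants for $\Psi^X$ and $\Psi^Y$ depend only on those of $\Theta^X,[\Theta^X]_{\frac12},\Theta^Y,[\Theta^Y]_{\frac12},\Sigma,[\Sigma]_V$, as asserted.
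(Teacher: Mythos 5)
Your proof is correct and takes essentially the same route as the paper: the paper's one-line proof invokes the tensor-product Riesz-basis results of \cite[Prop.~1, Prop.~2]{GO95} together with Proposition~\ref{prop:RieszTheta}, which is precisely the tensorization step plus the intersection (summed norm equivalences) step that you spell out explicitly. Your bookkeeping of the Riesz constants and the final rescaling by $\sqrt{\|\sigma_\mu\|_V^2+\|\theta^X_\lambda\|^2_{H^{\frac12}_{00,\{0\}}(\IR_>)}}$ matches the statement.
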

\begin{proof} 
The Riesz basis property for $\Psi$
follows from our assumptions on $\Theta$ and $\Sigma$,
the result \cite[Prop.~1, Prop.~2]{GO95} on tensor products of Riesz bases
and from Proposition~\ref{prop:RieszTheta}.
\end{proof}

\subsection{Space-time compressible approximation rates of smooth solutions}
\label{sec:XYchoice}
Using the tensor product Riesz bases $\Psi^X$ of $X$ in
Proposition~\ref{prop:RieszInX=Y} in a Petrov-Galerkin discretization
\eqref{eq:ParOpGal} of the space-time variational formulation
\eqref{eq:ParaWeak} allows for space-time compressive approximations
of smooth solutions, {\em provided} test function spaces $Y^\ell$ are
available which are {\em stable}, i.e., which satisfy
\eqref{eq:infsupell}.
The approximate solutions thus obtained will be quasi-optimal.  Such
stable test spaces can be constructed on the basis of the coercivity
property in Subsection~\ref{sec:coercivityR+}. However, we shall not
develop this here but refer to \cite[Chapt.~5]{FDiss}.  Likewise, in
the adaptive setting, sequences of approximate solutions are produced,
which converge at best possible rates, when compared to best $N$-term
approximations of the solution.  We therefore exemplify the best
possible approximation rates in $X$ which can be achieved
in terms of the parameters $d_t$ and $d_x$.
\subsubsection{Best rate in case \eqref{A}} 
For any $\Lambda \subset \nabla_x$, let $Q_{\Lambda}:L^2(D) \to
\spann(\theta_\lambda:\lambda\in\Lambda)$ denote the
$L^2(D)$-biorthogonal projector associated to $\Sigma$ and $\Lambda$.
The assumption of $\Sigma$ being of order $d_x$ means that, with
$\nabla_x^{(k)}$ being the set of $\lambda \in \nabla_x$ with
refinement level $|\lambda| \leq k \in \N_0$, it holds that $\#
\nabla_x^{(k)} \eqsim 2^{k n}$.  Setting $\nabla_x^{(-1)}:=\emptyset$,
we obtain for the projector $Q_{k,x}:=Q_{\nabla_x^{(k)}}$ that
$$
\|\mathrm{Id}-Q_{k,x}\|_{\cL(H^{d_x}(D) \cap V , V)} \lesssim 2^{-k (d_x-m)},\,
\;\;
\|\mathrm{Id}-Q_{k,x}\|_{\cL(H^{d_x}(D) \cap V , H)} \lesssim 2^{-k d_x}
\;.
$$
In case $d_t <\frac{d_x-m}{n}$,
with
$\ell/k \in [\frac{d_t}{d_x-m}+\eps,\frac{1}{n}-\eps]$ 
for (small) $\eps > 0$, 
we have
$$
\Big\|
\mathrm{Id} - \sum_{p=0}^k \sum_{q=0}^{\ell} 
(Q_{p,t}-Q_{p-1,t}) \otimes (Q_{q,x}-Q_{q-1,x})
\Big\|_{\cL(H^{d_t}(\IR_>) \otimes (H^{d_x}(D) \cap V), 
L^2(\IR_>)\otimes V)} \lesssim 2^{-k d_t}
\;.
$$
Here
$\sum_{p=0}^k \sum_{q=0}^{\ell} (Q_{p,t}-Q_{p-1,t}) \otimes (Q_{q,x}-Q_{q-1,x})$ 
is the  $L^2(D)$-biorthogonal projector associated to the 
tensor product basis $\Psi=\Theta \otimes \Sigma$ and the 
``sparse'' tensor-product index set 
\begin{align*}
\Lambda_{\mathrm{A}}
:= 
\cup_{p=0}^k \cup_{q=0}^{\ell} 
(\nabla_t^{(p)} \backslash \nabla_t^{(p-1)}) 
\times (\nabla_x^{(q)} \backslash \nabla_x^{(q-1)})\;,
\end{align*}
which satisfies $\#(\Lambda_{\mathrm{A}}) \lesssim 2^k$, see \cite{GH13}. 

In view of the approximation orders of the bases being applied, 
and the tensor product structure of 
$X = H^{\frac12}_{00,\{0\}}(\IR_>;H) \cap L^2(\IR_>;V)$, 
by interpolation we obtain the rate
\be\label{eq:rateSpTt}
2^{-k[\min(d_t-\frac12,{\textstyle \frac{d_x-m}{n}})- \eps]}
\;. 
\ee
with $\eps>0$ arbitrarily small due to the appearance of logarithmic
factors.  This rate is best possible for functions which are smooth
with respect to $x$ and $t$, and for Riesz bases $\Sigma$ with
isotropic supports in $D$ as are admitted in case \eqref{A}.
\subsubsection{Best rate in case \eqref{B}} 
Throughout the discussion of case \eqref{B}, we assume  $n\geq 2$
(the case $n=1$ being a particular instance of \eqref{A}).
For $1 \leq i \leq n$, let $V_i$ be either $H^m(0,1)$ 
or a closed subspace incorporating essential boundary conditions. 
Let $\Sigma_i=\{\sigma_{i,\lambda_i}:\lambda_i \in \nabla_i\}$ be a
normalized Riesz basis for $H_i:=L^2(0,1)$, that renormalized in 
$V_i$ is a Riesz basis for $V_i$.
For any $\Lambda_i \subset \nabla_i$ we denote by 
$Q_{\Lambda_i}:L^2(0,1) \rightarrow \spann(\theta_{\lambda}:\lambda\in\nabla_i)$
the $L^2(0,1)$-biorthogonal projectors associated to $\Sigma_i$ and $\Lambda_i$. 
The assumption of $\Sigma_i$ consisting of continuous, piecewise polynomial
functions of order $d_x$ means that, with 
$\nabla_i^{(k)} =\{ \lambda \in \nabla_i: |\lambda| \leq k \in \N_0\}$, 
on any finite subinterval $(0,T) \subset \IR_>$
it holds that $\# \nabla_i^{(k)} \eqsim 2^{k}$ 
(with the constant implied in $\eqsim$ being $O(T)$).
With the convention $\nabla_i^{(-1)}:=\emptyset$,
and $Q_{-1,i} \equiv 0$, 
we have for $Q_{k,i}:= Q_{\nabla_i^{(k)}}$ that 
$$
\|\mathrm{Id}-Q_{k,i}\|_{\cL(H^{d_x}(0,1) \cap V_i , V_i)} \lesssim 2^{-k (d_x-m)}\;,
\quad
\|\mathrm{Id}-Q_{k,i}\|_{\cL(H^{d_x}(0,1) \cap V_i, H_i)} \lesssim 2^{-k d_x}
\;.
$$
The collection 
$\Sigma:=\otimes_{i=1}^n \Sigma_i=\{\sigma_{\lambda}:=\otimes_{i=1}^n \sigma_{i,\lambda_i}: 
\lambda \in \nabla_x:=\prod_{i=1}^n \nabla_i\}$ 
is a normalized Riesz basis for $L^2(D)$.
Rescaling this basis in 
$$
V:=\cap_{i=1}^n \otimes_{j=1}^n W_{i j}\;, \mbox{ where } 
W_{i j}:=
\begin{cases}
H_j\;, & \text{when } j \neq i\;,
\\
V_i\;, & \text{when } j =i\;,
\end{cases}
$$
it is a Riesz basis for $V$ as well.

Recall that for any $\Lambda \subset \nabla_x$, 
$Q_{\Lambda}$ denotes the $L^2(D)$-biorthogonal projector associated to 
$\Sigma$ and $\Lambda$.
As shown in \cite{GH13,Todor09}, there exist ``optimized'' sparse product sets 
$\nabla_x^{(1)} \subset \nabla_x^{(1)} \subset \cdots \subset \nabla_x$ 
and 
$\hat{\nabla}_x^{(1)} \subset \hat{\nabla}_x^{(1)} \subset \cdots \subset \nabla_x$ 
with 
$\#\nabla_x^{(1)} \eqsim 2^k \eqsim \hat{\nabla}_x^{(k)}$, 
such that with 
$Q_{k,x} := Q_{\nabla_x^{(k)}}$ 
and with 
$\hat{Q}_{k,x}:=Q_{\hat{\nabla}_x^{(k)}}$, 
and 
$$
\mathcal{H}^{d_x}(D)
:=
\cap_{i=1}^n \otimes_{j=1}^n Z_{i j}\;, \text{ where } 
Z_{i j}:=
\begin{cases}
H_j\;, & \text{when } j \neq i\;,
\\
H^{d_x}(0,1) \cap V_i\;, & \text{when } j =i\;,
\end{cases}
$$
it holds that
$$
\|\mathrm{Id}-Q_{k,x}\|_{\cL(\mathcal{H}^{d_x}(D), V)}
\lesssim 2^{-k (d_x-m)}\;,
\quad
\|\mathrm{Id}-\hat{Q}_{k,x}\|_{\cL(\mathcal{H}^{d_x}(D), H)}
\lesssim 2^{-kd_x}
\;.
$$
Choosing as index set $\Lambda_{\mathrm{B}}$ the 
union of sparse products of the index sets 
$(\nabla_t^{(p)})_{0\leq p \leq k}$ 
with $(\nabla_x^{(q)})_{0\leq q \leq \ell}$ 
or 
$(\hat{\nabla}_x^{(q)})_{0\leq q \leq \ell}$ for suitable $k$ and $\ell$,
we obtain $L^2(\IR_> \times D)$-biorthogonal projectors associated to 
$X_{\Lambda_{\mathrm{B}}} \subset X = \mathrm{clos}_X(\Theta \otimes \Sigma)$ 
that, 
for $u \in (H^{d_t}\cap H^1_{0,\{0\}})(\IR_>) \otimes \mathcal{H}^{d_x}(D)$, 
with a set of at most $N$ basis functions
give rise to an error in $H^s_{0,\{0\}}(\IR_>;V)$ 
of order $2^{-k \min(d_t-s,d_x-m)}$, for $s=0,1$.
Interpolation between $L^2(\IR_>)$ and $H^1_{0,\{0\}}(\IR_>)$ 
results, by Proposition~\ref{prop:Interp00},
in the norm of the Bochner space 
$X \simeq H^{\frac12}_{00,\{0\}}(\IR_>)\otimes V$ 
in the (best possible, for smooth functions) rate
\be\label{eq:bestSpTtx}
\min(d_t-\frac12,d_x-m)  
\;.
\ee
Summarizing \eqref{eq:rateSpTt} and \eqref{eq:bestSpTtx}, for
solutions which are smooth functions of space and time, the rate
\begin{equation} \label{eq:sast}
s_{\mathrm{max}} 
:= 
\begin{cases}
\min(d_t-\frac12,\frac{d_x-m}{n})-\varepsilon & \text{in case } \eqref{A}\;,
\\
\min(d_t-\frac12,d_x-m) & \text{in case } \eqref{B}\;.
\end{cases}
\end{equation}
is realized with the index sets $\Lambda_{\mathrm{A}},
\Lambda_{\mathrm{B}}\subset \nabla^X$.
\section{Adaptivity}
\label{sec:Adap}
The sparse tensor space-time Galerkin discretization
\eqref{eq:ParOpGal} based on the a priori choices
$X_{\Lambda_{\mathrm{A}}}$, $X_{\Lambda_{\mathrm{B}}}$ of sparse
tensor product trial spaces and the corresponding testfunction spaces
$Y_{\Lambda_{\mathrm{A}}}$, $Y_{\Lambda_{\mathrm{B}}}$ lead to
quasi-optimal approximations; the quality of the Galerkin
approximation thus being determined by the best approximation
property.  Alternatively, following \cite{CDD2,GHS07}, (sequences of)
subspaces $X^\ell = X_{\Lambda_\ell^X} \subset X$ and $Y^\ell =
Y_{\Lambda_\ell^Y} \subset Y$ may be selected {\em adaptively}, with
sequences $\{ \Lambda_k \}_{k\geq 0}\subset \nabla_t\times \nabla_x$
of sets of ``active'' basis elements $\theta_{\lambda}\otimes
\sigma_{\mu}\in \Psi=\Theta\otimes\Sigma$ determined so as to
ensure optimality properties of the corresponding Galerkin
approximations $u_{\Lambda_k^X}$ {\em for the given set of data}.  In
doing this, a key role is played by the (approximate) computability of
(finite sections of) the bi-infinite matrix $\matr{B}$ defined by
\be\label{eq:GalMat}
\matr{B}
= 
\big( \big(\rD^{\frac12}_+ [\Theta^X]_{\frac12}), 
\rD^{\frac12}_-[\Theta^Y]_{\frac12}\big)\big)_{L^2(\IR_>)}
\otimes
\big(\Sigma,\Sigma\big)_H
+ \big( \Theta^X , \Theta^Y \big)_{L^2(\IR_>)} 
\otimes a\big([\Sigma]_V,[\Sigma]_V\big)
\;.
\ee
We recapitulate basic properties of adaptive wavelet-Galerkin methods,
in particular, the notions of admissibility and computability of the
corresponding discretized operators; our presentation will be
synoptic, and we refer readers who are unfamiliar with these to
\cite{Stev09Surv,ScSt09}.  We will, in particular, review the notions
of $s$-admissibility, $s$-computability and $s$-compressibility of
Galerkin matrices of operators. Finally, we obtain an optimality
result for the adaptive wavelet Galerkin discretization of the
space-time variational formulation \eqref{eq:ParaWeak}: the sequence
of Galerkin solutions produced by the adaptive scheme is optimal in
the norm of $X$ with respect to the best $N$-term approximation of
the solution in space-time tensor product wavelet bases; thereby
offering the first result on optimality for a nonlinear and
compressive algorithm for long-time parabolic evolution problems.
This is distinct from \cite{ChegSt11,ScSt09}, where the constants in
the error and complexity estimates depend on the length of the time
interval.
\subsection{Nonlinear approximation}
\label{sec:NonlAppr}
Nonlinear approximations to $u\in X$ are obtained from
its coefficient vector ${\bf u}$ by best $N$-term approximations ${\bf u}_N$.
These vectors, with supports of size $N \in \N_0$, encode 
the $N$ largest coefficients in modulus of ${\bf u}$. 
For $s>0$, the approximation class 
$
\cA_{\infty}^s(\ell_2(\nabla^{X})):=\big\{{\bf v}\in \ell_2(\nabla^{X}): 
       \|{\bf v}\|_{\cA_{\infty}^s(\ell_2(\nabla^{X}))}<\infty\big\},
$
where
$$
\|{\bf v}\|_{\cA_{\infty}^s(\ell_2(\nabla^{X}))}
:=
\sup_{\delta>0} 
\delta \times 
[\min\{N \in \N_0:\|{\bf v}-{\bf v}_N\|_{\ell_2(\nabla^{X})} 
\leq \delta\}]^s
$$
contains all ${\bf v}$ whose best 
$N$-term approximations converge to ${\bf v}$ with rate $s$. 

Since best $N$-term approximations involve searching the entire vector
${\bf v}$, they cannot be realized in practice.  In addition, for a
solution $u\in X$ of the PDE \eqref{eq:ParProb}, the vector ${\bf u}$
to be approximated is not explicitly available. It is only given
implicitly via \eqref{eq:ParProb}, \eqref{eq:ParIC} through the
(equivalent) bi-infinite matrix vector problem \eqref{mv} with respect
to some Riesz basis $\Psi^X$.  Our aim is to construct a practical
method that produces approximations to ${\bf u}$ which, whenever ${\bf
  u} \in \cA^s_{\infty}(\ell_2(\nabla^{X}))$ for some $s>0$, converge
with this rate $s$ in linear computational complexity.
\subsection{Adaptive Galerkin methods}
\label{sec:AdGal}
Let $s>0$ be such that ${\bf u} \in
\cA^s_{\infty}(\ell_2(\nabla^{X}))$.  In \cite{CDD2} and the
references there, {\em adaptive wavelet Galerkin methods} for solving
\eqref{mv} were introduced.  These methods are iterative methods which
address the non-elliptic nature of the operator \eqref{eq:ParProb} by
iterating, instead of \eqref{eq:GalMat}, on the associated normal
equations, i.e., on the linear system
\begin{equation} \label{eq:normal}
{\bf B}^{\ast} {\bf B} {\bf u} ={\bf B}^{\ast} {\bf f}\;.
\end{equation}
Key ingredients in the estimates of their complexity are asymptotic
cost bounds for approximate matrix-vector products in terms of the
prescribed tolerance $\eps$.
\begin{definition}\label{def:sadmiss} ($s^*$-admissibility) 
  ${\bf B} \in \cL(\ell_2(\nabla^{X}),\ell_2(\nabla^{Y}))$ {\em is
    $s^{\ast}$-admissible if there exists a routine
$$
{\bf APPLY}_{\bf B}[{\bf w},\eps] \rightarrow {\bf z}
$$
which yields, for any $\eps>0$ and any finitely supported 
${\bf w} \in \ell_2(\nabla^{X})$,
a finitely supported ${\bf z} \in \ell_2(\nabla^{Y})$
with $\|{\bf B} {\bf w}-{\bf z}\|_{\ell_2(\nabla^{Y})}\leq \eps$
and for which, for any $\bar{s} \in (0,s^{\ast})$, there exists an 
{\em admissibility constant} $a_{{\bf B},\bar{s}}$ such that 
$\# {\rm supp}\,{\bf z} \leq a_{{\bf B},\bar{s}} \eps^{-1/\bar{s}} 
 \|{\bf w}\|^{1/\bar{s}}_{\cA_{\infty}^{\bar{s}}(\ell_2(\nabla^{X}))}$, 
and the number of arithmetic operations and storage locations used by 
the call ${\bf APPLY}_{\bf B}[{\bf w},\eps]$
is bounded by some absolute multiple of
$$a_{{\bf B},\bar{s}}\eps^{-1/\bar{s}}\|{\bf w}\|_{\cA_{\infty}^{\bar{s}}
          (\ell_2(\nabla^{X}))}^{1/\bar{s}}+ \#{\rm supp}\,{\bf w} + 1.
$$
}
\end{definition}
One key step in adaptive wavelet methods for \eqref{mv}
is thus the construction of a valid routine 
${\bf APPLY}_{\bf B}[{\bf w},\eps]$ 
for the bi-infinite matrices ${\bf B}$ defined in \eqref{eq:GalMat}.

In order to approximate ${\bf u}$ one should be able to approximate ${\bf f}$. 
{\em Throughout what follows, we therefore 
assume availability of the following routine.}

{\em
${\bf RHS}_{\bf f}[\eps] \rightarrow {\bf f}_{\eps}:$
For given $\eps>0$,
it yields a finitely supported
${\bf f}_{\eps} \in \ell_2(\nabla^{Y})$ with 
$$\| {\bf f}-{\bf f}_{\eps} \|_{\ell_2(\nabla^{Y})}\leq \eps 
  \quad\mbox{and}\quad
  \#{\rm supp}\,{\bf f}_{\eps} \lesssim \min\{N:\|{\bf f}-{\bf f}_{N}\|\leq \eps\},
$$ 
with the number of arithmetic operations and storage locations used by
the call ${\bf RHS}_{\bf f}[\eps]$
bounded by some absolute multiple of $\#{\rm supp}\,{\bf f}_{\eps}+1$.
}

The availability of 
${\bf APPLY}_{\bf B}$ and ${\bf RHS}_{\bf f}$ implies the 
following result.
\begin{proposition} \label{prop1}
Let ${\bf B}$ in \eqref{mv} be $s^{\ast}$-admissible. 
Then for any $\bar{s} \in (0,s^{\ast})$, we have
$\|{\bf B}\|_{\cL(\cA^{\bar{s}}_{\infty}(\ell_2(\nabla^{X})), 
  \cA^{\bar{s}}_{\infty}(\ell_2(\nabla^{Y})))}
\leq a_{{\bf B},\bar{s}}^{\bar{s}}$.
For ${\bf z}_{\eps}:={\bf APPLY}_{\bf B}[{\bf w},\eps]$, 
there holds
$\|{\bf z}_{\eps}\|_{\cA_{\infty}^{\bar{s}}(\ell_2(\nabla^{Y}))} 
 \leq 
 a_{{\bf B},\bar{s}}^{\bar{s}} \|{\bf w}\|_{\cA_{\infty}^{\bar{s}}(\ell_2(\nabla^{X}))}$.
\end{proposition}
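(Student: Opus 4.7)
The plan is to exploit the equivalence between the quasinorm defined in Section~\ref{sec:NonlAppr} and the usual $N$-term decay characterization: a vector ${\bf v}$ lies in $\cA^{\bar{s}}_\infty(\ell_2(\nabla^{Y}))$ with quasinorm bounded by $C$ if and only if $\|{\bf v} - {\bf v}_N\|_{\ell_2(\nabla^{Y})} \leq C N^{-\bar{s}}$ for every $N \in \IN$. The task therefore reduces to producing, for every $N$, a good $N$-term approximation of ${\bf B}{\bf w}$ (respectively of ${\bf z}_\eps$), which is precisely what the hypothesis of $s^{*}$-admissibility (Definition~\ref{def:sadmiss}) provides via the routine ${\bf APPLY}_{\bf B}$.

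For the first inequality I would fix an arbitrary $\delta > 0$ and invoke ${\bf APPLY}_{\bf B}[{\bf w},\delta]$ to obtain some ${\bf z}$ satisfying $\|{\bf B}{\bf w} - {\bf z}\|_{\ell_2(\nabla^{Y})} \leq \delta$ and $N := \#\mathrm{supp}\,{\bf z} \leq a_{{\bf B},\bar{s}}\, \delta^{-1/\bar{s}} \|{\bf w}\|_{\cA^{\bar{s}}_\infty(\ell_2(\nabla^{X}))}^{1/\bar{s}}$. Because ${\bf z}$ is itself a legitimate competitor for the best $N$-term approximation of ${\bf B}{\bf w}$, it follows that $\min\{N : \|{\bf B}{\bf w} - ({\bf B}{\bf w})_N\|_{\ell_2(\nabla^{Y})} \leq \delta\}$ is bounded by the same expression. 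Raising this to the $\bar{s}$-th power, multiplying by $\delta$, and taking the supremum over $\delta > 0$ in the definition of the $\cA^{\bar{s}}_\infty$-quasinorm directly yields $\|{\bf B}{\bf w}\|_{\cA^{\bar{s}}_\infty(\ell_2(\nabla^{Y}))} \leq a_{{\bf B},\bar{s}}^{\bar{s}} \|{\bf w}\|_{\cA^{\bar{s}}_\infty(\ell_2(\nabla^{X}))}$, which is the operator-norm estimate.

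For the second inequality the same strategy applies with one extra triangle-inequality step. Given $\delta > 0$, I would invoke ${\bf APPLY}_{\bf B}[{\bf w},\delta]$ to produce some ${\bf z}'$ with the usual support bound, and then estimate $\|{\bf z}_\eps - ({\bf z}_\eps)_N\|_{\ell_2(\nabla^{Y})} \leq \|{\bf z}_\eps - {\bf z}'\|_{\ell_2(\nabla^{Y})} \leq \|{\bf z}_\eps - {\bf B}{\bf w}\|_{\ell_2(\nabla^{Y})} + \|{\bf B}{\bf w} - {\bf z}'\|_{\ell_2(\nabla^{Y})} \leq \eps + \delta$ for $N = \#\mathrm{supp}\,{\bf z}'$, using that ${\bf z}'$ is a competitor for the best $N$-term approximation of ${\bf z}_\eps$. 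Distinguishing the regime $N \leq \#\mathrm{supp}\,{\bf z}_\eps$ (where the $\delta$-term from $\mathbf{APPLY}$ dominates $\eps$) from $N > \#\mathrm{supp}\,{\bf z}_\eps$ (where $({\bf z}_\eps)_N = {\bf z}_\eps$ gives zero error) then yields the required uniform decay rate. I expect the main technical obstacle to be the bookkeeping of multiplicative constants to arrive at exactly $a_{{\bf B},\bar{s}}^{\bar{s}}$ rather than a larger absolute multiple; this should be handled by a careful choice of the tolerance passed to ${\bf APPLY}$ relative to $\eps$, possibly combined with evaluating the supremum in the definition of $\|\cdot\|_{\cA^{\bar{s}}_\infty}$ along a dyadic scale of $\delta$ where the two regimes meet tightly.
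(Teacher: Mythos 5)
The paper does not supply its own proof of this proposition; it defers to \cite{CDD2} and \cite[Prop.~3.3]{DFR07}, and your argument reproduces the standard reasoning used there. Your proof of the first inequality is correct: for finitely supported ${\bf w}$ and any $\delta>0$, the output of ${\bf APPLY}_{\bf B}[{\bf w},\delta]$ is an $N$-term competitor for ${\bf B}{\bf w}$ with $N\le a_{{\bf B},\bar{s}}\,\delta^{-1/\bar{s}}\|{\bf w}\|_{\cA^{\bar{s}}_\infty(\ell_2(\nabla^X))}^{1/\bar{s}}$, which yields the claimed bound for each $\delta$; passing to all of $\cA^{\bar{s}}_\infty(\ell_2(\nabla^X))$ uses truncation of ${\bf w}$ together with lower semicontinuity of the quasinorm under $\ell_2$-limits (a point worth making explicit, since ${\bf APPLY}$ only accepts finitely supported input).

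For the second inequality your outline is the right idea, and you were right to flag the constant bookkeeping: there is a genuine gap in reaching exactly $a_{{\bf B},\bar{s}}^{\bar{s}}$ rather than an absolute multiple. Splitting on $\delta$: for $\delta\le\eps$ the trivial approximation $({\bf z}_\eps)_{\#{\rm supp}\,{\bf z}_\eps}={\bf z}_\eps$ together with the admissibility bound $\#{\rm supp}\,{\bf z}_\eps\le a_{{\bf B},\bar{s}}\eps^{-1/\bar{s}}\|{\bf w}\|^{1/\bar{s}}\le a_{{\bf B},\bar{s}}\delta^{-1/\bar{s}}\|{\bf w}\|^{1/\bar{s}}$ gives the contribution $\le a_{{\bf B},\bar{s}}^{\bar{s}}\|{\bf w}\|_{\cA^{\bar{s}}_\infty}$ directly. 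For $\delta>\eps$, taking ${\bf z}'={\bf APPLY}_{\bf B}[{\bf w},\delta-\eps]$ as a competitor gives $\|{\bf z}_\eps-({\bf z}_\eps)_N\|\le\eps+(\delta-\eps)=\delta$ with $N\le a_{{\bf B},\bar{s}}(\delta-\eps)^{-1/\bar{s}}\|{\bf w}\|^{1/\bar{s}}$, so the contribution is $\le\tfrac{\delta}{\delta-\eps}\,a_{{\bf B},\bar{s}}^{\bar{s}}\|{\bf w}\|_{\cA^{\bar{s}}_\infty}$, while the trivial bound gives $\le\tfrac{\delta}{\eps}\,a_{{\bf B},\bar{s}}^{\bar{s}}\|{\bf w}\|_{\cA^{\bar{s}}_\infty}$; the smaller of these two factors, optimized over $\delta>\eps$, equals $2$ (attained at $\delta=2\eps$), leaving $\|{\bf z}_\eps\|_{\cA^{\bar{s}}_\infty}\le 2\,a_{{\bf B},\bar{s}}^{\bar{s}}\|{\bf w}\|_{\cA^{\bar{s}}_\infty}$. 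Your claim that the ``$\delta$-term from ${\bf APPLY}$ dominates $\eps$'' when $N\le\#{\rm supp}\,{\bf z}_\eps$ is also not justified, since the admissibility support estimate is only an upper bound and gives no ordering between the sizes of different ${\bf APPLY}$ outputs. The stated sharp constant should therefore be read as implicitly absorbing an absolute multiple into $a_{{\bf B},\bar{s}}$ (it is not load-bearing in Theorems~\ref{thopt}, \ref{thoptnormal}); it would hold literally only under extra structure on ${\bf APPLY}$, e.g.\ if ${\bf z}_\eps$ were a subvector of ${\bf B}{\bf w}$, which is not the case for the routines of \cite{CDD1,CDD2}.
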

For proofs, we refer to \cite{CDD2} or \cite[Prop.~3.3]{DFR07}.
Using the definition of $ \cA_{\infty}^s(\ell_2(\nabla^{Y}))$ and the properties of 
${\bf RHS}_{\bf f}$,  we have 
\begin{corollary} \label{coro:fadmiss} If, in \eqref{mv}, ${\bf B}$ is
  $s^*$-admissible and ${\bf u} \in
  \cA_{\infty}^s(\ell_2(\nabla^{X}))$ for $s<s^{\ast}$, then for ${\bf
    f}_{\eps}={\bf RHS}_{\bf f}[\eps]$, $\#{\rm supp}\,{\bf f}_{\eps}
  \lesssim a_{{\bf B},s} \eps^{-1/s} \|{\bf
    u}\|_{\cA_{\infty}^s(\ell_2(\nabla^{X}))}^{1/s}$ with the number
  of arithmetic operations and storage locations used by the call
  ${\bf RHS}_{\bf f}[\eps]$ being bounded by some absolute multiple of
$$
a_{{\bf B},s}  \eps^{-1/s}\|{\bf u}\|_{\cA_{\infty}^s(\ell_2(\nabla^{X}))}^{1/s}+1.
$$
\end{corollary}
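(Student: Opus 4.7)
The plan is to chain together the properties of $\cA^s_\infty$, the mapping property of $\mathbf B$ from Proposition~\ref{prop1}, and the complexity assumption built into $\mathbf{RHS}_{\mathbf f}$. The starting observation is that by \eqref{mv}, the exact right-hand side satisfies $\mathbf f = \mathbf B \mathbf u$, so any information about the $\cA^s_\infty$-class of $\mathbf u$ transfers to $\mathbf f$ via the boundedness of $\mathbf B$ on approximation classes.

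First I would apply Proposition~\ref{prop1}, which gives
\begin{align*}
\|\mathbf f\|_{\cA^s_\infty(\ell_2(\nabla^Y))}
= \|\mathbf B \mathbf u\|_{\cA^s_\infty(\ell_2(\nabla^Y))}
\le a_{\mathbf B,s}^{\,s}\,\|\mathbf u\|_{\cA^s_\infty(\ell_2(\nabla^X))} \;.
\end{align*}
In particular, $\mathbf f \in \cA^s_\infty(\ell_2(\nabla^Y))$ because $s<s^*$ and $\mathbf u\in \cA^s_\infty(\ell_2(\nabla^X))$ by assumption.

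Next I would unpack the definition of $\cA^s_\infty$: for every $\delta>0$ one has
\begin{align*}
\min\{N\in \IN_0 : \|\mathbf f - \mathbf f_N\|_{\ell_2(\nabla^Y)}\le \delta\}
\le \delta^{-1/s}\|\mathbf f\|_{\cA^s_\infty(\ell_2(\nabla^Y))}^{1/s} \;,
\end{align*}
where $\mathbf f_N$ denotes a best $N$-term approximation of $\mathbf f$. Inserting $\delta = \eps$ and combining with the bound on $\|\mathbf f\|_{\cA^s_\infty}$ from Proposition~\ref{prop1} yields
\begin{align*}
\min\{N : \|\mathbf f-\mathbf f_N\|_{\ell_2(\nabla^Y)}\le \eps\}
\le \eps^{-1/s}\bigl(a_{\mathbf B,s}^{\,s}\|\mathbf u\|_{\cA^s_\infty(\ell_2(\nabla^X))}\bigr)^{1/s}
= a_{\mathbf B,s}\,\eps^{-1/s}\|\mathbf u\|_{\cA^s_\infty(\ell_2(\nabla^X))}^{1/s} \;.
\end{align*}

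Finally I would invoke the quasi-optimal near-best support bound that is part of the defining properties of $\mathbf{RHS}_{\mathbf f}$, namely $\#\mathrm{supp}\,\mathbf f_\eps \lesssim \min\{N : \|\mathbf f-\mathbf f_N\|\le \eps\}$, to conclude the claimed estimate on $\#\mathrm{supp}\,\mathbf f_\eps$. The arithmetic/storage bound for the call $\mathbf{RHS}_{\mathbf f}[\eps]$ is, by its definition, bounded by an absolute multiple of $\#\mathrm{supp}\,\mathbf f_\eps+1$, so the work estimate is immediate from the support bound just established. There is no real obstacle here: the proof is essentially a bookkeeping exercise that chains the three ingredients (admissibility of $\mathbf B$ via Proposition~\ref{prop1}, definition of the approximation class, and the near-best-support property of $\mathbf{RHS}_{\mathbf f}$); the only subtle point is remembering that $s<s^*$ is needed so that Proposition~\ref{prop1} applies with exponent $\bar s = s$.
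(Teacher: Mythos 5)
Your proof is correct and takes essentially the same route as the paper, which simply asserts that the corollary follows "using the definition of $\cA_{\infty}^s(\ell_2(\nabla^{Y}))$ and the properties of ${\bf RHS}_{\bf f}$" immediately after Proposition~\ref{prop1}. You have filled in exactly the intended chain: transfer of $\cA^s_\infty$-membership from ${\bf u}$ to ${\bf f}={\bf B}{\bf u}$ via Proposition~\ref{prop1}, the resulting bound $\min\{N:\|{\bf f}-{\bf f}_N\|\le\eps\}\le a_{{\bf B},s}\eps^{-1/s}\|{\bf u}\|^{1/s}_{\cA^s_\infty}$ from the definition of the approximation class, and then the near-best support and $O(\#\mathrm{supp}\,{\bf f}_\eps+1)$ cost guarantees built into ${\bf RHS}_{\bf f}$.
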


\begin{remark} \label{remmie}
Besides $\|{\bf f}-{\bf f}_{\eps}\|_{\ell_2(\nabla^{Y})} \leq \eps$, 
the complexity bounds in Corollary~\ref{coro:fadmiss} with $a_{{\bf B},s}>0$
being independent of $\eps$ 
are essential for the use of ${\bf RHS}_{\bf f}$ in the adaptive wavelet methods.
\end{remark}
The following corollary of Proposition~\ref{prop1} can be used for example for the 
construction of valid {\bf APPLY} and {\bf RHS} routines in case the adaptive wavelet 
algorithms are applied to a preconditioned system.
\begin{corollary} \label{corol}
If
${\bf B} \in \cL(\ell_2(\nabla^{X}),\ell_2(\nabla^{Y}))$, 
${\bf C} \in \cL(\ell_2(\nabla^{Y}),\ell_2(\nabla^{Z}))$ are both $s^{\ast}$-admissible, 
then so is ${\bf C} {\bf B} \in \cL(\ell_2(\nabla^{X}),\ell_2(\nabla^{Z}))$. 
A valid  routine ${\bf APPLY}_{{\bf C} {\bf B}}$ is 
\begin{equation} \label{eq:applyCapplyB}
[{\bf w},\eps] \mapsto 
{\bf APPLY}_{{\bf C}}\big[{\bf APPLY}_{{\bf B}}[{\bf w},\eps/(2\|{\bf C}\|)],\eps/2\big]\;,
\end{equation}
with admissibility constant
$a_{{\bf C} {\bf B},\bar{s}}
  \lesssim a_{{\bf B},\bar{s}}(\|{\bf C}\|^{1/\bar{s}}+a_{{\bf C},\bar{s}})$
for $\bar{s} \in (0,s^{\ast})$.

For some $s^{\ast}>s$, 
let ${\bf C} \in \cL(\ell_2(\nabla^{Y}),\ell_2(\nabla^{Z}))$ be $s^{\ast}$-admissible. 
Then for
\begin{equation} \label{113}
{\bf RHS}_{{\bf C} {\bf f}}[\eps]
:=
{\bf APPLY}_{\bf C}[{\bf RHS}_{{\bf f}}[\eps/(2 \|{\bf C}\|)],\eps/2]\;,
\end{equation}
there holds 
\begin{align*}
& \#{\rm supp}\,{\bf RHS}_{{\bf C} {\bf f}}[\eps]
\lesssim a_{{\bf B},s}(\|{\bf C}\|^{1/s}+a_{{\bf C},s}) \eps^{-1/s}
 \|{\bf u}\|_{\cA_{\infty}^s(\ell_2(\nabla^{X}))}^{1/s}\;,
\\ 
& \|{\bf C} {\bf f}-{\bf RHS}_{{\bf C} {\bf f}}[\eps] \|_{\ell_2(\nabla^{Z})}
\leq \eps\;,
\end{align*}
with the number of arithmetic operations and storage locations used by
the call ${\bf RHS}_{{\bf C} {\bf f}}[\eps]$ bounded by a
multiple of 
$$a_{{\bf B},s}(\|{\bf C}\|^{1/s}+a_{{\bf C},s}) \eps^{-1/s}
\|{\bf u}\|_{\cA_{\infty}^s(\ell_2(\nabla^{X}))}^{1/s}+1
\;.
$$
\end{corollary}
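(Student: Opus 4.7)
The proof has two independent parts, both of which follow the same ``split the tolerance in two halves'' composition idea that is already familiar from \cite{CDD2}.

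For the first assertion, set $\mathbf{y}:=\mathbf{APPLY}_{\mathbf{B}}[\mathbf{w},\varepsilon/(2\|\mathbf{C}\|)]$ and $\mathbf{z}:=\mathbf{APPLY}_{\mathbf{C}}[\mathbf{y},\varepsilon/2]$, so that $\mathbf{z}$ is precisely the output of the composed routine \eqref{eq:applyCapplyB}. The correctness of the tolerance is a triangle inequality:
\begin{align*}
\|\mathbf{CBw}-\mathbf{z}\|_{\ell_2(\nabla^Z)}
\le \|\mathbf{C}\|\,\|\mathbf{Bw}-\mathbf{y}\|_{\ell_2(\nabla^Y)}+\|\mathbf{Cy}-\mathbf{z}\|_{\ell_2(\nabla^Z)}
\le \|\mathbf{C}\|\cdot\frac{\varepsilon}{2\|\mathbf{C}\|}+\frac{\varepsilon}{2}=\varepsilon.
\end{align*}
For the cost and support bound, I would first apply Definition~\ref{def:sadmiss} to the call producing $\mathbf{y}$: this yields $\#\mathrm{supp}\,\mathbf{y}\lesssim a_{\mathbf{B},\bar s}\,(2\|\mathbf{C}\|)^{1/\bar s}\,\varepsilon^{-1/\bar s}\,\|\mathbf{w}\|_{\cA^{\bar s}_\infty}^{1/\bar s}$ and the same kind of bound on operation count. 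Then Proposition~\ref{prop1} gives $\|\mathbf{y}\|_{\cA^{\bar s}_\infty(\ell_2(\nabla^Y))}\le a_{\mathbf{B},\bar s}^{\bar s}\,\|\mathbf{w}\|_{\cA^{\bar s}_\infty(\ell_2(\nabla^X))}$, which is the exactly the input required for applying Definition~\ref{def:sadmiss} to the second call; it yields
\begin{align*}
\#\mathrm{supp}\,\mathbf{z}\lesssim a_{\mathbf{C},\bar s}\,\varepsilon^{-1/\bar s}\,\|\mathbf{y}\|_{\cA^{\bar s}_\infty}^{1/\bar s}\lesssim a_{\mathbf{B},\bar s}\,a_{\mathbf{C},\bar s}\,\varepsilon^{-1/\bar s}\,\|\mathbf{w}\|_{\cA^{\bar s}_\infty}^{1/\bar s}.
\end{align*}
Adding the two contributions and collecting the $\|\mathbf{C}\|^{1/\bar s}$ coming from the first call yields the claimed admissibility constant $a_{\mathbf{CB},\bar s}\lesssim a_{\mathbf{B},\bar s}(\|\mathbf{C}\|^{1/\bar s}+a_{\mathbf{C},\bar s})$, and the operation-count bound follows by adding the bounds from the two APPLY calls and noting that $\#\mathrm{supp}\,\mathbf{w}$ already appears in the bound for the first call.

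The second assertion is proved in exactly the same way, with $\mathbf{RHS}_{\mathbf{f}}$ playing the role of $\mathbf{APPLY}_{\mathbf{B}}$. Writing $\mathbf{f}_{\varepsilon/(2\|\mathbf{C}\|)}:=\mathbf{RHS}_{\mathbf{f}}[\varepsilon/(2\|\mathbf{C}\|)]$ and $\mathbf{z}:=\mathbf{APPLY}_{\mathbf{C}}[\mathbf{f}_{\varepsilon/(2\|\mathbf{C}\|)},\varepsilon/2]$, the same triangle inequality as above gives $\|\mathbf{Cf}-\mathbf{z}\|_{\ell_2(\nabla^Z)}\le\varepsilon$. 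For the support bound, I would invoke Corollary~\ref{coro:fadmiss} (with tolerance $\varepsilon/(2\|\mathbf{C}\|)$) to get
\begin{align*}
\#\mathrm{supp}\,\mathbf{f}_{\varepsilon/(2\|\mathbf{C}\|)}\lesssim a_{\mathbf{B},s}\,(2\|\mathbf{C}\|)^{1/s}\,\varepsilon^{-1/s}\,\|\mathbf{u}\|_{\cA^s_\infty(\ell_2(\nabla^X))}^{1/s}.
\end{align*}
The key auxiliary observation is that the $\cA^s_\infty$ quasi-norm is stable under best $N$-term approximation, so $\|\mathbf{f}_{\varepsilon/(2\|\mathbf{C}\|)}\|_{\cA^s_\infty(\ell_2(\nabla^Y))}\lesssim\|\mathbf{f}\|_{\cA^s_\infty(\ell_2(\nabla^Y))}\le a_{\mathbf{B},s}^s\|\mathbf{u}\|_{\cA^s_\infty(\ell_2(\nabla^X))}$ by Proposition~\ref{prop1} applied to $\mathbf{f}=\mathbf{Bu}$. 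Then Definition~\ref{def:sadmiss} for the APPLY call on $\mathbf{C}$ yields the claimed bound on $\#\mathrm{supp}\,\mathbf{z}$, and the operation count is again the sum of the two calls' costs.

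The main obstacle, such as it is, is purely bookkeeping: one must carefully track how the tolerance splits propagate into the $\|\mathbf{C}\|^{1/\bar s}$ factors, and one must remember to invoke Proposition~\ref{prop1} to upgrade the approximation-class bound from the input to the intermediate vector before the second APPLY call. Nothing new is needed beyond what was already proved in Proposition~\ref{prop1} and Corollary~\ref{coro:fadmiss}; the result is a mechanical composition lemma whose value lies precisely in giving a clean statement for use in later algorithm design.
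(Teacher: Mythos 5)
Your proposal is correct and follows the standard CDD composition approach that the paper (which gives no explicit proof) evidently has in mind: split the tolerance, use the triangle inequality for accuracy, use Definition~\ref{def:sadmiss} for the support and cost of each call, and propagate the approximation-class bound across the intermediate vector.

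One point in the second part deserves sharper phrasing. You say that ``the $\cA^s_\infty$ quasi-norm is stable under best $N$-term approximation,'' and on that basis conclude $\|\mathbf{f}_{\eps/(2\|\mathbf{C}\|)}\|_{\cA^s_\infty}\lesssim\|\mathbf{f}\|_{\cA^s_\infty}$. But $\mathbf{RHS}_{\mathbf{f}}$ does \emph{not} return a best $N$-term approximation of $\mathbf{f}$: it returns some finitely supported vector $\mathbf{f}_\eps$ (whose nonzero entries need not even coincide with entries of $\mathbf{f}$) satisfying only the two guarantees $\|\mathbf{f}-\mathbf{f}_\eps\|\le\eps$ and $\#\mathrm{supp}\,\mathbf{f}_\eps\lesssim\min\{N:\|\mathbf{f}-\mathbf{f}_N\|\le\eps\}\le\eps^{-1/s}\|\mathbf{f}\|_{\cA^s_\infty}^{1/s}$. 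The correct tool is therefore the standard coarsening stability lemma of \cite{CDD2}: any finitely supported $\mathbf{g}$ with $\|\mathbf{f}-\mathbf{g}\|\le\eps$ and $\#\mathrm{supp}\,\mathbf{g}\lesssim\eps^{-1/s}\|\mathbf{f}\|_{\cA^s_\infty}^{1/s}$ satisfies $\|\mathbf{g}\|_{\cA^s_\infty}\lesssim\|\mathbf{f}\|_{\cA^s_\infty}$. Your two hypotheses are exactly what that lemma needs, so the conclusion you draw is right; it is only the attribution (``stability under best $N$-term approximation'') that is off, and worth fixing so that a reader does not mistakenly believe $\mathbf{f}_\eps$ is a truncation of $\mathbf{f}$. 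The rest of the bookkeeping --- the $\|\mathbf{C}\|^{1/\bar s}$ factor coming from the first call's tolerance $\eps/(2\|\mathbf{C}\|)$, the use of Proposition~\ref{prop1} to control $\|\mathbf{y}\|_{\cA^{\bar s}_\infty}$ and $\|\mathbf{f}\|_{\cA^s_\infty}$, and the fact that the final admissibility constant is dictated by the operation count rather than the output support alone --- is handled correctly.
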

\begin{remark}
${\bf RHS}_{{\bf C} {\bf f}}$ allows to 
approximate ${\bf C}{\bf f}$ in the sense of Remark~\ref{remmie}.
\end{remark}
Consider first the case that ${\bf B}$ is {\em self-adjoint positive
  definite}, i.e., $\nabla^{X}=\nabla^{Y}$ and ${\bf B}={\bf
  B}^{\ast} >0$.  In this case the adaptive wavelet methods from
\cite{CDD2} are {\em optimal} in the following sense.

\begin{theorem} \label{thopt} (\cite{CDD2,GHS07}) If in (\ref{mv})
  ${\bf B}$ is self-adjoint positive definite and $s^*$-admissible,
  then for any $\eps>0$, the adaptive wavelet method from \cite{CDD2}
  produces an approximation ${\bf u}_{\eps}$ to ${\bf u}$ with $\|{\bf
    u}-{\bf u}_{\eps}\|_{\ell_2(\nabla^{X})} \leq \eps$.  If in
  \eqref{mv} for some $s > 0$ it holds ${\bf u} \in
  \cA^s_{\infty}(\ell_2(\nabla^{X}))$, then $\# {\rm supp}\,{\bf
    u}_{\eps} \lesssim \eps^{-1/s}\|{\bf
    u}\|_{\cA^s_{\infty}(\ell_2(\nabla^{X}))}^{1/s}$ and if, in
  addition, $s<s^*$, the number of arithmetic operations and storage
  locations required by one call of either of these adaptive wavelet
  solvers with tolerance $\eps$ is bounded by a multiple of
$$
\eps^{-1/s}(1+a_{{\bf B},s})\|{\bf u}\|_{\cA^s_{\infty}(\ell_2(\nabla^{X}))}^{1/s}+1.
$$
The factor depends only on $s$ when it tends to $0$ or $\infty$,
and on $\|{\bf B}\|$ and  $\|{\bf B}^{-1}\|$.
\end{theorem}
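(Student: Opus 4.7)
The plan is to analyze a perturbed Richardson iteration applied directly to \eqref{mv} (since ${\bf B}$ is self-adjoint positive definite, there is no need to pass to the normal equations \eqref{eq:normal}) and to control both the accuracy and the support growth of the iterates by combining ${\bf APPLY}_{\bf B}$ and ${\bf RHS}_{\bf f}$ with a periodic coarsening step. Because ${\bf B}\colon \ell_2(\nabla^{X})\to\ell_2(\nabla^{X})$ is boundedly invertible and SPD, there is a damping parameter $\alpha\in(0,2/\|{\bf B}\|)$ with $\|\II-\alpha{\bf B}\|=:\rho<1$, so that the \emph{exact} iteration ${\bf u}^{(k+1)}={\bf u}^{(k)}-\alpha({\bf B}{\bf u}^{(k)}-{\bf f})$ contracts the $\ell_2$-error by a factor $\rho$ per step, starting e.g.\ from ${\bf u}^{(0)}=0$.

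Next I would replace the exact matrix-vector multiplication and the exact right-hand side by the inexact routines of Definition~\ref{def:sadmiss} and the ${\bf RHS}_{\bf f}$ axiom, choosing tolerances $\eta_k=\delta_0\rho^k$ with $\delta_0$ calibrated so that, after accumulating the perturbations over $k$ steps, the total error $\|{\bf u}-\tilde{\bf u}^{(k)}\|_{\ell_2(\nabla^{X})}$ still decays geometrically at a rate $\bar\rho\in(\rho,1)$. Stopping at the first $k_\eps$ with $\bar\rho^{k_\eps}\,\|{\bf u}\|_{\ell_2}\le\eps$ then delivers an iterate within tolerance $\eps$ of ${\bf u}$, and $k_\eps\lesssim|\log\eps|$.

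The support of $\tilde{\bf u}^{(k)}$ would, however, grow uncontrollably under the pure inexact iteration, and $\|\tilde{\bf u}^{(k)}\|_{\cA^s_\infty(\ell_2(\nabla^{X}))}$ could in principle increase at every step. To prevent this I would insert, after every fixed block of $K$ inexact Richardson steps, a coarsening step: given an iterate ${\bf v}$ with $\|{\bf v}-{\bf u}\|_{\ell_2}\le\eta$, replace it with its best $N$-term truncation ${\bf v}_N$, where $N$ is the smallest integer such that $\|{\bf v}-{\bf v}_N\|_{\ell_2}\le C\eta$ for a fixed $C$. The Stevenson coarsening lemma (see \cite{CDD2}) then yields $N\lesssim \eta^{-1/s}\|{\bf u}\|_{\cA^s_\infty(\ell_2(\nabla^{X}))}^{1/s}$ and, crucially, $\|{\bf v}_N\|_{\cA^s_\infty(\ell_2(\nabla^{X}))}\lesssim\|{\bf u}\|_{\cA^s_\infty(\ell_2(\nabla^{X}))}$. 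Combined with Proposition~\ref{prop1}, which guarantees that ${\bf APPLY}_{\bf B}$ maps $\cA^{\bar s}_\infty$ into itself boundedly for $\bar s<s^*$, this produces a uniform bound on $\|\tilde{\bf u}^{(k)}\|_{\cA^s_\infty(\ell_2(\nabla^{X}))}$ in terms of $\|{\bf u}\|_{\cA^s_\infty(\ell_2(\nabla^{X}))}$, valid along the whole iteration.

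With that uniform bound in hand, each call ${\bf APPLY}_{\bf B}[\tilde{\bf u}^{(k)},\eta_k]$ and ${\bf RHS}_{\bf f}[\eta_k]$ costs, by Definition~\ref{def:sadmiss} and Corollary~\ref{coro:fadmiss}, at most a constant multiple of $a_{{\bf B},s}\,\eta_k^{-1/s}\|{\bf u}\|_{\cA^s_\infty(\ell_2(\nabla^{X}))}^{1/s}$, and the coarsening step is even cheaper. Summing over $k=0,\dots,k_\eps$ produces a geometric series $\sum_k(\delta_0\rho^k)^{-1/s}$ whose sum is dominated by its last term, i.e.\ by a constant multiple of $\eps^{-1/s}\|{\bf u}\|_{\cA^s_\infty(\ell_2(\nabla^{X}))}^{1/s}$, proving both the support bound on ${\bf u}_\eps$ and the overall complexity bound; the dependence on $\|{\bf B}\|$ and $\|{\bf B}^{-1}\|$ enters only through $\alpha$, $\rho$, and $C$. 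The main obstacle, and the technical heart of the argument in \cite{CDD2,GHS07}, is the propagation of the approximation-class bound: addition of two $\cA^s_\infty$-elements need not preserve the class constant, so without the coarsening step the bookkeeping collapses. Proving the coarsening lemma — that a best $N$-term truncation chosen with respect to a tolerance proportional to the current error inherits $\|{\bf u}\|_{\cA^s_\infty}$ rather than $\|{\bf v}\|_{\cA^s_\infty}$ — and tuning $K$, $\delta_0$, $C$ so that the inner iterations neither overshoot nor waste work, is where the work lies; after that, the final estimate is a routine geometric-series calculation.
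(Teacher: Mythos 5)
This theorem is stated in the paper as a citation from \cite{CDD2,GHS07} and carries no proof in the text, so there is no ``paper's proof'' to compare against directly; your task here is really to reconstruct the argument in the cited sources. Your sketch is a faithful outline of the coarsening-based approach of \cite{CDD2}: a damped inexact Richardson iteration with tolerances that decrease geometrically, uniform control of the $\cA^s_\infty$-norm of the iterates via a coarsening step after every block of $K$ Richardson steps, and a geometric-series tally of work dominated by the last block. The two structural pillars you identify --- the boundedness of ${\bf APPLY}_{\bf B}$ on $\cA^{\bar s}_\infty$ from Proposition~\ref{prop1}, and the coarsening lemma that resets the class constant to a multiple of $\|{\bf u}\|_{\cA^s_\infty}$ rather than of the current iterate --- are indeed the technical heart of that proof.

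Two remarks. First, the coarsening lemma is due to Cohen, Dahmen and DeVore (it appears already in their first paper and underlies \cite{CDD2}); attributing it to Stevenson is a misdirection, since Stevenson's contribution (with Gantumur and Harbrecht, \cite{GHS07}) is precisely the opposite: that coarsening can be \emph{dispensed with} if one runs a genuine adaptive Galerkin loop with a bulk-chasing (D\"orfler-type) marking of approximate residual coefficients and solves the discrete SPD Galerkin systems exactly, the saturation property replacing the coarsening reset. So the theorem as stated covers two genuinely different algorithms, and your sketch only addresses the first. Second, your phrasing that Proposition~\ref{prop1} alone produces a uniform $\cA^s_\infty$-bound along the whole iteration slightly overstates matters: within a block of $K$ inexact Richardson steps the class constant of the iterate can grow by up to a factor $a_{{\bf B},s}^{sK}$, and it is exactly the coarsening at the end of the block that resets it. The tuning of $K$ against the contraction rate $\rho$ and the coarsening constant is what makes the geometric-series estimate close --- you acknowledge this at the end, but it should be flagged as the place where the argument can fail rather than as routine bookkeeping. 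With those caveats, the outline is correct and in the spirit of the cited works.
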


The adaptive Galerkin discretization method from \cite{CDD1} for
self-adjoint operators ${\bf B}$ consists of the application of a
damped Richardson iteration to ${\bf B}{\bf u}={\bf f}$, where the
required residual computations are approximated using calls of ${\bf
  APPLY}_{\bf B}$ and ${\bf RHS}_{\bf f}$ within tolerances that
decrease linearly with the iteration counter.

With the method from \cite{CDD1}, a sequence $\Xi_0 \subset \Xi_1
\subset \cdots\subset \nabla^{X}$ is produced, together with
corresponding (approximate) Galerkin solutions ${\bf u}_i \in
\ell_2(\Xi_i)$.  The coefficients of approximate residuals ${\bf
  f}-{\bf B}{\bf u}_i$ are used as indicators how to expand $\Xi_i$ to
$\Xi_{i+1}$ such that it gives rise to an improved Galerkin
approximation.

The method of \cite{CDD1} relies on a recurrent coarsening of the
approximation vectors, where small coefficients are removed to
maintain optimal balance between accuracy and support length.  We have
$s^{\ast}$-admissibility of ${\bf B}$ once the stiffness matrix with
respect to suitable wavelet bases is close to a computable sparse
matrix.  The next definition makes this precise.
\begin{definition} ($s^{\ast}$-computability) \label{def:scomput} 
${\bf B} \in \cL(\ell_2(\nabla^{X}),\ell_2(\nabla^{Y}))$ 
{\em is $s^{\ast}$-computable if, for each $N \in \N$, there exists a 
${\bf B}_N \in  \cL(\ell_2(\nabla^{X}),\ell_2(\nabla^{Y}))$
having in each column at most $N$ non-zero entries whose joint computation 
takes an absolute multiple of $N$ operations, such that 
the
{\em computability constants}
$$
c_{{\bf B},\bar{s}}:=\sup_{N \in \N} N \|{\bf B}-{\bf
  B}_N\|_{\ell_2(\nabla^{X}) \rightarrow  
\ell_2(\nabla^{Y})}^{1/\bar{s}}
$$
are finite for any $\bar{s}\in (0,s^{\ast})$.
}
\end{definition}

\begin{theorem} \label{th10} 
An $s^{\ast}$-computable ${\bf B}$ is $s^{\ast}$-admissible.
Moreover, for $\bar{s}<s^{\ast}$, $a_{{\bf B},\bar{s}} \lesssim c_{{\bf B},\bar{s}}$ 
where the constant in this estimate depends only on $\bar{s}
\downarrow 0$, $\bar{s} \uparrow s^{\ast}$,  
and on $\|{\bf B}\| \rightarrow \infty$.
\end{theorem}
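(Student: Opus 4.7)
The plan is to build the routine ${\bf APPLY}_{\bf B}$ explicitly from the sparse matrices ${\bf B}_N$ furnished by $s^\ast$-computability, by applying a level-dependent ${\bf B}_{n_k}$ to the pieces of a dyadic decomposition of the input vector; the admissibility constant $a_{{\bf B},\bar{s}}$ will then be read off from $c_{{\bf B},\bar{s}'}$ for an auxiliary rate $\bar{s}'\in(\bar{s},s^\ast)$. That such an $\bar{s}'$ exists is the only place where the strict inequality $\bar{s}<s^\ast$ is used, and it is the key to avoiding spurious logarithmic factors.

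Fix $\bar{s}\in(0,s^\ast)$, a finitely supported ${\bf w}\in\cA_\infty^{\bar{s}}(\ell_2(\nabla^{X}))$ and $\epsilon>0$, and pick an auxiliary rate $\bar{s}'\in(\bar{s},s^\ast)$. I would split ${\bf w}$ into dyadic increments of best $N$-term approximations: set ${\bf w}_{[0]}:=0$ and $\delta_k:={\bf w}_{[2^k]}-{\bf w}_{[2^{k-1}]}$ for $k\in\IN_0$, so that $\#{\rm supp}\,\delta_k\leq 2^{k+1}$ and, directly from the definition of $\cA_\infty^{\bar{s}}$, $\|\delta_k\|_{\ell_2(\nabla^{X})}\lesssim 2^{-k\bar{s}}\|{\bf w}\|_{\cA_\infty^{\bar{s}}(\ell_2(\nabla^{X}))}$. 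The dyadic bins can be extracted by binning $|w_\lambda|$ by powers of two in $O(\#{\rm supp}\,{\bf w})$ operations (see \cite{Stev09Surv}). Let $J$ be the smallest nonnegative integer satisfying $\|{\bf B}\|\,2^{-J\bar{s}}\|{\bf w}\|_{\cA_\infty^{\bar{s}}}\leq\epsilon/2$, so that $2^J\simeq(\|{\bf B}\|\|{\bf w}\|_{\cA_\infty^{\bar{s}}}/\epsilon)^{1/\bar{s}}$, fix a compression exponent $\rho\in(\bar{s}/\bar{s}',\,1)$, and return
\begin{align*}
{\bf z}\;:=\;\sum_{k=0}^{J}{\bf B}_{n_k}\delta_k\,,\qquad n_k:=\lceil 2^{\rho(J-k)}\rceil\,.
\end{align*}

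Splitting the error gives $\|{\bf B}{\bf w}-{\bf z}\|\leq \|{\bf B}\|\,\|{\bf w}-{\bf w}_{[2^J]}\|+\sum_{k=0}^{J}\|{\bf B}-{\bf B}_{n_k}\|\,\|\delta_k\|\leq\epsilon/2+\mathrm{(II)}$, and $s^\ast$-computability with the dyadic bound above yields
\begin{align*}
\mathrm{(II)}\;\lesssim\;c_{{\bf B},\bar{s}'}^{\bar{s}'}\|{\bf w}\|_{\cA_\infty^{\bar{s}}}\,2^{-\rho J\bar{s}'}\sum_{k=0}^{J}2^{k(\rho\bar{s}'-\bar{s})}\;\lesssim\;c_{{\bf B},\bar{s}'}^{\bar{s}'}\|{\bf w}\|_{\cA_\infty^{\bar{s}}}\,2^{-J\bar{s}}\,,
\end{align*}
where $\rho\bar{s}'>\bar{s}$ makes the geometric series peak at $k=J$ without a logarithmic factor, so enlarging $J$ by a constant absorbs the remaining prefactor and produces $\mathrm{(II)}\leq\epsilon/2$. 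For the support I would use $\rho<1$ to bound
\begin{align*}
\#{\rm supp}\,{\bf z}\;\leq\;\sum_{k=0}^{J}n_k\,\#{\rm supp}\,\delta_k\;\lesssim\;2^{\rho J}\sum_{k=0}^{J}2^{(1-\rho)k}\;\lesssim\;2^J\,,
\end{align*}
which, upon substituting $2^J$, yields the required inequality $\#{\rm supp}\,{\bf z}\lesssim a_{{\bf B},\bar{s}}\epsilon^{-1/\bar{s}}\|{\bf w}\|_{\cA_\infty^{\bar{s}}}^{1/\bar{s}}$ with $a_{{\bf B},\bar{s}}\lesssim c_{{\bf B},\bar{s}'}$. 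The work count is then routine: each product ${\bf B}_{n_k}\delta_k$ costs $O(n_k\,\#{\rm supp}\,\delta_k)$ operations because assembling ${\bf B}_{n_k}$ takes $O(n_k)$ per column, and the total is controlled by the very sum that bounds $\#{\rm supp}\,{\bf z}$.

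The main technical obstacle is the tight balancing of the compression levels $n_k$: if $\rho\leq\bar{s}/\bar{s}'$ the error series diverges, and if $\rho\geq 1$ the support count acquires a $(J+1)$ logarithmic factor, so the window $\rho\in(\bar{s}/\bar{s}',1)$ must be non-empty --- which is precisely what the choice $\bar{s}'>\bar{s}$ inside $(0,s^\ast)$ guarantees. The geometric prefactors $(1-2^{-(\rho\bar{s}'-\bar{s})})^{-1}$ and $(1-2^{\rho-1})^{-1}$ blow up when $\bar{s}\downarrow 0$ or $\bar{s}\uparrow s^\ast$, since the admissible window for $\rho$ then shrinks; together with the linear occurrence of $\|{\bf B}\|$ in the threshold defining $J$ this accounts exactly for the dependencies on $\bar{s}\downarrow 0$, $\bar{s}\uparrow s^\ast$, and $\|{\bf B}\|\to\infty$ asserted by the theorem.
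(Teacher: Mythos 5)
The paper does not give its own proof of Theorem~\ref{th10}: it is dispatched by a one-line citation to \cite[\S6.4]{CDD1} and \cite{Stev09Surv}. Your proposal reconstructs precisely the construction those references describe --- approximate sorting of the input into dyadic blocks $\delta_k$, a finite horizon $J$ chosen from the target tolerance, geometrically growing compression levels $n_k\simeq 2^{\rho(J-k)}$ with $\rho\in(\bar{s}/\bar{s}',1)$, and an auxiliary rate $\bar{s}'\in(\bar{s},s^\ast)$ to keep both the error series and the support count geometric rather than logarithmic. The error/support/work accounting is carried out correctly, and you correctly identify where the strict inequality $\bar{s}<s^\ast$ and the quantity $\|{\bf B}\|$ enter the implied constants. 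So in terms of \emph{method} you are doing exactly what the paper's citation points to.

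One point deserves a flag, though. What your argument delivers is $a_{{\bf B},\bar{s}}\lesssim c_{{\bf B},\bar{s}'}$ for an \emph{intermediate} rate $\bar{s}'\in(\bar{s},s^\ast)$, with the implied constant depending on $\bar{s}$, $\bar{s}'$, $\rho$ and $\|{\bf B}\|$. The theorem as written asserts $a_{{\bf B},\bar{s}}\lesssim c_{{\bf B},\bar{s}}$ with the \emph{same} rate $\bar{s}$ on both sides. These are not interchangeable: with $e_N:=\|{\bf B}-{\bf B}_N\|$ one has $c_{{\bf B},\tilde{s}}=\sup_N N e_N^{1/\tilde{s}}$, and there is no bound of the form $c_{{\bf B},\bar{s}'}\leq C(\bar{s},\bar{s}',\|{\bf B}\|)\,c_{{\bf B},\bar{s}}$ for $\bar{s}'>\bar{s}$ valid over all admissible error sequences $(e_N)_N$; the ratio $c_{{\bf B},\bar{s}'}/c_{{\bf B},\bar{s}}$ genuinely depends on the decay profile of $(e_N)_N$ and not only on $\|{\bf B}\|$. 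So your proof establishes the main assertion --- that $s^\ast$-computability implies $s^\ast$-admissibility, for every $\bar{s}<s^\ast$ --- but the quantitative constant you obtain is $c_{{\bf B},\bar{s}'}$, not $c_{{\bf B},\bar{s}}$. Whether the literal statement $a_{{\bf B},\bar{s}}\lesssim c_{{\bf B},\bar{s}}$ holds is a separate issue about the cited literature (it is stated this way in \cite{ScSt09} and its sources); your write-up should either replace $c_{{\bf B},\bar{s}}$ by $c_{{\bf B},\bar{s}'}$ in the conclusion, or make explicit that the implied constant is additionally allowed to depend on a fixed $\bar{s}'\in(\bar{s},s^\ast)$ (which is consistent with, though not identical to, the stated dependence on $\bar{s}\uparrow s^\ast$). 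Aside from this, the construction, its error and support estimates, and the cost accounting are all sound.
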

This theorem is proven by the construction of a suitable ${\bf
  APPLY}_{\bf B}$ routine as was done in \cite[\S6.4]{CDD1}, see also
\cite{Stev09Surv} and the references there.

The non-elliptic nature of ${\bf B}$ was addressed in \cite{CDD2} by
applying the adaptive schemes to the {\em normal equations}
\eqref{eq:normal}: From Subsection~\ref{sec:RieszMVProb} we deduce
that ${\bf B}^{\ast} {\bf B} \in
\cL(\ell_2(\nabla^{X}),\ell_2(\nabla^{X}))$ is boundedly invertible,
self-adjoint positive definite, with
\begin{align*}
\|{\bf B}^{\ast} {\bf B}\|_{\cL(\ell_2(\nabla^{X}), \ell_2(\nabla^{X}))} 
&\leq 
\|{\bf B}\|_{\cL(\ell_2(\nabla^{X}), \ell_2(\nabla^{Y}))}^2
\;,\\ 
\|({\bf B}^{\ast} {\bf B})^{-1}
\|_{\cL(\ell_2(\nabla^{X}), \ell_2(\nabla^{X}))} 
&\leq 
\|{\bf B}^{-1}\|_{\cL(\ell_2(\nabla^{Y}), \ell_2(\nabla^{X}))}^2
\;.
\end{align*}
Now let ${\bf u} \in \cA^s_{\infty}(\ell_2(\nabla^{X}))$, and assume
that for some $s^{\ast}>s$, both ${\bf B}$ and ${\bf B}^{\ast}$ are
$s^{\ast}$-admissible.  By Corollary~\ref{corol}, with ${\bf B}^*$ in
place of ${\bf C}$, a valid ${\bf RHS}_{{\bf B}^{\ast} {\bf f}}$
routine is given by \eqref{113}, and ${\bf B}^{\ast} {\bf B}$ is
$s^{\ast}$-admissible with a valid ${\bf APPLY}_{{\bf B}^{\ast} {\bf
    B}}$ routine given by \eqref{eq:applyCapplyB}.  A combination of
Theorem~\ref{thopt} and Corollary~\ref{corol} yields the following
result.

\begin{theorem} \label{thoptnormal}
For any $\eps>0$, the adaptive wavelet methods from 
\cite{CDD2} 
applied to the normal equations \eqref{eq:normal} 
using above ${\bf APPLY}_{{\bf B}^{\ast} {\bf B}}$ and ${\bf
  RHS}_{{\bf B}^{\ast} {\bf f}}$ 
routines produce approximations ${\bf u}_{\eps}$ to ${\bf u}$ 
which satisfy $\|{\bf u}-{\bf u}_{\eps}\|_{\ell_2(\nabla^{X})} \leq \eps$. 
If for some $s>0$, ${\bf u} \in \cA^s_{\infty}(\ell_2(\nabla^{X}))$, 
then $\# {\rm supp}\,{\bf u}_{\eps} 
\lesssim \eps^{-1/s}\|{\bf u}\|_{\cA^s_{\infty}(\ell_2(\nabla^{X}))}^{1/s}$, 
with constant
only dependent on $s$ when it tends to $0$ or $\infty$, 
and on $\|{\bf B}\|$ and $\|{\bf B}^{-1}\|$ when they tend to infinity.

If $s < s^{\ast}$, then
the number of arithmetic operations and storage locations 
required by a call of either of these adaptive wavelet methods 
with tolerance $\eps>0$ is bounded by some multiple of
$$
1+\eps^{-1/s}(1+a_{{\bf B},s}(1+a_{{\bf B}^{\ast},s}))\|{\bf
  u}\|_{\cA^s_{\infty}(\ell_2(\nabla^{X}))}^{1/s} 
$$
where this multiple only depends on $s$ when it tends to $0$ or $\infty$, 
and on $\|{\bf B}\|$ and  $\|{\bf B}^{-1}\|$ when they tend to infinity.
\end{theorem}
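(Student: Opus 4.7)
The proof is essentially a bookkeeping exercise that packages together Theorem~\ref{thopt} (optimality of the adaptive wavelet method for self-adjoint positive definite systems) with Corollary~\ref{corol} (composability of $s^*$-admissible operators and of the corresponding $\mathbf{APPLY}$ and $\mathbf{RHS}$ routines). The plan is as follows.

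First, I would verify that the normal equations operator $\mathbf{B}^*\mathbf{B} \in \mathcal{L}(\ell_2(\nabla^X),\ell_2(\nabla^X))$ fits the hypotheses of Theorem~\ref{thopt}. Self-adjointness and positivity are immediate, and the norm bounds displayed just before the theorem statement show that $\mathbf{B}^*\mathbf{B}$ and its inverse are bounded (using Proposition~\ref{prop:xtinfsup} and the Riesz basis property from Section~\ref{sec:RieszMVProb}). The unique solution of \eqref{eq:normal} coincides with the unique solution $\mathbf{u}$ of \eqref{mv}, so producing an $\ell_2(\nabla^X)$-accurate approximation of one gives the same accuracy for the other.

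Second, I would assemble the required routines. By hypothesis, $\mathbf{B}$ and $\mathbf{B}^*$ are both $s^*$-admissible. Applying the first part of Corollary~\ref{corol} with $\mathbf{C} = \mathbf{B}^*$ yields $s^*$-admissibility of $\mathbf{B}^*\mathbf{B}$ together with the valid routine $\mathbf{APPLY}_{\mathbf{B}^*\mathbf{B}}$ defined in \eqref{eq:applyCapplyB}, and the admissibility constant estimate
\begin{align*}
a_{\mathbf{B}^*\mathbf{B},\bar{s}}
\lesssim a_{\mathbf{B},\bar{s}}\bigl(\|\mathbf{B}^*\|^{1/\bar{s}} + a_{\mathbf{B}^*,\bar{s}}\bigr)
\quad \forall \bar{s}\in(0,s^*).
\end{align*}
Similarly, since $\mathbf{RHS}_{\mathbf{f}}$ is available by assumption, the second part of Corollary~\ref{corol} supplies a valid routine $\mathbf{RHS}_{\mathbf{B}^*\mathbf{f}}$ through \eqref{113} with the analogous cost bound.

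Third, I would invoke Theorem~\ref{thopt} directly on the system \eqref{eq:normal} with these routines. This gives the accuracy guarantee $\|\mathbf{u}-\mathbf{u}_\epsilon\|_{\ell_2(\nabla^X)}\le \epsilon$ and, under $\mathbf{u}\in \mathcal{A}^s_\infty(\ell_2(\nabla^X))$, the support bound $\#\mathrm{supp}\,\mathbf{u}_\epsilon \lesssim \epsilon^{-1/s}\|\mathbf{u}\|_{\mathcal{A}^s_\infty}^{1/s}$, with dependence only on $s$, $\|\mathbf{B}^*\mathbf{B}\|$, and $\|(\mathbf{B}^*\mathbf{B})^{-1}\|$, the latter two being controlled by $\|\mathbf{B}\|$ and $\|\mathbf{B}^{-1}\|$ as displayed. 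The complexity bound from Theorem~\ref{thopt} reads $\epsilon^{-1/s}(1+a_{\mathbf{B}^*\mathbf{B},s})\|\mathbf{u}\|_{\mathcal{A}^s_\infty}^{1/s}+1$, and substituting the estimate for $a_{\mathbf{B}^*\mathbf{B},s}$ from Corollary~\ref{corol} yields the claimed complexity, with the $\|\mathbf{B}^*\|^{1/s}$ factor absorbed into the multiplicative constant that already depends on $\|\mathbf{B}\|$.

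I do not foresee any serious obstacle, since the ingredients have been carefully staged by Corollary~\ref{corol} and Theorem~\ref{thopt}; the only point that deserves attention is the book-keeping of constants, in particular making clear that the dependence on $\|\mathbf{B}\|$ and $\|\mathbf{B}^{-1}\|$ enters through $\|\mathbf{B}^*\mathbf{B}\|$ and $\|(\mathbf{B}^*\mathbf{B})^{-1}\|$, and that the admissibility cost of the composed operator splits as $a_{\mathbf{B},s}(1+a_{\mathbf{B}^*,s})$ once the $\|\mathbf{B}^*\|^{1/s}$ term is absorbed.
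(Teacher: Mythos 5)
Your argument is essentially the paper's own: the preceding paragraph sets up exactly this combination of Corollary~\ref{corol} (with ${\bf C}={\bf B}^*$) to obtain valid $\mathbf{APPLY}_{\mathbf{B}^*\mathbf{B}}$ and $\mathbf{RHS}_{\mathbf{B}^*\mathbf{f}}$ routines together with $s^*$-admissibility of ${\bf B}^*{\bf B}$, followed by an appeal to Theorem~\ref{thopt} for the self-adjoint positive definite normal-equations system. Your bookkeeping of the admissibility constant, including the absorption of the $\|{\bf B}^*\|^{1/s}$ factor into the $\|{\bf B}\|$-dependent multiplicative constant, correctly reproduces the stated complexity bound.
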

\subsection{$s^{\ast}$-computability of ${\bf B}$ in \eqref{eq:GalMat}}
\label{sec:S1}
We apply the general concepts to the space-time variational
formulation \eqref{eq:ParaWeak} and the space-time tensor-product
wavelet bases 
$\Psi^X = \Theta^X\otimes \Sigma$
and 
$\Psi^Y = \Theta^Y \otimes \Sigma$ 
in Proposition~\ref{prop:RieszInX=Y}.  

Due to the discussion in Section~\ref{sec:XYchoice},
it suffices to show $s^{\ast}$-admissibility of both,
${\bf B}$ and ${\bf B}^{\ast}$,
for $s^{\ast} > s_{\mathrm{max}}$ with $s_{\mathrm{max}}$ as defined in \eqref{eq:sast}.
The bi-infinite matrix ${\bf B}$ defined in \eqref{eq:GalMat} comprises
of a sum of tensor products of bi-infinite matrices, each factor matrix
corresponding to either the 
Gram matrices 
$\langle \Theta_> , \Theta_> \rangle_{L^2(\IR_>)}$
or 
$ \langle \Sigma, \Sigma \rangle_{L^2(D)}$ 
or of the ``stiffness'' matrices 
$a([\Sigma]_V, [\Sigma]_V)$ 
with respect to the Riesz bases $\Theta_>$ and $\Sigma$ (cp.\ Section
\ref{sec:RieszInXY}). 

To apply the general theory of adaptive wavelet discretizations
of \cite{CDD1,CDD2,Stev09Surv}, the key step is the verification
of $s^*$-compressibility and of $s^*$-computability 
of the matrix ${\bf B}$ in  \eqref{eq:GalMat}.

We verify $s^{\ast}$-computability of
${\bf B}$ in \eqref{eq:GalMat} with the following
result \cite[Prop.~8.1]{ScSt09}.
\begin{proposition} \label{prop:ProdCptable}
Let for some $s^{\ast}>0$, ${\bf D}$, ${\bf E}$ be $s^{\ast}$-computable.
Then
\begin{enumerate}
\renewcommand{\theenumi}{\alph{enumi}}
\renewcommand{\labelenumi}{{\upshape(\alph{enumi})}}
\item \label{a}
${\bf D} \otimes {\bf E}$ is $s^{\ast}$-computable
with computability constant satisfying, for $0<\bar{s}<\tilde{s}<s^{\ast}$,
$c_{{\bf D} \otimes {\bf E},\bar{s}} 
\lesssim 
(c_{{\bf D},\tilde{s}} c_{{\bf E},\tilde{s}})^{\tilde{s}/\bar{s}}$
and
\item \label{b} for any $\eps \in(0,s^{\ast})$, ${\bf D} \otimes {\bf
    E}$ is $(s^{\ast}-\eps)$-computable, with computability constant
  $c_{{\bf D} \otimes {\bf E},\bar{s}}$ satisfying, for
  $0<\bar{s}<s^{\ast}-\eps<\tilde{s}<s^{\ast}$, $c_{{\bf D} \otimes
    {\bf E},\bar{s}} \lesssim \max(c_{{\bf D},\tilde{s}},1)
  \max(c_{{\bf E},\tilde{s}},1)$.
\end{enumerate}
The constants implicit by $\lesssim$ 
in the bounds on the computability constants in \eqref{a} and \eqref{b}
depend only on
$\tilde{s}$, $\tilde{s} \rightarrow \infty$ 
and on $\tilde{s}-\bar{s} \downarrow 0$.
\end{proposition}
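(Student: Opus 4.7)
The plan is to construct approximations to $\mathbf{D} \otimes \mathbf{E}$ via a sparse-tensor (hyperbolic-cross) truncation of a dyadic telescoping decomposition of the two factors. Using the $s^*$-computable approximations guaranteed by hypothesis, define dyadic differences $\mathbf{D}^{(j)} := \mathbf{D}_{2^{j+1}} - \mathbf{D}_{2^{j}}$ (with $\mathbf{D}_{1} := 0$), and analogously $\mathbf{E}^{(k)}$. Each $\mathbf{D}^{(j)}$ has at most an absolute multiple of $2^{j}$ non-zero entries per column and, by a triangle inequality applied to $\mathbf{D}_{2^{j+1}} - \mathbf{D}$ and $\mathbf{D} - \mathbf{D}_{2^{j}}$, satisfies $\|\mathbf{D}^{(j)}\| \lesssim c_{\mathbf{D},\tilde s}^{\tilde s}\, 2^{-j\tilde s}$ for every $\tilde s \in (0,s^{\ast})$, and likewise for $\mathbf{E}^{(k)}$. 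The telescope $\mathbf{D} = \sum_{j\ge 0} \mathbf{D}^{(j)}$ converges in the operator norm, and the same holds for $\mathbf{E}$, so
$$
\mathbf{D}\otimes\mathbf{E} \;=\; \sum_{j,k\ge 0} \mathbf{D}^{(j)} \otimes \mathbf{E}^{(k)}.
$$

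Given a budget $N$, I would truncate this double sum to the sparse (``hyperbolic cross'') index set $\{(j,k): j+k\le L\}$ and define $(\mathbf{D}\otimes\mathbf{E})_{N} := \sum_{j+k\le L} \mathbf{D}^{(j)} \otimes \mathbf{E}^{(k)}$. The per-column non-zero count is $\sum_{m=0}^{L}(m+1)\, 2^{m+O(1)} \lesssim L\, 2^{L}$, and by submultiplicativity of the operator norm on tensor products,
$$
\bigl\| \mathbf{D}\otimes\mathbf{E} - (\mathbf{D}\otimes\mathbf{E})_{N} \bigr\|
\;\le\; \sum_{j+k>L} \|\mathbf{D}^{(j)}\|\,\|\mathbf{E}^{(k)}\|
\;\lesssim\; (c_{\mathbf{D},\tilde s}\, c_{\mathbf{E},\tilde s})^{\tilde s}\, L\, 2^{-L\tilde s}.
$$
Choosing $L$ so that $L\, 2^{L}\sim N$ gives $L\sim\log N$. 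For part \eqref{a}, substituting these bounds into $N\,\|\mathbf{D}\otimes\mathbf{E} - (\mathbf{D}\otimes\mathbf{E})_{N}\|^{1/\bar s}$ produces a factor $L^{1+1/\bar s}\, 2^{L(1-\tilde s/\bar s)}$, which is uniformly bounded in $L$ whenever $\bar s < \tilde s$ (the exponential factor beats the polynomial in $L$), leaving the multiplicative constant $(c_{\mathbf{D},\tilde s}\, c_{\mathbf{E},\tilde s})^{\tilde s/\bar s}$ as asserted. For part \eqref{b}, running the same argument with $\tilde s \in (s^{\ast}-\varepsilon, s^{\ast})$ and $\bar s < s^{\ast}-\varepsilon$ ensures a uniform positive gap $\tilde s - \bar s$ depending only on $\varepsilon$; hence $L^{1+1/\bar s}\, 2^{L(1-\tilde s/\bar s)}$ is bounded by an absolute constant, the exponent $\tilde s/\bar s$ on the factor constants becomes harmless, and using the safeguard $\max(\cdot,1)$ to handle the case of small computability constants yields the multiplicative bound $c_{\mathbf{D}\otimes\mathbf{E},\bar s} \lesssim \max(c_{\mathbf{D},\tilde s},1)\,\max(c_{\mathbf{E},\tilde s},1)$.

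The main obstacle is the computational cost bookkeeping: Definition~\ref{def:scomput} requires not only the error bound but also that all $N$ non-zero entries of $(\mathbf{D}\otimes\mathbf{E})_{N}$ be produced in $O(N)$ operations. Since the $s^{\ast}$-computability of $\mathbf{D}$ and $\mathbf{E}$ only promises joint computability of the full $N$-sparse approximant, one must reuse the nested structure of the $\mathbf{D}_{2^{j+1}}$ so that all dyadic increments $\mathbf{D}^{(j)}$, $j\le L$, are assembled within an absolute multiple of $\sum_{j\le L} 2^{j} \lesssim 2^{L}$ operations, and likewise for $\mathbf{E}^{(k)}$; the tensor blocks $\mathbf{D}^{(j)}\otimes\mathbf{E}^{(k)}$ with $j+k\le L$ then collectively use $\lesssim L\, 2^{L} \sim N$ operations, matching the standard sparse-grid cost estimate. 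Once this bookkeeping is in place, the two bounds on the computability constants follow from the single error estimate above by the two different choices of $\tilde s$ relative to $\bar s$.
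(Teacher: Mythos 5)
The paper itself offers no proof of this proposition; it is simply quoted from \cite[Prop.~8.1]{ScSt09}. Your sparse-tensor (hyperbolic-cross) truncation of the tensor product of the two dyadic telescopes is indeed the standard argument for part \eqref{a}, and that part of your write-up, including the cost bookkeeping by reuse of the nested approximants, is essentially sound. Two small caveats: the lowest telescope level requires care, since with $\mathbf{D}_{1}:=0$ you get $\|\mathbf{D}^{(0)}\|=\|\mathbf{D}_{2}\|$, which is controlled by $\|\mathbf{D}\|+c_{\mathbf{D},\tilde s}^{\tilde s}$ rather than by $c_{\mathbf{D},\tilde s}^{\tilde s}$ alone, so the implied constant in your bound in fact picks up a dependence on $\|\mathbf{D}\|,\|\mathbf{E}\|$ (this may be an imprecision inherited from the cited statement, but you should acknowledge it); and one should note that the error bound is established only for the special values $N\sim L\,2^{L}$, with general $N$ handled by taking the next admissible $L$.

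Part \eqref{b}, however, has a genuine gap. You assert that with a fixed gap $\tilde s-\bar s$ and the safeguard $\max(\cdot,1)$ ``the exponent $\tilde s/\bar s$ on the factor constants becomes harmless,'' but this is not so. Your own estimate gives $c_{\mathbf{D}\otimes\mathbf{E},\bar s}\lesssim (c_{\mathbf{D},\tilde s}\,c_{\mathbf{E},\tilde s})^{\tilde s/\bar s}$ with $\tilde s/\bar s>1$; when $c_{\mathbf{D},\tilde s},c_{\mathbf{E},\tilde s}>1$ this is \emph{strictly larger} than $\max(c_{\mathbf{D},\tilde s},1)\max(c_{\mathbf{E},\tilde s},1)=c_{\mathbf{D},\tilde s}\,c_{\mathbf{E},\tilde s}$, and the $\max$ does nothing to reduce the exponent (a rescaling $\mathbf{D}\mapsto\mathbf{D}/c_{\mathbf{D},\tilde s}^{\tilde s}$ also fails, since the normalization factor reappears with the same power $\tilde s/\bar s$). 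Also, your claim of a ``uniform positive gap depending only on $\varepsilon$'' is false as stated: the proposition allows $\bar s$ just below $s^{\ast}-\varepsilon$ and $\tilde s$ just above it, so the gap can be arbitrarily small. Part \eqref{b} is a genuinely sharper bound (paid for by reducing the computability range from $s^{\ast}$ to $s^{\ast}-\varepsilon$) and requires an argument that you have not supplied; deriving it as a corollary of your estimate for \eqref{a} does not work.
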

We recall that we work under Assumption~\ref{asmp:s*compress},
so that the 
bi-infinite mass matrix 
$\matr{M} = \langle \Sigma, \Sigma \rangle_{L^2(D)}$
and the bi-infinite stiffness matrix
$\matr{A} = a([\Sigma]_V, [\Sigma]_V)$ 
are both
$s^*$ computable and compressible under our assumptions
\eqref{s1}--\eqref{s4}. 
%
\subsection{$s^*$-computability of the fractional time derivatives}
\label{sec:sCompThMat}
Proposition~\ref{prop:ProdCptable} and 
Assumption~\ref{asmp:s*compress} reduce the analysis of
$s^{\ast}$-compressibility of ${\bf B}$ in \eqref{eq:GalMat}
to the verification of the $s^{\ast}$-compressibility
of the temporal ``stiffness'' and ``mass'' matrices
\be\label{eq:ThetMat}
\matr{D} 
:= 
\big\langle {\rD^{\frac12}_+}[\Theta^X]_{\frac12}, 
{\rD^{\frac12}_-}[\Theta^Y]_{\frac12}\big\rangle_{L^2(\IR_>)}\;,
\qquad 
\matr{G} 
:= \big\langle\Theta^X , \Theta^Y \big\rangle_{L^2(\IR_>)}
\;,
\ee
i.e., 
on the compressibility of the
``stiffness'' matrix $\matr{D}$ 
and of the ``mass''-matrix $\matr{G}$ 
of the fractional time derivative in \eqref{eq:DefB}.

We discuss $s^*$-computability of $\matr{D}$ and $\matr{G}$ in the
sense of Definition~\ref{def:scomput}.  We assume at our disposal
Riesz bases $\Theta^X$ of $H^{\frac12}_{00,\{0\}}(\IR_>)$ and
$\Theta^Y$ of $H^{\frac12}(\IR_>)$ as in Section~\ref{sec:WavR+} and,
in particular, that properties \eqref{t1}--\eqref{t4} of that section
hold for elements of either of these bases.

The $s^*$-computability of  $\matr{G}$ follows as in 
\cite[Sect.~8.2]{ScSt09} from the properties 
\eqref{t1}--\eqref{t4} of $\Theta^X$ and $\Theta^Y$.
It remains to address $s^*$-computability 
of $\matr{D}$ in \eqref{eq:ThetMat}.

To this end, we observe that by a density argument,
Lemma~\ref{lem:intbyparts} and, in particular, 
the fractional integration by parts identity 
\eqref{eq:6b} remain valid for 
$w \in H^{\frac12}_{00,\{0\}}(\IR_>)$ and for $v \in H^{\frac12}(\IR_>)$.
Since  
$\Theta^X$ is a Riesz basis of $H^{\frac12}_{00,\{0\}}(\IR_>)$
and 
$\Theta^Y$ of $H^{\frac12}(\IR_>)$, we obtain
from \eqref{eq:6b} that 
$$
\matr{D} 
= 
\big\langle {\rD^{\frac12}_+}[\Theta^X]_{\frac12}, 
{\rD^{\frac12}_-}[\Theta^Y]_{\frac12}
\big\rangle_{L^2(\IR_>)}
=
\big\langle \rD [\Theta^X]_{1}, \Theta^Y \big\rangle_{L^2(\IR_>)}
\;.
$$
Now using properties \eqref{t1}--\eqref{t4} of the 
temporal wavelet bases $\Theta^X$ and $\Theta^Y$,
we establish $s^*$-computability of $\matr{D}$ 
as in \cite[Sect.~8.2]{ScSt09}.
\subsection{Optimality}
\label{sec:Opt}
The preceding considerations can be combined into 
\begin{theorem} \label{thopthighdim}
Consider the parabolic problem \eqref{eq:ParProb}, \eqref{eq:ParIC} 
in the weak form \eqref{eq:ParaWeak} 
with spatial bilinear form as in Section~\ref{sec:FuncSpc}.
Consider its representation ${\bf B} {\bf u}={\bf f}$ using 
temporal and spatial wavelet bases 
$\Theta$ and $\Sigma$ as above.

Then for any $\eps>0$, the adaptive wavelet 
methods from \cite{CDD2} applied to the normal equations 
\eqref{eq:normal}
produce an approximation ${\bf u}_{\eps}$ with 
$$
\|u-{\bf u}_{\eps}^\top [\Theta \otimes \Sigma]\|_{X}
\eqsim
\|{\bf u}-{\bf u}_{\eps}\|
\leq \eps\;.
$$

If for some $s>0$, 
${\bf u}\in \cA^s_{\infty}(\ell_2(\nabla^{X}))$, 
then 
${\rm supp}\,{\bf u}_{\eps} 
 \lesssim \eps^{-1/s} \|{\bf u}\|_{\cA^s_{\infty}(\ell_2(\nabla^{X}))}^{1/s}$, 
with the implied constant only dependent on $s$ when it tends to $0$
or $\infty$.  

If, for arbitrary $s^{\ast}>0$, it holds $s<s^{\ast}$, then the number
of operations and storage locations required by one call of the
space-time adaptive algorithm with tolerance $\varepsilon > 0$ is
bounded by some absolute multiple of
\begin{align*} 
\eps^{-1/s} n^2 \|{\bf u}\|_{\cA^s_{\infty}(\ell_2(\nabla^{X}))}^{1/s}+1
\;.
\end{align*}
Here, the implied constant depends only on the Riesz and the admissibility 
constants of the spatial wavelet bases $\Sigma$.
\end{theorem}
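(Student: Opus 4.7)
The plan is to assemble the theorem from the three independent pillars developed in the preceding sections: (i) the well-posedness of the infinite-dimensional operator equation, (ii) the Riesz basis reformulation, and (iii) the adaptive wavelet machinery of \cite{CDD2} applied to the normal equations.

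First, I would invoke Proposition~\ref{prop:xtinfsup} to guarantee that $B = \rD + A \in \cL(X,Y^*)$ is boundedly invertible, so that \eqref{eq:ParaWeak} is equivalent to the bi-infinite matrix problem \eqref{mv} with $\mathbf{B} = \cB_{\rD+A}(\Psi^X,\Psi^Y)$ of Section~\ref{sec:RieszMVProb}. By Proposition~\ref{prop:RieszInX=Y}, $\Psi^X = \Theta^X \otimes \Sigma$ is a Riesz basis of $X$, and this at once gives the norm equivalence
\begin{equation*}
\|u - \mathbf{u}_\eps^\top [\Theta \otimes \Sigma]\|_X \eqsim \|\mathbf{u} - \mathbf{u}_\eps\|_{\ell_2(\nabla^X)},
\end{equation*}
so it suffices to control the $\ell_2$-error and support size of the iterates $\mathbf{u}_\eps$ produced when the scheme of \cite{CDD2} is applied to the normal equations~\eqref{eq:normal}.

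Second, to apply Theorem~\ref{thoptnormal} I must verify that both $\mathbf{B}$ and $\mathbf{B}^*$ are $s^*$-admissible for some $s^* > s$. By Theorem~\ref{th10} it is enough to establish $s^*$-computability. The matrix $\mathbf{B}$ of \eqref{eq:GalMat} is a sum of two tensor products, one factor of each being a spatial mass or stiffness matrix (which are $s^*$-computable by Assumption~\ref{asmp:s*compress}), and the other factor being the temporal matrices $\mathbf{D}$ and $\mathbf{G}$ of \eqref{eq:ThetMat}. The $s^*$-computability of $\mathbf{G}$ follows from \eqref{t1}--\eqref{t4} exactly as in \cite[Sect.~8.2]{ScSt09}, while the $s^*$-computability of $\mathbf{D}$ uses the fractional integration-by-parts identity \eqref{eq:6b} to rewrite $\mathbf{D} = \langle \rD[\Theta^X]_1, \Theta^Y\rangle_{L^2(\IR_>)}$, reducing matters to the same wavelet estimates for the first-order time derivative. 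Proposition~\ref{prop:ProdCptable} then lifts $s^*$-computability from the factors to $\mathbf{B}$, and the same argument applies to $\mathbf{B}^*$ by symmetry of the construction.

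Third, I would feed these admissibility properties into Theorem~\ref{thoptnormal}. This immediately yields the bound $\|\mathbf{u} - \mathbf{u}_\eps\| \le \eps$, the support estimate $\#\mathrm{supp}\,\mathbf{u}_\eps \lesssim \eps^{-1/s}\|\mathbf{u}\|_{\cA^s_\infty}^{1/s}$, and the work bound of the form $\eps^{-1/s}(1 + a_{\mathbf{B},s}(1+a_{\mathbf{B}^*,s}))\|\mathbf{u}\|_{\cA^s_\infty}^{1/s} + 1$. To obtain the stated factor $n^2$ in case~\eqref{B}, I would track the dependence of the admissibility constants on the spatial dimension through Proposition~\ref{prop:ProdCptable}\eqref{a}--\eqref{b}: each of the $n$ univariate tensor factors in $\Sigma$ contributes to $a_{\mathbf{B},s}$, and composing the $\mathbf{APPLY}$ routine for $\mathbf{B}^*\mathbf{B}$ via Corollary~\ref{corol} squares this dependence, producing the $n^2$ prefactor. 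The transition from complexity in the $\ell_2$-metric to complexity in $X$ is costless because of the Riesz basis equivalence recalled above.

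The main obstacle I foresee is controlling the dimensional dependence cleanly and verifying that the adjoint matrix $\mathbf{B}^*$ inherits the same $s^*$-computability; the former requires careful bookkeeping through the tensor-product compressibility estimates of Proposition~\ref{prop:ProdCptable}, while the latter is ultimately harmless because $\mathbf{B}^*$ again decomposes as a sum of two tensor products with factors of exactly the same type as $\mathbf{B}$ (namely, the transposed temporal and spatial matrices), so the same compressibility arguments from Sections~\ref{sec:S1} and \ref{sec:sCompThMat} apply verbatim after transposition.
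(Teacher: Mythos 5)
Your proposal is correct and follows the same synthesis the paper intends: well-posedness (Proposition~\ref{prop:xtinfsup}), the Riesz-basis reformulation (Proposition~\ref{prop:RieszInX=Y}), $s^*$-computability of $\matr{B}$ and $\matr{B}^*$ established from Assumption~\ref{asmp:s*compress}, Sections~\ref{sec:S1}--\ref{sec:sCompThMat}, Proposition~\ref{prop:ProdCptable} and Theorem~\ref{th10}, all fed into Theorem~\ref{thoptnormal}. The one point where your bookkeeping is slightly off is the origin of the dimensional factor: in case~\eqref{B} the linear-in-$n$ dependence of $a_{\matr{B},s}$ does not come from the $n$ tensor factors of $\Sigma$ ``each contributing''---the dependence on the number of tensor \emph{factors} is absorbed uniformly by Proposition~\ref{prop:ProdCptable}---but rather from the fact that the spatial stiffness matrix $\matr{A} = a([\Sigma]_V,[\Sigma]_V)$ for an operator such as $-\Delta$ on $(0,1)^n$ is a \emph{sum} of $n$ tensor-product matrices, so the ${\bf APPLY}$ routine for $\matr{B}$ performs $n$ such calls and its admissibility constant scales linearly in $n$. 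The quadratic jump to $n^2$ coming from composing ${\bf APPLY}_{\matr{B}^*}$ with ${\bf APPLY}_{\matr{B}}$ for the normal equations via Corollary~\ref{corol} is exactly as you describe.
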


\section{Acknowledgement}
\label{sec:Ack}
This work was initiated during a visit of S.L. at
the research institute for mathematics (FIM) of ETH,
and continued during a visit of C.S. at Chalmers
and during workshops at the 
Mathematical Research Institute Oberwolfach.
\bibliographystyle{plain}
\bibliography{Para}

\def\cprime{$'$}
\begin{thebibliography}{10}

\bibitem{Andreev13}
Roman Andreev.
\newblock Stability of sparse space-time finite element discretizations of
  linear parabolic evolution equations.
\newblock {\em IMA J. Numer. Anal.}, 33(1):242--260, 2013.

\bibitem{BaiocchiBrezzi}
Claudio Baiocchi and Franco Brezzi.
\newblock Optimal error estimates for linear parabolic problems under minimal
  regularity assumptions.
\newblock {\em Calcolo}, 20(2):143--176 (1984), 1983.

\bibitem{BL}
J{\"o}ran Bergh and J{\"o}rgen L{\"o}fstr{\"o}m.
\newblock {\em Interpolation {Spaces}. {A}n {Introduction}}.
\newblock Springer-Verlag, Berlin, 1976.
\newblock Grundlehren der Mathematischen Wissenschaften, No. 223.

\bibitem{ChegSt11}
Nabi Chegini and Rob Stevenson.
\newblock Adaptive wavelet schemes for parabolic problems: sparse matrices and
  numerical results.
\newblock {\em SIAM J. Numer. Anal.}, 49(1):182--212, 2011.

\bibitem{CDD1}
Albert Cohen, Wolfgang Dahmen, and Ronald DeVore.
\newblock Adaptive wavelet methods for elliptic operator equations: convergence
  rates.
\newblock {\em Math. Comp.}, 70(233):27--75, 2001.

\bibitem{CDD2}
Albert Cohen, Wolfgang Dahmen, and Ronald DeVore.
\newblock Adaptive wavelet methods. {II}. {B}eyond the elliptic case.
\newblock {\em Found. Comput. Math.}, 2(3):203--245, 2002.

\bibitem{DFR07}
Stephan Dahlke, Thorsten Raasch, Manuel Werner, Massimo Fornasier, and Rob
  Stevenson.
\newblock Adaptive frame methods for elliptic operator equations: the steepest
  descent approach.
\newblock {\em IMA J. Numer. Anal.}, 27(4):717--740, 2007.

\bibitem{DKU99}
Wolfgang Dahmen, Angela Kunoth, and Karsten Urban.
\newblock Biorthogonal spline wavelets on the interval---stability and moment
  conditions.
\newblock {\em Appl. Comput. Harmon. Anal.}, 6(2):132--196, 1999.

\bibitem{DijCSSt09}
Tammo~Jan Dijkema, Christoph Schwab, and Rob Stevenson.
\newblock An adaptive wavelet method for solving high-dimensional elliptic
  {PDE}s.
\newblock {\em Constr. Approx.}, 30(3):423--455, 2009.

\bibitem{FDiss}
Magnus Fontes.
\newblock {P}arabolic {E}quations with {L}ow {R}egularity.
\newblock Doctoral {D}issertation, Department of Mathematics, University of
  Lund, Sweden, 1996.

\bibitem{F00}
Magnus Fontes.
\newblock A monotone operator method for elliptic-parabolic equations.
\newblock {\em Comm. Partial Differential Equations}, 25(3-4):681--702, 2000.

\bibitem{Fon09}
Magnus Fontes.
\newblock Initial-boundary value problems for parabolic equations.
\newblock {\em Ann. Acad. Sci. Fenn. Math.}, 34(2):583--605, 2009.

\bibitem{GHS07}
T.~Gantumur, H.~Harbrecht, and R.~Stevenson.
\newblock An optimal adaptive wavelet method without coarsening of the
  iterands.
\newblock {\em Math. Comp.}, 76(258):615--629, 2007.

\bibitem{GH13}
Michael Griebel and Helmut Harbrecht.
\newblock On the construction of sparse tensor product spaces.
\newblock {\em Math. Comp.}, 82(282):975--994, 2013.

\bibitem{GO95}
Michael Griebel and Peter Oswald.
\newblock On the abstract theory of additive and multiplicative {S}chwarz
  algorithms.
\newblock {\em Numer. Math.}, 70(2):163--180, 1995.

\bibitem{Grisvard85}
Pierre Grisvard.
\newblock {\em Elliptic {Problems} in {Nonsmooth} {Domains}}, volume~24 of {\em
  Monographs and Studies in Mathematics}.
\newblock Pitman (Advanced Publishing Program), Boston, MA, 1985.

\bibitem{LangWolf13}
U.~Langer and M.~Wolfmayr.
\newblock Multiharmonic finite element analysis of a time-periodic parabolic
  optimal control problem.
\newblock {\em J. Numer. Math.}, 21(4):265--300, 2013.

\bibitem{LM1}
J.-L. Lions and E.~Magenes.
\newblock {\em Probl\`emes aux limites non homog\`enes et applications. {V}ol.
  1}.
\newblock Travaux et Recherches Math\'ematiques, No. 17. Dunod, Paris, 1968.

\bibitem{SamkoEtAl93}
Stefan~G. Samko, Anatoly~A. Kilbas, and Oleg~I. Marichev.
\newblock {\em Fractional {Integrals} and {Derivatives}}.
\newblock Gordon and Breach Science Publishers, Yverdon, 1993.
\newblock Theory and applications, Edited and with a foreword by S. M.
  Nikol{\cprime}ski{\u\i}, Translated from the 1987 Russian original, Revised
  by the authors.

\bibitem{ScSt09}
Christoph Schwab and Rob Stevenson.
\newblock Space-time adaptive wavelet methods for parabolic evolution problems.
\newblock {\em Math. Comp.}, 78(267):1293--1318, 2009.

\bibitem{Stev09Surv}
Rob Stevenson.
\newblock Adaptive wavelet methods for solving operator equations: an overview.
\newblock In {\em Multiscale, {Nonlinear} and {Adaptive} {Approximation}},
  pages 543--597. Springer, Berlin, 2009.

\bibitem{StevDiv0}
Rob Stevenson.
\newblock Divergence-free wavelet bases on the hypercube: general boundary
  conditions.
\newblock {\em Preprint Erwin Schr\"odinger Inst.}, 13, 2013.

\bibitem{Thomee}
Vidar Thom{\'e}e.
\newblock {\em Galerkin {Finite} {Element} {Methods} for {Parabolic}
  {Problems}}, volume~25 of {\em Springer Series in Computational Mathematics}.
\newblock Springer-Verlag, Berlin, 1997.

\bibitem{Todor09}
Radu-Alexandru Todor.
\newblock A new approach to energy-based sparse finite-element spaces.
\newblock {\em IMA J. Numer. Anal.}, 29(1):72--85, 2009.

\bibitem{Triebel}
Hans Triebel.
\newblock {\em Interpolation {Theory}, {Function} {Spaces}, {Differential}
  {Operators}}.
\newblock Johann Ambrosius Barth, Heidelberg, second edition, 1995.

\bibitem{Urb09}
Karsten Urban.
\newblock {\em Wavelet {Methods} for {Elliptic} {Partial} {Differential}
  {Equations}}.
\newblock Numerical Mathematics and Scientific Computation. Oxford University
  Press, Oxford, 2009.

\bibitem{XuZikatanov03}
Jinchao Xu and Ludmil Zikatanov.
\newblock Some observations on {B}abu\v ska and {B}rezzi theories.
\newblock {\em Numer. Math.}, 94(1):195--202, 2003.

\end{thebibliography}
\end{document}